\documentclass[a4paper, 11pt, nosumlimits, twoside] {amsart}
\title[Lazard Isomorphism]{A comparison of locally analytic group cohomology and Lie algebra cohomology for $p$-adic Lie groups}
\author{Sabine Lechner}
%\address{Daniel Lohmann\\
%Albert-Ludwigs-Universit\"at Freiburg\\
%Mathematisches Institut\\
%Eckerstrasse 1\\
%D-79104 Freiburg }
%\email{\href{mailto:lohmann.daniel@arcor.de}{lohmann.daniel@arcor.de}}

\thanks{These results are part of my Ph.D thesis and I would like to thank my adviser Annette Huber-Klawitter for her supervision and her inspiring support. The thesis was made possible by grants of DFG Forschergruppe "Algebraische Zykel und L-Funktionen" in Freiburg. }

\usepackage{amsmath}
\usepackage{mathrsfs}  
\usepackage{amssymb}
\usepackage{amsfonts}
\usepackage{amsthm}
\usepackage[latin1]{inputenc}
\usepackage{graphicx}
\usepackage[T1]{fontenc} 
\usepackage[arrow,curve,matrix]{xy}
%\usepackage{verbatim} 
%\usepackage{hyperref}
%\usepackage{tikz}

%\definecolor{darkgreen}{rgb}{0,.4,0}
%\definecolor{darkblue}{rgb}{0,0,.5} %Einstellung fÃ¼r interne Links
%\hypersetup{pdftex=true, colorlinks=true, breaklinks=true, linkcolor=darkblue, menucolor=darkblue, pagecolor=darkblue, urlcolor=darkblue, citecolor=darkgreen}
%\binoppenalty=10000 \relpenalty=10000

%Nummerierung Ã¤ndern

\numberwithin{subsection}{section}
\sloppy

%%%%%%%%%%%%%%%%%%%%%%%%%%%%%%%%%%%%%%%%%%%%%%%%%%%%%%%%%%%%%%%%%%%%%%%%%%
\DeclareMathOperator{\sgn}{sgn}
\DeclareMathOperator{\Der}{Der}

\DeclareMathOperator{\Hom}{Hom}

\DeclareMathOperator{\Ind}{Ind}

\DeclareMathOperator{\Z}{\mathbb{Z}}
\DeclareMathOperator{\Q}{\mathbb{Q}}
\DeclareMathOperator{\Gc}{\mathcal{G}}
\DeclareMathOperator{\Hc}{\mathcal{H}}
\DeclareMathOperator{\Gch}{\mathcal{G}_h}
\DeclareMathOperator{\g}{\mathfrak{g}}
\DeclareMathOperator{\mal}{\bullet}
\DeclareMathOperator*{\OG}{\mathcal{O}(\textit{G})}
\DeclareMathOperator*{\OGK}{\mathcal{O}(\textit{G}_{\textit{K}})}
\DeclareMathOperator*{\tOG}{\tilde{\mathcal{O}}(\textit{G})}
\DeclareMathOperator*{\tOGK}{\tilde{\mathcal{O}}(\textit{G}_{\textit{K}})}
\DeclareMathOperator*{\OGe}{\mathcal{O}^{la}(\mathcal{G}(0))_e}

\DeclareMathOperator*{\Ug}{\mathcal{U}(\mathfrak{g})}
\DeclareMathOperator*{\U*}{\mathcal{U}^\star}

\DeclareMathOperator*{\D}{\mathcal{D}}
\DeclareMathOperator*{\als}{\beta^\star}
\DeclareMathOperator*{\tals}{\tilde{\beta}^\star}
\DeclareMathOperator*{\tphi}{\tilde{\phi}}
\DeclareMathOperator*{\tK}{\tilde{K}}
\DeclareMathOperator*{\al}{\beta}
\DeclareMathOperator*{\V}{V(\mathfrak{g})}

%Definitionen fÃ¼r neue Zeichen hier ein Beispiel:

\newcommand{\bes}{\begin{eqnarray*}}
\newcommand{\ees}{\end{eqnarray*}}
\newcommand{\be}{\begin{eqnarray}}
\newcommand{\ee}{\end{eqnarray}}

\mathchardef\ordinarycolon\mathcode`\:				%setzt im Mathemode ":=" symmetrisch
  \mathcode`\:=\string"8000
  \begingroup \catcode`\:=\active
    \gdef:{\mathrel{\mathop\ordinarycolon}}
  \endgroup

% % % % % % % % % % % % % % % % % % % % % % % % % %

%Verschiedene Layouts ("Styles") fÃ¼r Mathematikumgebungen werden definiert

\newtheoremstyle{note}% name
  {}% Space above
  {}% Space below
  {}% Body font
  {}% Indent amount (empty = no indent, \parindent = para indent)
  {\bfseries}% Thm head font
  {.}% Punctuation after thm head
  { }% Space after thm head: " " = normal interword space
  	 % \newline = linebreak
  {}% Thm head spec (can be left empty, meaning `normal')

\newtheoremstyle{notes}% name
  {}% Space above
  {}% Space below
  {\itshape}% Body font
  {}% Indent amount (empty = no indent, \parindent = para indent)
  {\bfseries}% Thm head font
  {.}% Punctuation after thm head
  { }% Space after thm head: " " = normal interword space
  	 % \newline = linebreak
  {}% Thm head spec (can be left empty, meaning `normal')

\newtheoremstyle{numberlessthm}{}{}{\itshape}{}{\bfseries}{.}{ }{\thmnote{#3}}
\newtheoremstyle{numberlessnote}{}{}{}{}{}{}{ }{\thmnote{#3}}
%\newtheoremstyle{missingthm}{}{}{}{}{\bfseries}{.}{ }{}

%neue Umgebungen werden definiert, denen auch jeweils ein spezifisches Layout zugewiesen wird

%Syntax:		\newtheorem{Name}[ZÃ¤hlung]{Bezeichnung}[Gliederung]
%Parameter:		Name			ein neuer Umgebungsname
%				ZÃ¤hlung			(opional) ein Name einer bereits definierten Theoremumgebung
%				Bezeichnunge	ein Text
%				Gliederung		(optional) ein Name eines GliederungszÃ¤hlers

\theoremstyle{notes}					%alle folgenden Umgebungen in diesem Layout

\newtheorem{theorem}{Theorem}[section]
\newtheorem{prop}[theorem]{Proposition}
\newtheorem{lemma}[theorem]{Lemma}
\newtheorem{thm}[theorem]{Theorem}

\newtheorem{cor}[theorem]{Corollary}

\theoremstyle{note}						%alle folgenden Umgebungen in diesem Layout

\newtheorem{defn}[theorem]{Definition}
\newtheorem{defnprop}[theorem]{Definition and Proposition}

\newtheorem{ex}[theorem]{Example}

\newtheorem{rem}[theorem]{Remark.}

\newtheorem{notation}[theorem]{Notation.}

\theoremstyle{numberlessthm}			%alle folgenden Umgebungen in diesem Layout

\theoremstyle{numberlessnote}			%alle folgenden Umgebungen in diesem Layout

% % % % % % % % % % %

%weitere Pakete
%\usepackage{pdfsync}           %Synchronisation mit pdf
%\usepackage{array}		       %Benutzung der "Tabular" Umgebung fÃ¼r Tabellen
%\usepackage{graphicx}	       %Erlaubt das Einbinden von Grafiken
%\usepackage{color}		       %farbige Schrift mÃ¶glich
%\usepackage[dvips]{rotating}   %Erlaubt Drehen von Symbolen und Text
%\setcounter{tocdepth}{1}
\begin{document}

\begin{abstract}
The main result of this work is a new proof and generalization of Lazard's comparison theorem of locally analytic group cohomology with Lie algebra cohomology for $K$-Lie groups, where $K$ is a finite extension of $\Q_p$.
\end{abstract}
\maketitle

\tableofcontents

\section{Introduction}
Lazard's comparison theorem was one of the main results in his work on $p$-adic groups \cite{lazard}. It relates locally analytic group cohomology with Lie algebra cohomology for $\Q_p$-Lie groups in two steps. First Lazard worked out an isomorphism between locally analytic group cohomology and continuous group cohomology and secondly between continuous group cohomology and Lie algebra cohomology. The latter is obtained from a difficult isomorphism between the saturated group ring and the saturated universal enveloping algebra.

Huber and Kings showed in \cite{HK} that one can directly define a map from locally analytic group cohomology to Lie algebra cohomology by differenting cochains and that in the case of smooth algebraic group schemes $H$ over $\Z_p$ with formal group $\mathcal{H} \subset H(\Z_p)$ the resulting map $$ \Phi: H^n_{la}(\mathcal{H}, \Q_p) \rightarrow H^n(\mathfrak{h}, \Q_p)$$ coincides, after identifying continuous group cohomology with locally analytic group cohomology, with Lazard's comparison isomorphism (\cite[Theorem 4.7.1]{HK}). Serre mentioned to the aforementioned authors that this was clear to him at the time Lazard's paper was written, however it was not included in the published results. In their joint work with N. Naumann in \cite{HKN} they extended the comparison isomorphism for $K$-Lie groups attached to smooth group schemes with connected generic fibre over the integers of $K$ (\cite[Theorem 4.3.1]{HKN}). The aim of this thesis is to use this simpler map to obtain an independent proof of Lazard's result.

The context in which Huber and Kings worked out the new description of the Lazard isomorphism is the construction of a $p$-adic regulator map in complete analogy to Borel's regulator for the infinite prime. The van Est isomorphism between relative Lie algebra cohomology and continuous group cohomology is replaced by the Lazard isomorphism. Their aim is to use this construction of a $p$-adic regulator for attacking the Bloch-Kato conjecture for special values of Dedekind Zeta functions.

Our strategy to prove the comparison isomorphism between locally analytic group cohomology and Lie algebra cohomology is to trace it back to the case of a formal group law $G$. Hence the first step is to obtain an isomorphism of formal group cohomology with Lie algebra cohomology (Corollary \ref{cor1})
$$\tilde{\Phi}: H^n(\tilde{G}, R) \rightarrow H^n(\g,R),$$ where $R$ is an integral domain of characteristic zero. The tilde over $\Phi$ and over the formal group law $G$ indicate that one has to modify the formal group cohomology while working with coefficients in $R$. This means that if the ring of functions to $G$, called $\OG$, is given by a formal power series ring over $R$ one has to allow certain denominators. However, we will prove in Lemma \ref{convlemma} that functions of this modified ring of functions $\tOG$ still converge, with the same region of convergence as the exponential function. \\
Let $\g$ be the Lie algebra associated to the formal group law $G$ and $\Ug$ its universal enveloping algebra. Then an essential ingredient to the first step is a morphism of complete Hopf algebras (Proposition \ref{als}) $$\als: \OG \rightarrow \Ug$$ from the ring of functions to the universal enveloping algebra. We show in Proposition \ref{gen} that this morphism is an isomorphism if we consider the modified ring of functions $\tOG$. Furthermore, we prove in Theorem \ref{phi} that this isomorphism extends to a quasi-isomorphism of the corresponding complexes and hence to the above isomorphism $\tilde{\Phi}$. \\
The second step in the proof of the Main Theorem is a Comparison Theorem for standard $K$-Lie groups. These standard groups are $K$-Lie groups associated to a formal group law $G$, see Definition \ref{g1}. 

\textbf{Comparison Theorem for standard groups} [Thm.\ref{thmstd}]. \textit{Let $G$ be a formal group law over $R$ and let $\Gc(h)$ be the $m$-standard group of level $h$ to $G$ with Lie algebra $\g \otimes_R K$. Then the map \begin{align*} \Phi_s: H^n_{la}(\Gc(h), K) \rightarrow H^n(\g, K) \end{align*} given by the continuous extension of \begin{align*} f_1 \otimes \cdots \otimes f_n \mapsto df_1\wedge \cdots \wedge df_n, \end{align*}
for $n\geq 1$ and by the identity for $n=0$ is an isomorphism for all $h~>~h_0=~\frac{1}{p-1}$.} 

The Main Theorem can then be deduced from the Comparison Theorem for standard $K$-Lie groups, since every $K$-Lie group contains an open subgroup, which is standard, see Lemma \ref{subgr}. Our approach to the proof of the Comparion Theorem for standard $K$-Lie groups is as follows. In a first step, we will show using the isomorphism $\tilde{\Phi}$ that the limit morphism $$\Phi_\infty: H^n(\mathcal{O}^{la}(\mathcal{G}(0)^\bullet)_e, K) \rightarrow H^n(\mathfrak{g}, K),$$ associated to the ring of germs of locally analytic functions in $e$, denoted by $\mathcal{O}^{la}(\Gc(0))_e$, is an isomorphism. Then injectivity of $\Phi_s$ follows from a spectral sequence argument. The proof of this injectivity part will be analogous to the proof of Theorem 4.3.1 in \cite{HKN}, however independent of the work of Lazard \cite{lazard}. For surjectivity we will again use the isomorphism statement for formal group cohomology of Corollary \ref{cor1} in addition to the aforementioned fact that functions of $\tOG$ still converge.

The article is organized as follows: In Section 2 we give a number of definitions and well-known facts concerning formal groups, Lie algebras, Hopf algebra structures and cohomology complexes. Section 3 deals, firstly, with the existence of a morphism of complete Hopf algebras between the ring of functions of a formal group law and the dual of the universal enveloping algebra. Secondly we consider the cases where this morphism is an isomorphism and thirdly, considering the modified ring of functions, we can prove that the morphism is in this modified case actually an isomorphism of complete Hopf algebras. In Section 4 we show that the isomorphism of Section 3 can be extended to a quasi-isomorphism of the corresponding complexes and we will give an explicit description, which will be identical to the description of the comparison map in \cite{HKN} and hence to Lazard's map. The last Section 5 gives the proof of the Main Theorem. We will begin by fixing some notation in Subsection 5.1, in order to formulate the Comparison Theorem for standard groups. The proof of this theorem will be given in the remaining two subsections.

\section{Preliminaries and Notation}
The main objects we are dealing with are introduced in this section. We denote by $R$ an integral domain of characteristic zero. 
\subsection{Formal group law, Lie algebra and Universal enveloping algebra} \label{sec1}

\begin{defn} \cite[9.1 Definitions]{haze}\label{law} 
Let $\textbf{X}=(X_1, \ldots, X_m)$ and $\textbf{Y}=(Y_1,\ldots, Y_m)$ be two sets of $m$ variables. An \textit{$m$-dimensional formal group law over $R$} is an $m$-tuple of power series $$G(\textbf{X},\textbf{Y})=(G^{(1)}(\textbf{X},\textbf{Y}), \ldots , G^{(m)}(\textbf{X},\textbf{Y}))$$ with $G^{(j)}(\textbf{X},\textbf{Y}) \in R[[\textbf{X},\textbf{Y}]]$ such that for all $j=1,\ldots , m$
\begin{itemize}
\item[(1)] $G^{(j)}(\textbf{X},\textbf{Y}) = X_j+ Y_j+ \sum_{l,k=1}^{m} \gamma_{lk}^j X_lY_k + O(d \geq 3), \gamma_{lk}^j \in R$
\item[(2)] $G^{(j)}(G(\textbf{X},\textbf{Y}),\textbf{Z}) = G^{(j)}(\textbf{X},G(\textbf{Y},\textbf{Z}))$,
\end{itemize}
where the notation $O(d \geq n)$ stands for a formal power series whose homogeneous parts vanish in degree strictly less than $n$. If in addition $$G^{(j)}(\textbf{X},\textbf{Y})=G^{(j)}(\textbf{Y},\textbf{X})$$ holds for all $j=\{1,\ldots , m\}$, then the formal group law is called commutative.
The \textit{ring of functions} $\OG$ to a formal group law $G$ is the ring of formal power series in $m$ variables $t^{(1)}, \ldots, t^{(m)}$ over $R$, i.e. $\mathcal{O}(G)=R[t^{(1)}, \ldots, t^{(m)}]$.\end{defn}

\begin{prop} \cite[Appendix A.4.5]{haze} Let $G(\textbf{X},\textbf{Y})$ be an $m$-dimensional formal group law over $R$. Then there exists a power series $s(\textbf{X})$ such that $G(\textbf{X}, s(\textbf{X}))=0.$ \end{prop}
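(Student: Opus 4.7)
The plan is to construct $s(\textbf{X})=(s^{(1)}(\textbf{X}),\ldots,s^{(m)}(\textbf{X}))\in R[[\textbf{X}]]^m$ by successive approximation, building it up homogeneous degree by homogeneous degree. Write $s^{(j)}=\sum_{n\geq 1} s_n^{(j)}$, where each $s_n^{(j)}$ is homogeneous of degree $n$ in $X_1,\ldots,X_m$, and let $\sigma_N^{(j)}:=\sum_{n=1}^{N} s_n^{(j)}$. The goal of the induction is to arrange that
\begin{equation*}
G^{(j)}(\textbf{X},\sigma_N(\textbf{X}))\equiv 0\pmod{(\textbf{X})^{N+1}}\qquad\text{for all }j.
\end{equation*}
Taking the inverse limit then yields a power series $s(\textbf{X})$ with $G(\textbf{X},s(\textbf{X}))=0$.

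For the base case $N=1$, condition (1) of Definition \ref{law} forces $s_1^{(j)}(\textbf{X})=-X_j$, since
\begin{equation*}
G^{(j)}(\textbf{X},-\textbf{X})=X_j+(-X_j)+\sum_{l,k}\gamma_{lk}^j X_l(-X_k)+O(d\geq 3)\equiv 0\pmod{(\textbf{X})^2}.
\end{equation*}
For the inductive step, suppose $\sigma_{N-1}$ has been constructed. The key observation is that if I substitute $\textbf{Y}=\sigma_{N-1}(\textbf{X})+s_N(\textbf{X})$ into $G^{(j)}(\textbf{X},\textbf{Y})$, then every monomial in $G^{(j)}$ other than the linear monomial $Y_j$ is either independent of $\textbf{Y}$ or has total degree at least two with at least one factor from $\textbf{X}$ or from another $\textbf{Y}$-coordinate. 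Since $s_N(\textbf{X})$ has degree $N$ in $\textbf{X}$, any such contribution involving $s_N$ has $\textbf{X}$-degree at least $N+1$. Therefore the degree-$N$ homogeneous part of $G^{(j)}(\textbf{X},\sigma_N(\textbf{X}))$ equals
\begin{equation*}
\bigl[\text{degree-}N\text{ part of }G^{(j)}(\textbf{X},\sigma_{N-1}(\textbf{X}))\bigr]+s_N^{(j)}(\textbf{X}).
\end{equation*}
Defining $s_N^{(j)}(\textbf{X})$ to be the negative of the bracketed term (which lies in $R[\textbf{X}]$ and is homogeneous of degree $N$ by the inductive hypothesis) makes the degree-$N$ part vanish, while the lower degree parts are already zero by induction. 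This completes the construction.

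The construction requires only condition (1) of Definition \ref{law}, not associativity. The main point to verify carefully is the degree-tracking claim above: that $s_N$ contributes to $G^{(j)}(\textbf{X},\sigma_N(\textbf{X}))$ at degree exactly $N$ only through the linear $Y_j$-term. Once this observation is established, the induction goes through purely formally, and the limit $s(\textbf{X}):=\sum_{n\geq 1} s_n(\textbf{X})$ exists in $R[[\textbf{X}]]^m$ because only finitely many $s_n$ contribute to any given monomial. I do not expect any serious obstacle; this is a standard Hensel-type argument specialized to the setting of formal power series.
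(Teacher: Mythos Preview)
Your argument is correct and is precisely the standard successive-approximation construction that the paper invokes: the paper gives no proof of its own but cites \cite[Appendix A.4.5]{haze}, and Remark~\ref{exs} confirms that the cited construction is degree-by-degree with first step $s(\textbf{X})\equiv -\textbf{X}$ mod degree~$2$, exactly your base case. One tiny wording quibble: your phrase ``at least one factor from $\textbf{X}$ or from another $\textbf{Y}$-coordinate'' does not literally cover a pure power $Y_k^2$, but the conclusion still holds because $\sigma_{N-1}$ has no constant term, so any quadratic-or-higher monomial in $\textbf{Y}$ with one $s_N$-factor picks up degree $\geq N+1$ after substitution.
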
 

\begin{rem} \label{exs}  The proof of the existence of the power series $s(\textbf{X})$ gives an explicit construction of this power series. The first step of the construction yields $s(\textbf{X})= - \textbf{X}$ mod (degree $2$), a fact we will need later in Section \ref{sechom}. \end{rem}

\begin{defn} \label{hom}\cite[9.4 Homomorphisms and isomorphisms]{haze} Let $G(\textbf{X},\textbf{Y})$ and $G'(\textbf{X},\textbf{Y})$ be $m$-dimensional formal group laws over $R$. A \textit{homomorphism} $$G(\textbf{X},\textbf{Y}) \rightarrow G'(\textbf{X},\textbf{Y})$$ over $R$ is an $m$-tuple of power series $\alpha(\textbf{X})$ in $n$ indeterminantes such that $\alpha(\textbf{X}) \equiv 0$ mod (degree 1) and $$ \alpha(G(\textbf{X},\textbf{Y}))= G'(\alpha(\textbf{X}),\alpha(\textbf{Y})).$$ The homomorphism $\alpha(\textbf{X})$ is an \textit{isomorphism} if there exists a homomorphism $\beta(\textbf{X}): G'(\textbf{X},\textbf{Y}) \rightarrow G(\textbf{X},\textbf{Y})$ such that $\alpha(\beta(\textbf{X}))= \beta(\alpha(\textbf{X}))$. \end{defn}

\begin{lemma} \label{filtog}The ring of functions $\OG$ to a formal group law $G$ is complete with respect to the topology induced by the following descending filtration 
\begin{eqnarray} 
F^i\OG=\left\lbrace f \in \OG | \mbox{all monomials of} \ f \ \mbox{have total degree} \geq i \right\rbrace.\end{eqnarray} \end{lemma}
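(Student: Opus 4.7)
The plan is to recognize that the filtration $F^\bullet \mathcal{O}(G)$ is precisely the $\mathfrak{m}$-adic filtration for the maximal ideal $\mathfrak{m} = (t^{(1)}, \ldots, t^{(m)})$ of the formal power series ring $R[[t^{(1)},\dots,t^{(m)}]]$, so the claim reduces to the standard fact that a formal power series ring in finitely many variables over any commutative ring is complete in its $\mathfrak{m}$-adic topology. No deep input is needed; the argument is bookkeeping with homogeneous components.

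Concretely, I would first check that every $f \in \mathcal{O}(G)$ decomposes uniquely as a sum $f = \sum_{n \geq 0} f_n$ with $f_n$ the homogeneous component of total degree $n$, and that $F^i\mathcal{O}(G)$ consists of those $f$ with $f_n = 0$ for all $n < i$. This immediately gives separatedness: $\bigcap_i F^i \mathcal{O}(G) = 0$, so limits of Cauchy sequences, if they exist, are unique.

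For existence of limits, take a Cauchy sequence $(g_k)_{k \geq 0}$. Unfolding the definition, for each $i \geq 0$ there is $N_i$ such that $g_k - g_l \in F^i \mathcal{O}(G)$ whenever $k, l \geq N_i$, which says exactly that the degree-$n$ homogeneous component $(g_k)_n$ is independent of $k$ for $k \geq N_{n+1}$. Denote this stable value by $\phi_n \in R[t^{(1)},\dots,t^{(m)}]$ and set
\begin{equation*}
f \;=\; \sum_{n \geq 0} \phi_n \;\in\; \mathcal{O}(G).
\end{equation*}
By construction $f - g_k$ has vanishing homogeneous parts in degrees $< i$ whenever $k \geq N_i$, that is, $f - g_k \in F^i \mathcal{O}(G)$, so $g_k \to f$.

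Equivalently, one may phrase the completeness as the isomorphism $\mathcal{O}(G) \xrightarrow{\sim} \varprojlim_i \mathcal{O}(G)/F^i \mathcal{O}(G)$; the inverse sends a compatible family of truncations to the formal sum of its degree-$n$ parts. I do not anticipate any real obstacle here: separatedness is evident from the definition of formal power series, and the limit construction is forced by compatibility of the homogeneous components. The lemma is really a reminder of a textbook fact, inserted to fix the topology used throughout Section~\ref{sec1}.
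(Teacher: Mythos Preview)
Your argument is correct and is exactly the standard verification that a formal power series ring is complete for its $\mathfrak{m}$-adic topology. The paper itself states Lemma~\ref{filtog} without proof, treating it as a well-known fact, so your write-up supplies what the paper omits rather than diverging from it.
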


\begin{notation} \label{not} \begin{itemize} 
\item We denote by the sign $\hat{\otimes}$ the completed tensor product with respect to the above topology. Thus we can identify  $\OG^{\hat{\otimes}n}$ with the ring of formal power series in $nm$ indeterminates $$R[[t_1^{(1)}, \ldots t_1^{(m)},t_2^{(1)}, \ldots, t_2^{(m)}, \ldots,t_n^{(1)}, \ldots, t_n^{(m)} ]].$$
\item For the elements $t_i^{(j)}$ of the ring of functions $\OG$ we will use equivalently the notation $1 \otimes \ldots \otimes 1 \otimes t^{(j)} \otimes 1 \otimes \ldots \otimes 1$ (with $t^{(j)}$ at the $i$-th entry) for all $j=1,\ldots , m$. 
\item For simplicity we write \begin{itemize} 
\item $\underline{t}_1$ for $t_1^{(1)}, \ldots, t_1^{(m)}$, 
\item $\underline{t}_{1, \ldots, n}$ for $t_1^{(1)}, \ldots, t_1^{(m)},t_2^{(1)}, \ldots, t_2^{(m)}, \ldots, t_n^{(1)}, \ldots, t_n^{(m)}.$  \end{itemize}
\item We use the general multi-index notation $\underline{j}$ for the tuple $(j_1, \ldots ,j_m)$. 
\item If $m=1$ or $n=1$ we skip the upper, respectively lower index and write $t_i$ for $t_i^{(1)}$ respectively $t^{(j)}$ for $t^{(j)}_1$. 
\end{itemize}
\end{notation}

\textbf{Lie algebra and universal enveloping algebra} \\
Let $R[\underline{t}_1]$ be the polynomial ring over $R$ in $m$ variables and let $\frac{\partial}{\partial t_1^{(j)}}$ for all $j \in \{1,\ldots, m\}$ be the $j$-th partial derivative. Let $\Der_R(\OG, \OG)$ denote the set of $R$-derivations of $\OG$. Then $\Der_R(\OG, \OG)$ is a free $\OG$-module on the basis $\frac{\partial}{\partial t_1^{(1)}}, \ldots, \frac{\partial}{\partial t_1^{(m)}}$. If $d, d_1, d_2 \in  \Der_R(\OG, \OG)$ and $r \in R$, then the mappings $rd, d_1+d_2$ and $[d_1, d_2]:= d_1d_2-d_2d_1$ are also derivations, see \cite[Chap.III §10.4]{bour1}. Thus, the set $\Der_R(\OG, \OG)$ is an $R$-module which is also a Lie algebra. 

Let $e_j$ denote the $j$-th partial derivative $\frac{\partial}{\partial t_1^{(j)}}$ evaluated at $0$. We denote by $\g$ the free $R$-module on the basis $e_1, \ldots, e_m$. If $L=\sum_{j=1}^m r_i e_j \in \g$ and $f \in \OG$, then we can apply $L$ to $f$ by \be \label{lf} L(f)=\sum_{j=1}^m r_j \frac{\partial f}{\partial t_1^{(j)}}(0).\ee  Hence we can identify $\g$ with the set of $R$-derivations of $\OG$ into $R$, where $R$ is considered as a $\OG$-module via evaluation at zero. We denote this set by $\Der_0(\OG, R)$.

The elements $\gamma_{lk}^j$ of Definition \ref{law} of a formal group law $G$ define a Lie algebra structure on $\g$, as follows (see \cite[Chap.II, p.79]{haze}): \begin{equation} \label{lb} [e_l,e_k]= \sum_{j=1}^{m}(\gamma_{lk}^j-\gamma_{kl}^j)e_j.\end{equation} However, $\g$ inherits also a Lie algebra structure by the canonical bijection of $\Der_R(\OG, \OG)$ and $\Der_0(\OG, R)$. One can check that both definitions of the Lie-bracket coincide.

Let $\mathcal{U}(\mathfrak{g})$ denote the universal enveloping algebra of $\g$. The theorem of Poincar\'{e}-Birkhoff-Witt (\cite[Chap.I.2.7, Thm. 1., Cor. 3.]{bour}) says that the underlying set of $\mathcal{U}(\mathfrak{g})$ is the polynomial ring $R[e_1, \ldots, e_m]$. Therefore we denote an arbitrary element of $\mathcal{U}(\mathfrak{g})$ by $\sum c_{\underline{j}}\underline{e}^{\underline{j}}$ with $c_{\underline{j}} \in R$.

\subsection{Hopf algebra structures} \label{sechopf}
The algebras $\OG$ and $\mathcal{U}(\mathfrak{g})$ carry a (complete) Hopf algebra structure. In the following we will describe the maps defining these structures. As a reference for the definition of a Hopf algebra one can take the book of M.E. Sweedler about Hopf algebras, \cite[Chap.I-VI.]{sweedler}, or the book of Ch. Kassel about Quantum groups, \cite[Chap.III]{kassel}. For the definition of a complete Hopf algebra we have to consider complete $R$-modules, i.e. topologized $R$-modules which are complete with respect to a given topology. In our cases this topology will come from a descending filtration $\{F^nM\}$ on the $R$-module $M$. By replacing $R$-modules by complete $R$-modules, and also tensor products by complete tensor products, one can define in the same way as for Hopf algebras a \textit{complete Hopf algebra} over $R$.

\begin{prop} \cite[Chap. VII.36]{haze} \label{hog}
Let $G$ be a formal group law. Then the ring of functions on $G$ carries a complete Hopf algebra structure $(\OG,\mathcal 5, \eta, \mu, \epsilon, s)$. The maps are given by \begin{center} \begin{tabular}{cccccccccc}
$\mathcal 5$:& $\OG \hat{\otimes} \OG$ &$\rightarrow$& $\OG$& & &$\eta:$& $R$ &$\rightarrow$& $\OG$ \\& $f \otimes g$ &$\mapsto$& $f\cdot g$& & & & $1$ &$\mapsto$& $1$ \end{tabular} \end{center}
\begin{center} \begin{tabular}{cccccccccc}
$\mu$:& $\OG$ &$\rightarrow$& $\OG \hat{\otimes} \OG$& &$\epsilon:$& $\OG$ &$\rightarrow$& $R$ \\& $t^{(i)} $ &$\mapsto$& $G^{(i)}(t_1^{(1)} , \ldots,  t_1^{(m)} , t_2^{(1)}, \ldots,  t_2^{(m)})$& & & $f$ &$\mapsto$& $f(0)$ \end{tabular} \end{center}
\begin{center} \begin{tabular}{cccc}
$s$:& $\OG$ &$\rightarrow$& $\OG$\\& $t^{(i)} $ &$\mapsto$& $s(t^{(i)}) $,\end{tabular} \end{center} where the antipode map $s$ is given by Proposition \ref{exs} by the condition that $$G^{(i)}(t^{(1)} , \ldots,  t^{(m)} , s(t^{(1)}), \ldots,  s(t^{(m)}))=0.$$
\end{prop}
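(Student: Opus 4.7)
The plan is to verify each of the complete Hopf algebra axioms directly, translating the two defining properties of the formal group law in Definition \ref{law} (together with the existence of the antipode in Proposition \ref{exs}) into the corresponding axioms for comultiplication, counit, and antipode. The algebra structure $(\OG, \mathcal{5}, \eta)$ is nothing but the standard commutative $R$-algebra structure on the formal power series ring, and continuity of $\mathcal 5$ is immediate from Lemma \ref{filtog} since $F^i \cdot F^j \subset F^{i+j}$.

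First I would argue that the displayed definitions of $\mu$, $\epsilon$ and $s$ on the generators $t^{(i)}$ extend uniquely to continuous $R$-algebra homomorphisms. The key point for $\mu$ is that condition (1) of Definition \ref{law} forces $G^{(i)}(\underline{t}_1, \underline{t}_2)$ to lie in the augmentation ideal of $\OG \hat\otimes \OG$, so substitution into an arbitrary power series in $\OG$ converges in the complete tensor product and defines a continuous algebra map by the universal property of $R[[t^{(1)},\ldots,t^{(m)}]]$. The counit $\epsilon$ is just the evaluation-at-zero augmentation, and $s$ is well-defined by the explicit construction recalled in Remark \ref{exs}.

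With the maps in place, the axioms reduce to identities among formal power series. Coassociativity of $\mu$ is the statement that both $(\mu \hat\otimes \id)\mu(t^{(i)})$ and $(\id \hat\otimes \mu)\mu(t^{(i)})$ yield, respectively, $G^{(i)}(G(\underline{t}_1,\underline{t}_2),\underline{t}_3)$ and $G^{(i)}(\underline{t}_1, G(\underline{t}_2,\underline{t}_3))$, which agree precisely by axiom (2). The counit axiom $(\epsilon \hat\otimes \id)\mu = \id = (\id \hat\otimes \epsilon)\mu$ amounts to $G^{(i)}(0,\underline{t}_2) = t^{(i)}_2$ and $G^{(i)}(\underline{t}_1,0) = t^{(i)}_1$, which follow from axiom (1) by standard formal group law arguments (the restriction of the group law to either factor is a homomorphism inducing the identity on the linear part, hence equal to the identity in each degree). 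The antipode identity $\mathcal{5}(s \hat\otimes \id)\mu(t^{(i)}) = \eta\epsilon(t^{(i)}) = \mathcal{5}(\id \hat\otimes s)\mu(t^{(i)})$ follows from the defining property $G^{(i)}(\textbf{X}, s(\textbf{X})) = 0$ of Proposition \ref{exs} together with the analogous left-inverse identity, which is obtained by applying the uniqueness of two-sided inverses in the non-commutative formal power series sense, or equivalently by running the same inductive construction on the other side.

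The main, though essentially bookkeeping, obstacle is ensuring at each step that all maps respect the descending filtrations so that they extend continuously to the completed tensor products, yielding a genuinely \emph{complete} Hopf algebra rather than an abstract one. Concretely, one needs $\mu(F^i\OG) \subset F^i(\OG \hat\otimes \OG)$, $s(F^i\OG) \subset F^i\OG$, and compatibility of $\epsilon$ with the filtration, each of which is immediate from the shape of the power series $G^{(i)}$ and $s(\textbf{X}) \equiv -\textbf{X}$ modulo degree $2$ (Remark \ref{exs}). With these continuity checks in hand the Hopf algebra identities, once verified on the generators $t^{(i)}$, propagate to all of $\OG$ by continuity, completing the proof.
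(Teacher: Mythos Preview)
The paper does not supply its own proof of this proposition; it simply records the statement with a reference to \cite[Chap.~VII.36]{haze}. Your direct verification of the complete Hopf algebra axioms is the standard argument and is essentially what one finds in Hazewinkel, so there is nothing to compare against and your approach is appropriate.

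One small imprecision: in the counit step you argue that $G^{(i)}(\underline t_1,0)=t_1^{(i)}$ because ``the restriction of the group law to either factor is a homomorphism inducing the identity on the linear part, hence equal to the identity in each degree.'' Over a general integral domain $R$ this reasoning is not quite enough---a formal group law endomorphism with identity linear part need not be the identity. The clean argument from axioms (1) and (2) of Definition~\ref{law} is: set $\varphi(\textbf{X})=G(\textbf{X},0)$; axiom (1) gives $\varphi(\textbf{X})\equiv\textbf{X}$ modulo degree $2$, and specializing axiom (2) to $\textbf{Y}=\textbf{Z}=0$ yields $\varphi\circ\varphi=\varphi$, whence $\varphi=\id$ since $\varphi$ is invertible. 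With this adjustment your proof goes through.
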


\begin{defnprop}
Let $\D$ be the continuous dual of $\OG$, i.e. $\D=\OG^\circ=\varinjlim \Hom_R(\OG/F^n\OG, R)$, where the filtration was given in Lemma \ref{filtog}. Then the complete Hopf algebra structure on $\OG$ yields a Hopf algebra structure on its continuous dual $(\D,\mathcal 5, \eta, \mu, \epsilon, s)$ given by dualizing the structure morphsims of $(\OG,\mathcal 5, \eta, \mu, \epsilon, s)$.
\end{defnprop} 
\begin{proof} See \cite[Chap.VII.36]{haze} and note that $\D$ is in our case actually a Hopf algebra, since we didn't require that the antipode $s$ is a $\D$-module homomorphism. \end{proof}

%\begin{remark} evtl hier noch das mit dem polynomring \end{remark} 
Note that we have associated to a formal group law $G$ the complete Hopf algebra $\OG$ and the Hopf algebra $\D$. These objects are dual to each other, where one gets from $\OG$ to $\D$ by taking continuous linear duals and from $\D$ to $\OG$ by taking linear duals. This duality extends to the categories formed by these objects and is known as Cartier duality, see for example \cite[Chap.I.2]{die}.

\begin{prop} \cite[Chap.II.14.3]{haze} \label{hug}
Let $\g$ be a Lie algebra and $\Ug$ the universal enveloping algebra of $\g$. Then $\Ug$ carries a Hopf algebra structure $(\Ug,\mathcal 5, \eta, \mu, \epsilon, s)$. The maps are given by:
\begin{center} \begin{tabular}{cccccccccc}
$\mathcal 5$:& $\mathcal{U}(\mathfrak{g}) \otimes \mathcal{U}(\mathfrak{g})$ &$\rightarrow$& $\mathcal{U}(\mathfrak{g})$& & &$\eta:$& $R$ &$\rightarrow$& $\mathcal{U}(\mathfrak{g})$ \\& $x \otimes y$ &$\mapsto$& $x \cdot y$& & & & $1$ &$\mapsto$& $1$ \end{tabular} \end{center}
\begin{center} \begin{tabular}{cccccccccc}
$\mu$:& $\g$ &$\rightarrow$& $\mathcal{U}(\mathfrak{g}) \otimes \mathcal{U}(\mathfrak{g})$& & &$\epsilon:$& $\g$ &$\rightarrow$& $R$ \\& $L$ &$\mapsto$& $L \otimes 1 + 1 \otimes L$& & & & $L$ &$\mapsto$& $0$\end{tabular} \end{center}
\begin{center} \begin{tabular}{cccc}
$s$:& $\g$ &$\rightarrow$& $\mathcal{U}(\mathfrak{g})$\\& $L$ &$\mapsto$& $-L $.\end{tabular} \end{center}
\end{prop}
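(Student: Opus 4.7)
The plan is to exploit the universal property of $\Ug$ to extend the maps $\mu$, $\epsilon$, and $s$ from $\g$ to all of $\Ug$, then verify the Hopf algebra axioms on the generating set $\g$. Multiplication and unit require no argument: they come directly from the associative algebra structure of $\Ug$.

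First I would construct the coproduct. The map $L \mapsto L\otimes 1 + 1 \otimes L$ is $R$-linear from $\g$ into the associative algebra $\Ug \otimes \Ug$ (equipped with componentwise multiplication). To extend to an algebra homomorphism $\mu:\Ug \to \Ug \otimes \Ug$ via the universal property, I need to check it is a Lie algebra homomorphism, where $\Ug\otimes \Ug$ carries the commutator bracket. A direct computation shows that $L_1\otimes 1$ commutes with $1\otimes L_2$, so
\begin{eqnarray*}
[L_1\otimes 1 + 1\otimes L_1,\, L_2 \otimes 1 + 1 \otimes L_2] = [L_1,L_2]\otimes 1 + 1 \otimes [L_1,L_2],
\end{eqnarray*}
which is exactly $\mu([L_1,L_2])$. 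The counit $\epsilon$ is trivially a Lie algebra homomorphism (the bracket on $R$ is zero), hence extends to an algebra homomorphism $\Ug \to R$ sending $L \mapsto 0$ and $1 \mapsto 1$.

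Next, I would handle the antipode, which has to be an algebra anti-homomorphism. Equivalently, $s$ should extend to an algebra homomorphism from $\Ug$ into the opposite algebra $\Ug^{op}$. By the universal property of $\Ug$, I only need that $L \mapsto -L$ is a Lie homomorphism from $\g$ into $\Ug^{op}$ with its commutator bracket. The verification is immediate:
\begin{eqnarray*}
[-L_1,-L_2]_{\Ug^{op}} = (-L_2)(-L_1) - (-L_1)(-L_2) = L_2L_1 - L_1L_2 = -[L_1,L_2],
\end{eqnarray*}
which equals $s([L_1,L_2])$. Thus $s$ extends uniquely.

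Finally, I would verify the Hopf algebra axioms: coassociativity $(\mu\otimes \id)\mu = (\id \otimes \mu)\mu$, the counit identities $(\epsilon \otimes \id)\mu = \id = (\id \otimes \epsilon)\mu$, the compatibility of $\mu$ and $\epsilon$ with multiplication, and the antipode axiom $\mathcal{5}\circ (s\otimes \id)\circ \mu = \eta \circ \epsilon = \mathcal{5}\circ (\id \otimes s)\circ \mu$. Since both sides of each identity are either algebra homomorphisms or algebra anti-homomorphisms $\Ug \to \Ug^{\otimes k}$ (or, in the last case, again deduced from the universal property), it suffices to test them on the generators $L \in \g$, where each reduces to a one-line calculation (e.g. for the antipode axiom: $\mathcal{5}(s(L)\otimes 1 + 1\otimes s(L) \cdot 1) = -L + L = 0 = \eta(\epsilon(L))$ after recalling $\mu(L)=L\otimes 1 + 1\otimes L$). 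There is no genuine obstacle in this proof; the only point requiring care is noting that the antipode is to be extended as an anti-homomorphism, hence the passage through $\Ug^{op}$.
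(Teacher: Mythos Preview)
Your argument is the standard one and is essentially correct. Note, however, that the paper does not actually prove this proposition: it is stated with a citation to \cite[Chap.II.14.3]{haze} and no proof is given. So there is no ``paper's own proof'' to compare against; what you have written is precisely the kind of verification one finds in the cited reference.

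One small point deserves more care. For coassociativity and the counit identities your reduction to generators is clean, since both sides are algebra homomorphisms out of $\Ug$ and hence agree once they agree on $\g$. For the antipode identity this reduction is not automatic: the composite $\mathcal 5\circ(s\otimes\id)\circ\mu$ is \emph{not} an algebra homomorphism in general (because $\mathcal 5$ is not one on $\Ug\otimes\Ug$ unless $\Ug$ is commutative). The usual fix is to show directly that the set of $x\in\Ug$ on which $\mathcal 5\circ(s\otimes\id)\circ\mu$ and $\eta\circ\epsilon$ agree is a subalgebra: if the identity holds for $x$ and $y$, then using that $\mu$ is multiplicative and writing $\mu(x)=\sum x_{(1)}\otimes x_{(2)}$, $\mu(y)=\sum y_{(1)}\otimes y_{(2)}$, one computes $\sum s(x_{(1)}y_{(1)})x_{(2)}y_{(2)}=\sum s(y_{(1)})s(x_{(1)})x_{(2)}y_{(2)}=\sum s(y_{(1)})\,\eta\epsilon(x)\,y_{(2)}=\eta\epsilon(x)\eta\epsilon(y)=\eta\epsilon(xy)$, using that $s$ is an anti-homomorphism. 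With this in hand your check on $L\in\g$ finishes the job. Your parenthetical computation there also has a small slip: applying $(s\otimes\id)$ to $\mu(L)=L\otimes 1+1\otimes L$ gives $-L\otimes 1+1\otimes L$, whose image under $\mathcal 5$ is $-L+L=0$, as desired.
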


Let $\g$ be an $m$-dimensional Lie algebra over $R$ which is free as an $R$-module with basis $e_1, \ldots , e_m$ and let $\Ug$ be the universal enveloping algebra of $\g$. Then we denote by $\U*(\g,R):=\Hom_{R}(\Ug,R)$ or by $\U*$ - if $\g$ and $R$ are clear from the context - the $R$-linear dual of $\Ug$. 

Let $d^{j_1}t^{(1)} \cdots d^{j_m}t^{(m)}$ be the dual basis of $e_i^{j_1} \cdots e_m^{j_m}$ with $j_i \in \{1,\ldots n\}$ for all $i \in \{1,\ldots m\}$. Then $\U*$ has a ring structure with underlying set \be \label{dual} R\{\{d\underline{t}\}\}:=\prod_{\underline{j}} R d^{j_1}t^{(1)} \cdots d^{j_m}t^{(m)}\ee and the two binary operations $+$ as usual addition and multiplication $\mal$ given by the comultiplication of $\Ug$: 
\bes \mal: \U* \otimes \U* &\rightarrow& \U* \\ \psi \otimes \varphi &\mapsto& \left[x \mapsto \rho(\psi \otimes \varphi)(\mu(x))\right],\ees where $\rho: \U* \otimes \U* \rightarrow (\Ug \otimes \Ug)^\star$ is the linear injection given by $$\rho(\psi \otimes \varphi)(x \otimes y)=\psi(x) \cdot \varphi(y).$$ 
Analogous to $\OG$ we can define a filtration on $\U*$ by 
\begin{eqnarray} \label{filu}
\mathcal{F}^i\U* =\left\lbrace \varphi \in \U* | \mbox{for all monomials} \ d^{\underline{j}}\underline{t} \ \mbox{is} \ |\underline{j}| \geq i \right\rbrace,\end{eqnarray} for all $i \in \mathbb{N}$, such that $\U*$ is a completed ring with respect to the topology induced by this filtration. And we can identify $\U* \hat{\otimes} \U*$ with $(\Ug \otimes \Ug)^\star$. The underlying set of $(\U*)^{\hat{\otimes}n}$ is given by $R\{\{d\underline{t}_1, \ldots, d\underline{t}_n\}\}$, where we use the multi-index notation $d^{\underline{r}}\underline{t}_i$ - or equivalently $1 \otimes \ldots \otimes 1 \otimes d^{\underline{r}}\underline{t} \otimes 1 \otimes \ldots \otimes 1$ with $d^{\underline{r}}\underline{t}$ at the $i$-th entry - for $d^{r_1}t_i^{(1)} \cdots d^{r_m}t_i^{(m)}$.

\begin{prop}
Let $\g$ be a free Lie algebra, $\Ug$ the universal enveloping algebra of $\g$ and $\U*$ the linear dual of $\Ug$. Then $\U*$ carries a complete Hopf algebra structure $(\U*,\mal, \eta, \mu, \epsilon, s)$. The maps are given by:
\begin{center} \begin{tabular}{cccccccccc}
$\mal$:& $\U* \hat{\otimes} \U*$ &$\rightarrow$& $\U*$& & &$\eta:$& $R$ &$\rightarrow$& $\U*$ \\& $\psi \otimes \varphi$ &$\mapsto$& $\left[x \mapsto \rho(\psi \otimes \varphi)(\mu(x))\right]$& & & & $1$ &$\mapsto$& $\left[x \mapsto \epsilon(x)\right] $ \end{tabular} \end{center}
\begin{center} \begin{tabular}{cccccccccc}
$\mu$:& $\U*$ &$\rightarrow$& $\U* \hat{\otimes} \U*$& & &$\epsilon:$& $\U*$ &$\rightarrow$& $R$ \\& $\varphi$ &$\mapsto$& $\left[x \otimes y \mapsto \varphi(\mathcal 5(x \otimes y))\right]$& & & & $\varphi$ &$\mapsto$& $\varphi(1)$ \end{tabular} \end{center}
\begin{center} \begin{tabular}{cccc}
$s$:& $\U*$ &$\rightarrow$& $\U*$\\& $\varphi$ &$\mapsto$& $\left[x \mapsto \varphi(s(x))\right] $.\end{tabular} \end{center} \end{prop}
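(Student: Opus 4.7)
The plan is to obtain the Hopf algebra structure on $\U*$ by dualizing the one on $\Ug$ from Proposition \ref{hug}, and then to verify that all the structure maps are continuous with respect to the filtration $\{\mathcal{F}^i \U*\}$, so that the Hopf algebra axioms transfer automatically to the completion. Concretely, $\mal$ will arise as the transpose of the comultiplication on $\Ug$, $\mu$ as the transpose of the multiplication on $\Ug$, and similarly for the unit, counit and antipode.

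First I would check that $\U*$, equipped with the filtration $\{\mathcal{F}^i \U*\}$, is a complete $R$-module. Since the PBW monomials $\underline{e}^{\underline{j}}$ form an $R$-basis of $\Ug$ which is dual to $\{d^{\underline{j}}\underline{t}\}$, the module $\U*$ is identified with the formal power series object $R\{\{d\underline{t}\}\}=\prod_{\underline{j}} R \, d^{\underline{j}}\underline{t}$ of (\ref{dual}); this is complete in the associated product topology, and the filtration $\mathcal{F}^i$ simply records the order of vanishing at the origin. Next I would establish the identification $\U* \hat{\otimes} \U* \cong (\Ug \otimes \Ug)^\star$ already flagged in the text: a linear functional on $\Ug \otimes \Ug$ is determined by its values on the product PBW basis $\underline{e}^{\underline{i}} \otimes \underline{e}^{\underline{j}}$, and an arbitrary family of such values defines an element of the completed tensor product.

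Under this identification all structure maps arise by transposition. The transpose of $\mathcal 5_{\Ug}: \Ug \otimes \Ug \to \Ug$ gives $\mu: \U* \to (\Ug \otimes \Ug)^\star \cong \U* \hat{\otimes} \U*$; the transpose of $\mu_{\Ug}: \Ug \to \Ug \otimes \Ug$, composed with the inverse of $\rho$, gives $\mal$; the unit, counit and antipode are transposed analogously. Continuity of each map with respect to the degree filtration reduces to the corresponding degree behaviour on $\Ug$: both multiplication and comultiplication on $\Ug$ respect the PBW total-degree filtration, so their transposes respect $\mathcal{F}^\bullet$; the antipode sends $e_i$ to $-e_i$ and thus also preserves degree.

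Finally, the Hopf algebra axioms --- associativity of $\mal$, coassociativity of $\mu$, their bialgebra compatibility, and the unit, counit and antipode identities --- are obtained by transposing the corresponding commutative diagrams for $\Ug$, which hold by Proposition \ref{hug}. The main technical point, which I expect to be the only real obstacle, is to verify that the natural identifications $(\Ug^{\otimes n})^\star \cong (\U*)^{\hat{\otimes} n}$ are compatible with iterated transposes, so that the transposed diagrams actually live inside $(\U*)^{\hat{\otimes} n}$ at every stage; once this compatibility is set up, the axioms hold on the dense subspace of finite sums $\bigoplus_{\underline{j}} R \, d^{\underline{j}}\underline{t}$ and extend by continuity to all of $\U*$, yielding the complete Hopf algebra structure claimed.
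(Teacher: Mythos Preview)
Your proposal is correct and follows essentially the same approach as the paper: both obtain the complete Hopf algebra structure on $\U*$ by dualizing the Hopf algebra structure on $\Ug$, with the key technical point being the identification $(\Ug \otimes \Ug)^\star \cong \U* \hat{\otimes} \U*$ replacing the usual finiteness conditions. The paper's version is terser --- it cites Sweedler for the bialgebra axioms and only spells out the antipode identity --- while you unpack the continuity and compatibility checks more explicitly, but the underlying argument is the same.
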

\begin{proof} Note first that again since $\mal$ is a continuous map it is enough to define this map on $\varphi \otimes \psi \in \U* \otimes \U*$. After the definition of a Hopf algebra we have to check that $(\U*,\mal, \eta, \mu, \epsilon)$ is a complete bialgebra. For this see \cite[Chap.I-IV]{sweedler} and note that the finiteness condition in Sweedlers book can in our case be replaced by the identification $$(\Ug \otimes \Ug)^\star \cong \U* \hat{\otimes} \U*.$$ Secondly we have to show that $s$ is actually an antipode, i.e. that $$\mal \circ (s \otimes 1) \circ \mu=\eta \circ \epsilon= \mal \circ (1 \otimes s) \circ \mu$$ but this can be easily verified from the antipode condition of $\Ug$. \end{proof}

In the following lemma, we will provide explicit formulas for the multiplication and comultiplication in $\U*$. Especially the explicit formula for the multiplication will play an essential role in the next section.

\begin{lemma} \cite[Chap.V.5-6]{serrelie} \label{expl}  Let $\g$ be an $m$-dimensional Lie algebra over $R$ which is free as an $R$-module with basis $e_1, \ldots , e_m$ and let $\U*$ be the linear dual of the universal enveloping algebra of $\g$. Let $d^{j_1}t^{(1)} \cdots d^{j_m}t^{(m)}$ be the dual basis of $e_i^{j_1} \cdots e_m^{j_m}$ with $j_i \in \{1,\ldots n\}$ for all $i \in \{1,\ldots m\}$, so that an element of $\U*$ is of the form $\sum_{\underline{j}} c_{\underline{j}}d^{j_1}t^{(1)} \cdots d^{j_m}t^{(m)}$ with $c_{\underline{j}} \in R$. Then multiplication and comultiplication in $\U*$ are given by the continuous $R$-linear extension of
\begin{center} \begin{tabular}{ccccccccc}
$\mal$:& $\U* \otimes \U*$ &$\rightarrow$& $\U*$ & \mbox{and} & $\mu$:& $\U*$ &$\rightarrow$& $\U* \hat{\otimes} \U*$ \\& $d^{\underline{r}}\underline{t} \otimes d^{\underline{s}}\underline{t}$ &$\mapsto$& $\binom{\underline{r}+\underline{s}}{\underline{r}} d^{\underline{r}+\underline{s}}\underline{t}$& & & $d^{\underline{r}}\underline{t}$&$\mapsto$& $\sum_{\underline{l}+\underline{k}=\underline{r}} d^{\underline{l}}\underline{t} \otimes d^{\underline{k}}\underline{t}.$\end{tabular} \end{center}
\end{lemma}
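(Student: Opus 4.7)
The plan is to unwind the definitions of $\mal$ and $\mu$ on $\U*$ from the preceding proposition and reduce both identities to explicit dualities against the PBW basis of $\Ug$. Since both operations on $\U*$ are defined as continuous $R$-linear extensions, it suffices to verify the formulas on dual-basis generators evaluated on the PBW monomials $\underline{e}^{\underline{j}}$ (respectively on $\underline{e}^{\underline{a}} \otimes \underline{e}^{\underline{b}}$).

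For the multiplication, by definition,
\begin{equation*}
(d^{\underline{r}}\underline{t} \mal d^{\underline{s}}\underline{t})(\underline{e}^{\underline{j}}) = \rho(d^{\underline{r}}\underline{t} \otimes d^{\underline{s}}\underline{t})(\mu(\underline{e}^{\underline{j}})),
\end{equation*}
so the key step is to compute the coproduct $\mu(\underline{e}^{\underline{j}})$ in $\Ug \otimes \Ug$. Since $\mu$ is an algebra morphism with $\mu(e_i) = e_i \otimes 1 + 1 \otimes e_i$ primitive, and since $e_i \otimes 1$ commutes with $1 \otimes e_i$ in $\Ug \otimes \Ug$, the binomial theorem applied in each factor yields $\mu(e_i^{j_i}) = \sum_{k_i} \binom{j_i}{k_i} e_i^{k_i} \otimes e_i^{j_i - k_i}$. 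Multiplying these expressions componentwise in $\Ug \otimes \Ug$ over $i = 1,\ldots,m$ keeps each tensor slot in PBW order and gives
\begin{equation*}
\mu(\underline{e}^{\underline{j}}) = \sum_{\underline{k}+\underline{l}=\underline{j}} \binom{\underline{j}}{\underline{k}}\, \underline{e}^{\underline{k}} \otimes \underline{e}^{\underline{l}}.
\end{equation*}
Pairing against $d^{\underline{r}}\underline{t} \otimes d^{\underline{s}}\underline{t}$ via $\rho$ selects the single summand $\underline{k} = \underline{r}$, $\underline{l} = \underline{s}$, producing the coefficient $\binom{\underline{r}+\underline{s}}{\underline{r}}$ exactly when $\underline{j} = \underline{r}+\underline{s}$, which is the claimed multiplication formula.

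For the comultiplication I would use the identity
\begin{equation*}
\mu(d^{\underline{r}}\underline{t})(\underline{e}^{\underline{a}} \otimes \underline{e}^{\underline{b}}) = d^{\underline{r}}\underline{t}(\underline{e}^{\underline{a}} \cdot \underline{e}^{\underline{b}})
\end{equation*}
from the preceding proposition and compare with the evaluation of the proposed $\sum_{\underline{l}+\underline{k}=\underline{r}} d^{\underline{l}}\underline{t} \otimes d^{\underline{k}}\underline{t}$ on $\underline{e}^{\underline{a}} \otimes \underline{e}^{\underline{b}}$, which collapses by duality of bases to $\delta_{\underline{a}+\underline{b},\underline{r}}$. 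It thus remains to identify the coefficient of $\underline{e}^{\underline{r}}$ in the PBW expansion of $\underline{e}^{\underline{a}} \cdot \underline{e}^{\underline{b}}$ with $\delta_{\underline{a}+\underline{b},\underline{r}}$, which is the content of \cite[Chap.V.5-6]{serrelie}.

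The main delicate point lies in the comultiplication step: because $\Ug$ is in general non-commutative, rewriting the product $\underline{e}^{\underline{a}} \cdot \underline{e}^{\underline{b}}$ in PBW form can produce commutator correction terms which one must handle with care in order to confirm the asserted duality. Serre's treatment controls this via the PBW filtration and the associated duality between $\Ug$ and its linear dual, which is exactly the input needed. The multiplication formula, by contrast, is a purely formal consequence of the primitivity of the $e_i$ in $\Ug$ together with the binomial theorem applied in the two commuting tensor slots.
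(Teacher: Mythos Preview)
The paper does not prove this lemma; it simply cites Serre. Your proposal therefore goes beyond what the paper does, and your treatment of the multiplication $\mal$ is correct and is exactly the standard argument: primitivity of the $e_i$, the binomial expansion in the two commuting tensor slots, and the observation that the resulting tensor factors are already in PBW order.

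For the comultiplication, however, there is a genuine issue that your write-up papers over. You reduce the formula to the claim that the coefficient of $\underline{e}^{\underline{r}}$ in the PBW expansion of $\underline{e}^{\underline{a}}\cdot\underline{e}^{\underline{b}}$ is $\delta_{\underline{a}+\underline{b},\underline{r}}$, and then defer this to Serre. But this claim is \emph{false} in general for non-abelian $\g$: commutator corrections produce nonzero lower-degree PBW terms. For instance, with $[e_1,e_2]=e_1$ one has $e_2 e_1 = e_1 e_2 - e_1$, so $d^{(1,0)}\underline{t}(e_2\cdot e_1)=-1$, whereas the displayed formula for $\mu(d^{(1,0)}\underline{t})$ evaluated on $e_2\otimes e_1$ gives $0$. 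In other words, the comultiplication formula as stated in the lemma is only the associated-graded statement (correct modulo $\mathcal{F}^{|\underline{r}|+1}$), not an exact identity in $\U*\hat\otimes\U*$ when $\g$ is non-abelian. You correctly sensed this (``commutator correction terms which one must handle with care''), but the resolution is not that Serre makes the identity exact---it is that the identity holds only at the graded level.

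This does not affect anything downstream in the paper: only the multiplication formula is actually used (see Example~\ref{exmal} and the proof of Proposition~\ref{gen}), and your argument for that part is complete.
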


\begin{ex} \label{exmal}
Let $m=1$. Then the explicit formulas for multiplication and comultiplication amount to
$$\underbrace{dt \mal \cdots \mal dt}_{n-\mbox{times}}= \underbrace{dt \mal \cdots \mal dt}_{(n-2)-\mbox{times}} \mal 2d^2t = n!d^nt$$ $$\mu(dt)=1 \otimes dt + dt \otimes 1.$$
\end{ex}

\subsection{Cohomology complexes} \label{cc}

\begin{defnprop} \cite[Chap.XVIII.5]{kassel} \label{hc}
Let $(H,\mathcal 5, \eta, \mu, \epsilon,s)$ be a complete Hopf algebra over $R$. Set $T^n(H)=H^{\hat{\otimes}n}$ if $n >0$ and \hbox{$T^0(H)=R$}. We define linear maps $\partial_n^0, \ldots, \partial_n^{n+1}$ from $T^n(H)$ to $T^{n+1}(H)$ by the continuous extension of \begin{align*}
\partial_n^0(x_1 \otimes \cdots \otimes x_n) &= 1 \otimes x_1 \otimes \cdots \otimes x_n, \\
\partial_n^{n+1} (x_1 \otimes \cdots \otimes x_n) &=x_1 \otimes \cdots \otimes x_n \otimes 1 , \\
\partial_n^{i} (x_1 \otimes \cdots \otimes x_n) &=x_1 \otimes \cdots \otimes x_{i-1} \otimes \mu(x_i) \otimes x_{i+1} \otimes \cdots \otimes x_n  ,\end{align*} if $1\leq i \leq n$. If $n=0$, we set $\partial_0^0(1)=\partial_0^1(1)=1$. We have $\partial_{n+1}^j\partial_n^i=\partial^i_{n+1}\partial_n^{j-1}$ for all integers $i,j$ such that $0\leq i < j\leq n+2.$ We define the differential $\partial: T^n(H) \rightarrow T^{n+1}(H)$ by \be \partial=\sum_{i=0}^{n+1}(-1)^{i}\partial_n^i. \label{diff} \ee Then $\partial \circ \partial =0$ and we obtain a cochain complex $(T^\bullet(H), \partial)$ called \textit{cobar complex} of the complete Hopf algebra $H$. \end{defnprop}

In the case of the ring of functions $\OG$ to a formal group law and in the case of the dual of the universal enveloping algebra of $\mathfrak{g}$ we will use the following notation.
\begin{defn} \label{gc}
Let $G$ be a formal group law. An \textit{inhomogeneous n-cochain} of $G$ with coefficients in $R$ is an element of $\OG^{\hat{\otimes} n}$. We will denote the set of inhomogeneous $n$-cochains also by $K^n(G,R)$. The coboundary homomorphisms $\partial^n: K^n(G,R)  \rightarrow  K^{n+1}(G,R)$ of definition \ref{hc} transform into: \begin{align*}  \partial^n(f)(\underline{t}_{1, \ldots, n+1}) &= f(\underline{t}_{2, \ldots, n+1}) \\ &+ \sum_{i=1}^{n} (-1)^{i} f(\underline{t}_1, \ldots, G^{(1)}1(\underline{t}_i, \underline{t}_{i+1}), \ldots , G^{(m)}(\underline{t}_i, \underline{t}_{i+1}),\ldots, \underline{t}_{n+1}) \\ &+ (-1)^{n+1} f(\underline{t}_{1, \ldots, n}).\end{align*}
We obtain a cochain complex $(K^\bullet(G, R), \partial)$ whose cohomology group $H^n(G, R)$ is called \textit{n-th group cohomology of G with coefficients in R.}
\end{defn}

\begin{defnprop} \cite[Chap. I.2]{neukirch} \label{uc}
Let $\mathfrak{g}$ be a Lie algebra over $R$ and let $\mathcal{U}(\mathfrak{g})$ be its universal enveloping algebra. An \textit{inhomogeneous n-cochain} of $\mathcal{U}(\mathfrak{g})$ with coefficients in $R$ is an element of $\Hom_{R}((\mathcal{U}(\mathfrak{g}))^{\otimes n}, R)$. The coboundary homomorphisms \hbox{$\partial_u^n: \Hom_{R}((\mathcal{U}(\mathfrak{g}))^{\otimes n}, R)  \rightarrow  \Hom_{R}((\mathcal{U}(\mathfrak{g}))^{\otimes n+1}, R)$} given by \bes  \partial_u^n(u_1, \ldots u_{n+1}) = f(u_2, \ldots, u_{n+1}) &+& \sum_{i=1}^{n} (-1)^{i} f(u_1, \ldots, u_iu_{i+1},\ldots, u_{n+1})\\ &+& (-1)^{n+1} f(u_1, \ldots u_n) \ees define a cochain complex $(\Hom_{R}(U^\bullet, R),\partial_u)$.\end{defnprop}

\begin{rem} \label{remuc}
Since the set of inhomogeneous $n$-cochains can be identified with $\U*^{\hat{\otimes} n}$, the definition of the coboundary homomorphisms of Definition \ref{uc} is equivalent to the definition of the differential defined by (\ref{diff}) of Definition \ref{hc} for the complete Hopf algebra $\U*$. 
\end{rem}

\begin{defnprop} \label{propu}
A \textit{homogeneous n-cochain} of $\mathcal{U}(\mathfrak{g})$ with coefficients in $R$ is an element of $\Hom_{\mathcal{U}(\mathfrak{g})}((\mathcal{U}(\mathfrak{g}))^{\otimes n+1}, R)$, where $\mathcal{U}(\mathfrak{g})^{\otimes n}$ is considered as an $\mathcal{U}(\mathfrak{g})$-module via the following operation:
$$u.(u_0, \ldots u_{n-1})=(uu_0, \ldots u_{n-1}).$$ The map 
\begin{eqnarray*}
\iota^n: \Hom_{R}(\mathcal{U}(\mathfrak{g})^{\otimes n}, R) &\rightarrow& \Hom_{\mathcal{U}(\mathfrak{g})}(\mathcal{U}(\mathfrak{g})^{\otimes n+1}, R) \\
\varphi &\mapsto& \left[ (u_0,\ldots, u_n) \mapsto \varphi(u_1, \ldots u_n)\right] \\
\left[(u_1, \ldots, u_n) \mapsto \varphi(1,u_1, \ldots u_n)\right] &\reflectbox{$\mapsto$}& \varphi
\end{eqnarray*}is an isomorphism from the set of inhomogeneous to the set of homogeneous $n$-cochains. If we consider the following coboundary homomorphisms $$\partial_{u_h}^n: \Hom_{\mathcal{U}(\mathfrak{g})}((\mathcal{U}(\mathfrak{g}))^{\otimes n+1}, R) \rightarrow \Hom_{\mathcal{U}(\mathfrak{g})}((\mathcal{U}(\mathfrak{g}))^{\otimes n+2}, R)$$ defined by $$ \partial_{u_h}^n= \iota^{n+1} \circ \partial_u^n \circ (\iota^n)^{-1}$$ we obtain a complex $(\Hom_{\mathcal{U}(\mathfrak{g})}(U_h^\bullet, R),\partial_{u_h})$ of \textit{homogeneous n-cochains} and  $\iota^n$ yields an isomorphism $\iota:(\Hom_{R}(U^\bullet, R),\partial_u)  \rightarrow (\Hom_{\mathcal{U}(\mathfrak{g})}(U_h^\bullet, R),\partial_{u_h})$ of complexes.
\end{defnprop}
\begin{proof} It is enough to prove that $\iota$ is an isomorphism, since the remaining statement can be easily deduced from this. We check first that $\iota^n(\varphi)$ is \hbox{$\mathcal{U}(\mathfrak{g})$-invariant}:
$$ (u.\iota^n(\varphi))(u_0, \ldots u_n) = \iota^n(\varphi)(uu_0, \ldots u_n)= \varphi(u_1, \ldots u_n) = \iota^n(\varphi)(u_0, \ldots u_n).$$ 
Secondly we show that $\iota^n$ and $(\iota^n)^{-1}$ are inverse to each other:
\begin{align*}(\iota^n)^{-1} \circ \iota^n(\varphi)(u_1, \ldots, u_n) &= \iota^n(\varphi)(1,u_1, \ldots, u_n) = \varphi(u_1, \ldots, u_n) \\
\iota^n \circ (\iota^n)^{-1}(\varphi)(u_0, \ldots, u_n) &= (\iota^n)^{-1}(\varphi)(u_1, \ldots, u_n) = \varphi(1,u_1, \ldots, u_n) \\ \begin{scriptsize}(\varphi  \ \mbox{is homogeneous}) \end{scriptsize} \hspace{1cm} &= (u_0.\varphi)(1,u_1, \ldots, u_n) = \varphi(u_0,u_1, \ldots, u_n). \end{align*} \end{proof}

For the following definitions let $\g$ be an $m$-dimensional Lie algebra over $R$ which is free as an $R$-module with basis $e_1, \ldots , e_m$. Let $\bigwedge^n\g$ be the $n$-fold exterior product of $\g$ with basis $\{e_{i_1} \wedge \ldots \wedge e_{i_n} \ | \ i_1 <  \ldots < i_n\}, i_j \in \left\lbrace 1,\ldots , m \right\rbrace$. We endow $R$ with the trivial $\g$-action. 

\begin{defnprop}  cite[Chap. IV.3]{knapp}\label{lac} 
The set $\Hom_{R}( \bigwedge^n\g, R)$ is called the set of \textit{n-cochains} of $\g$ with coefficients in $R$ and denoted by $C^n(\g,R)$. Note that the rank of $C^n(\g,R)$ over $R$ is $\binom{m}{n}$. The boundary operators $\partial'^n: C^n \rightarrow C^{n+1}$ are given by the formula 
\bes \partial'^n(\omega)(e_{i_1} \wedge \ldots \wedge e_{i_{n+1}}) = \sum_{1 \leq r < s \leq n+1 } (-1)^{r+s} \omega([e_{i_r},e_{i_s}] \wedge e_{i_1} \wedge \ldots  \wedge e_{i_{n+1}})_{r,s}, \ees
where the notation $([e_{i_r},e_{i_s}] \wedge e_{i_1} \wedge \ldots  \wedge e_{i_{n+1}})_{r,s}$ indicates that the elements $e_{i_r}$ and $e_{i_s}$ are omitted.
We thus obtain, after assuring ourself that $\partial'^2=0$, a complex $(C^\bullet(\g,R),\partial')$ whose cohomology group $H^n(\g,R)$ is called \textit{n-the Lie algebra cohomology of $\g$ with coefficients in $R$}. \end{defnprop}

\begin{defn} \cite[Chap. IV.3]{knapp}\label{kc}
Let $\mathcal{U}(\mathfrak{g})$ be the universal enveloping algebra of $\g$. Set $$V_i(\g)= \mathcal{U}(\mathfrak{g}) \otimes {\bigwedge}^i\g$$ for all $i \in \{ 0, 1, 2, \ldots\}$ with the $\g$-module structure induced by the action on the first factor. The differential $d^{n-1}: V_n(\g) \rightarrow V_{n-1}(\g)$ is given by the formula
\begin{align*} 
d^{n-1}(u \otimes e_{i_1} \wedge \ldots \wedge e_{i_{n}}) =\sum_{1 \leq k < l \leq n} &(-1)^{k+l} (u \otimes [e_{i_{k}},e_{i_{l}}] \wedge e_{i_1} \wedge \ldots \wedge e_{i_{n}})_{k,l} \\ &+ \sum_{j=1}^{n} (-1)^{j+1} (ue_{i_{j}} \otimes e_{i_1} \wedge \ldots \wedge e_{i_{n}})_j,
\end{align*} 
where the notation $(e_{i_1} \wedge \ldots \wedge e_{i_{n}})_{k,l}$, respectively $(e_{i_1} \wedge \ldots \wedge e_{i_{n}})_j$ again indicates that the elements $e_{i_k}$ and $e_{i_l}$, respectively $e_{i_j}$ are omitted. This leads, after assuring ourselves that $d^2=0$, to a complex, called \textit{Koszul complex}. \end{defn}

The following two propositions relate the Koszul complex first to the standard homogeneous complex of $\mathcal{U}(\mathfrak{g})$ and secondly to the Lie algebra complex. 

\begin{prop} \cite[Chap. XIII.7, Theorem 7.1]{cartan} \label{k1prop}
Let $(\V^\bullet, d)$ be the Koszul complex defined above. Then the map $\nu: \Hom_{\mathcal{U}(\mathfrak{g})}(U_h^\bullet, R) \rightarrow \Hom_{\mathcal{U}(\mathfrak{g})}(V(\mathfrak{g})^\bullet, R)$ induced by the anti-sym\-metrisation map 
$$as_n: \bigwedge^n \mathfrak{g} \rightarrow \mathcal{U}^{\otimes n}$$ given by
$$ as_n(e_{i_1}\wedge \ldots \wedge e_{i_n})= \sum_{\alpha \in S_n} \sgn (\alpha) e_{i_{\alpha(1)}} \otimes \cdots \otimes e_{i_{\alpha(n)}},$$
with $i_j \in \left\lbrace 1,\ldots, m \right\rbrace$, is a quasi-isomorphism of complexes.
\end{prop}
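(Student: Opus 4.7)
\textit{Proof proposal.} The plan is to realize both $(V(\mathfrak{g})^\bullet,d)$ and $(U_h^\bullet,\partial_{u_h})$ as free $\mathcal{U}(\mathfrak{g})$-resolutions of the trivial module $R$, to show that the antisymmetrization extends to a chain map between these resolutions, and then to apply the comparison theorem of homological algebra.

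First, I would extend $as_n$ $\mathcal{U}(\mathfrak{g})$-linearly to a map
\[
\widetilde{as}_n\colon V_n(\mathfrak{g})=\mathcal{U}(\mathfrak{g})\otimes\bigwedge^n\mathfrak{g}\longrightarrow \mathcal{U}(\mathfrak{g})^{\otimes n+1},\quad u\otimes(e_{i_1}\wedge\cdots\wedge e_{i_n})\longmapsto u\otimes as_n(e_{i_1}\wedge\cdots\wedge e_{i_n}),
\]
landing in the underlying module of $U_h^\bullet$. Second, both $V(\mathfrak{g})^\bullet$ and $U_h^\bullet$ are free resolutions of $R$, augmented in degree $0$ by the counit $\epsilon\colon \mathcal{U}(\mathfrak{g})\to R$: for $U_h^\bullet$ this is the standard acyclicity of the bar construction, while for $V(\mathfrak{g})^\bullet$ it follows from the Poincar\'{e}--Birkhoff--Witt theorem, since the associated graded with respect to the PBW filtration is the classical Koszul resolution of the symmetric algebra $R[e_1,\ldots,e_m]$ over $R$, which is acyclic.

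Third, I would verify that $\widetilde{as}_\bullet$ is a chain map, i.e.\ that $\partial_{u_h}\circ\widetilde{as}_n=\widetilde{as}_{n-1}\circ d^{n-1}$. This is the main combinatorial step: expanding $\partial_{u_h}(1\otimes as_n(e_{i_1}\wedge\cdots\wedge e_{i_n}))$ produces the alternating sum over permutations $\alpha\in S_n$ of the internal products $e_{i_{\alpha(k)}}\cdot e_{i_{\alpha(k+1)}}$ together with the face terms inserting a factor of $1$. Pairing each term with $\alpha(k)\leftrightarrow\alpha(k+1)$ and using the relation $e_ie_j-e_je_i=[e_i,e_j]$ inside $\mathcal{U}(\mathfrak{g})$, the symmetric parts cancel in pairs while the commutator parts reassemble into $\widetilde{as}_{n-1}$ applied to the bracket terms of the Koszul differential $d^{n-1}$; the multiplicative terms of $d^{n-1}$ come directly from the extreme face maps $\partial_n^0$ and $\partial_n^{n+1}$. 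This is essentially the content of \cite[Chap.~XIII.7, Theorem 7.1]{cartan}.

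Finally, once $\widetilde{as}_\bullet$ is a chain map of free $\mathcal{U}(\mathfrak{g})$-resolutions of $R$ lifting the identity on $R$, the comparison theorem guarantees that it is a chain homotopy equivalence. Applying the additive functor $\Hom_{\mathcal{U}(\mathfrak{g})}(-,R)$ preserves chain homotopies, so the induced map $\nu\colon \Hom_{\mathcal{U}(\mathfrak{g})}(U_h^\bullet,R)\to \Hom_{\mathcal{U}(\mathfrak{g})}(V(\mathfrak{g})^\bullet,R)$ is itself a homotopy equivalence, and hence a quasi-isomorphism. The hard part is the chain-map verification in step three, where the Lie bracket relations of $\mathfrak{g}$ must be matched against the bar differential after a careful bookkeeping of the signs produced by the antisymmetrization.
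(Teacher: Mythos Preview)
The paper itself gives no proof of this proposition; it is simply quoted from \cite[Chap.~XIII.7, Theorem~7.1]{cartan}. Your proposal is the standard argument from that reference: both complexes are free $\mathcal{U}(\mathfrak{g})$-resolutions of $R$, the antisymmetrization is a chain map over the identity, and the comparison theorem finishes. So in spirit you are exactly reproducing what the paper defers to Cartan--Eilenberg.

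There is one notational slip worth fixing. In step three you write the compatibility as $\partial_{u_h}\circ\widetilde{as}_n=\widetilde{as}_{n-1}\circ d^{n-1}$, but $\partial_{u_h}$ is the \emph{cochain} differential (raising degree) on $\Hom_{\mathcal{U}(\mathfrak{g})}(U_h^\bullet,R)$, whereas $\widetilde{as}_\bullet$ and $d^{n-1}$ live at the chain level. What you need is the chain-level bar differential $b_n:\mathcal{U}(\mathfrak{g})^{\otimes n+1}\to\mathcal{U}(\mathfrak{g})^{\otimes n}$, $b_n(u_0\otimes\cdots\otimes u_n)=\sum_{i=0}^{n-1}(-1)^i u_0\otimes\cdots\otimes u_iu_{i+1}\otimes\cdots\otimes u_n$, and the identity $b_n\circ\widetilde{as}_n=\widetilde{as}_{n-1}\circ d^{n-1}$. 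Relatedly, at this chain level there are no ``extreme face maps inserting a factor of $1$'': the multiplicative summands $ue_{i_j}\otimes\cdots$ of the Koszul differential arise from the face $u_0u_1\otimes\cdots$ (multiplication in the first two tensor slots), not from degeneracy-type insertions. Once you correct this bookkeeping the combinatorial verification goes through as you indicate, and the rest of your argument is fine.
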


\begin{prop} \cite[Chap. IV.3-6]{knapp} \label{k2prop}
Let $(\V^\bullet, d)$ be the Koszul complex defined above. Then the map $\kappa: \Hom_{\mathcal{U}(\mathfrak{g})}(V(\mathfrak{g})^\bullet, R) \rightarrow C^\bullet(\g,R)$ given by \bes \kappa^n: \Hom_{\mathcal{U}(\mathfrak{g})}(V_n(\mathfrak{g}), R) &\rightarrow&  \Hom_{R}(\bigwedge^n\g,R) \\ f &\mapsto& [ (e_{i_1} \wedge \cdots \wedge e_{i_n}) \mapsto f(1 \otimes e_{i_1} \wedge \ldots \wedge e_{i_n})] \ees is an isomorphism of complexes.
\end{prop}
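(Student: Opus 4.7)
The plan is to first exhibit $\kappa^n$ as an $R$-linear bijection for each $n$, then verify commutativity with the differentials. For the bijection step, the key point is that $V_n(\mathfrak{g}) = \mathcal{U}(\mathfrak{g}) \otimes_R \bigwedge^n \mathfrak{g}$ is a free $\mathcal{U}(\mathfrak{g})$-module with basis $\{1 \otimes e_{i_1} \wedge \cdots \wedge e_{i_n} : i_1 < \cdots < i_n\}$. By the universal property of free modules, any $R$-linear $\omega : \bigwedge^n \mathfrak{g} \to R$ extends uniquely to a $\mathcal{U}(\mathfrak{g})$-linear map $\tilde\omega: V_n(\mathfrak{g}) \to R$ with $\tilde\omega(1 \otimes x) = \omega(x)$; explicitly, $\tilde\omega(u \otimes x) = u.\omega(x) = \epsilon(u)\omega(x)$, where one uses the trivial $\mathfrak{g}$-action on $R$ and the counit $\epsilon$ of $\mathcal{U}(\mathfrak{g})$ from Proposition \ref{hug}. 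The assignment $\omega \mapsto \tilde\omega$ is the two-sided inverse of $\kappa^n$.

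Next I would check that $\kappa^{n+1} \circ d^{*\,n} = \partial'^{\,n} \circ \kappa^n$, where $d^{*\,n}$ denotes precomposition with the Koszul differential $d^n : V_{n+1}(\mathfrak{g}) \to V_n(\mathfrak{g})$. Applying $d^{*\,n}f$ to $1 \otimes e_{i_1} \wedge \cdots \wedge e_{i_{n+1}}$ and expanding via the formula in Definition \ref{kc} produces two sums: a commutator sum $\sum_{k<l}(-1)^{k+l} f\bigl(1 \otimes ([e_{i_k},e_{i_l}] \wedge e_{i_1} \wedge \cdots \wedge e_{i_{n+1}})_{k,l}\bigr)$ and a derivation-type sum $\sum_j (-1)^{j+1} f\bigl(e_{i_j} \otimes (e_{i_1} \wedge \cdots \wedge e_{i_{n+1}})_j\bigr)$. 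Since $\mathfrak{g}$ acts trivially on $R$, one has $f(e_{i_j} \otimes y) = e_{i_j}.f(1 \otimes y) = \epsilon(e_{i_j}) f(1 \otimes y) = 0$, so the second sum vanishes identically. The surviving first sum is exactly $\partial'^{\,n}(\kappa^n f)(e_{i_1} \wedge \cdots \wedge e_{i_{n+1}})$ per Definition \ref{lac}, which gives the required compatibility.

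The principal observation to exploit — rather than any real obstacle — is that the trivial $\mathfrak{g}$-action on the coefficient module $R$ annihilates the derivation part of the Koszul differential after Hom-ing into $R$, leaving only the commutator terms that define the Chevalley-Eilenberg differential $\partial'$. Once this is noted, the remainder of the argument is purely bookkeeping on the signs and indices, and no further machinery (spectral sequences, chain homotopies, comparison with a second resolution, etc.) is needed.
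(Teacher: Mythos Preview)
Your proposal is correct and is precisely the standard direct verification: the freeness of $V_n(\mathfrak{g})$ over $\mathcal{U}(\mathfrak{g})$ gives the bijection, and the trivial $\mathfrak{g}$-action on $R$ kills the derivation-type summand of the Koszul differential, leaving exactly the Chevalley--Eilenberg formula. The paper does not supply its own proof of this proposition but simply cites \cite[Chap.~IV.3--6]{knapp}, so there is nothing to compare; your argument is essentially what one finds in that reference.
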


\section{Isomorphism of complete Hopf algebras} \label{chapisobialg} \label{sechom}
Let $R$ be an integral domain of characteristic zero and let $G$ be an $m$-dimensional formal group law and $\OG$ the ring of functions to $G$ (see Definition \ref{law}). Let $\g$ be the associated free $m$-dimensional Lie algebra over $R$ to $G$ which is free as an $R$-module with basis $e_1, \ldots , e_m$ (see description around (\ref{lb})) and let $\U*$ be the linear dual of the universal enveloping algebra of $\g$. This section describes the desired map from $\OG$ to $\U*$ as a composition of maps $\OG \rightarrow \D^\star \rightarrow \U*$, where $\D^\star$ is the linear dual of $\D$ and the latter map is defined by dualizing the map $\mathcal{U}(\mathfrak{g}) \rightarrow \D$. 

\begin{prop} \label{als} Let $G$ be an $m$-dimensional formal group law, $\OG$ the ring of functions to $G$, $\g$ the associated free $m$-dimensional Lie algebra over $R$ to $G$ which is free as an $R$-module with basis $e_1, \ldots , e_m$. Let $\Ug$ be the universal enveloping algebra of $\g$ and $\U*$ the linear dual of $\Ug$. There are natural homomorphisms of (complete) Hopf algebras $$ \al: \mathcal{U}(\mathfrak{g}) \rightarrow \D \ \mbox{and} \ \als : \OG \rightarrow \U*$$ induced by the pairing \begin{eqnarray*}
\g \otimes \OG &\rightarrow& R \\
L \otimes f &\mapsto& L(f),
\end{eqnarray*} 
where $L(f)$ was defined by $L(f)= \sum_{j=1}^m r_j \frac{\partial f}{\partial t_1^{(j)}}(0)$ if $L=\sum_{j=1}^m r_i e_j \in \g$, see Equation (\ref{lf}) in Section \ref{sec1}. % (recall that the basis elements $e_j$ of $\g$ were defined as the $j$-th partial derivative $\frac{\partial}{\partial t_1^{(j)}}$ evaluated at $0$).
\end{prop}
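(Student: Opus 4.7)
The plan is to construct $\beta$ first and then obtain $\beta^\star$ by duality, using the natural evaluation pairing $\mathcal{O}(G) \to \mathcal{D}^\star$ (which is even an isomorphism since $\mathcal{O}(G)$ is complete with respect to the filtration of Lemma \ref{filtog}). To start, I would send an element $L = \sum r_j e_j \in \mathfrak{g}$ to the linear functional $\beta(L) : f \mapsto L(f) = \sum r_j \frac{\partial f}{\partial t_1^{(j)}}(0)$. Since $L(f)$ only depends on the linear part of $f$, this functional vanishes on $F^2\mathcal{O}(G)$, hence lies in $\mathcal{D} = \mathcal{O}(G)^\circ$. Dualizing the comultiplication $\mu$ of $\mathcal{O}(G)$, whose linear part is $t^{(i)} \mapsto t_1^{(i)} + t_2^{(i)}$ (by condition (1) of Definition \ref{law}), one sees immediately that $\beta(L)$ is primitive in $\mathcal{D}$, i.e.\ $\mu_{\mathcal{D}}(\beta(L)) = \beta(L) \otimes 1 + 1 \otimes \beta(L)$.

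The crucial step is to check that $\beta$ respects the Lie bracket, so that by the universal property of $\mathcal{U}(\mathfrak{g})$ it extends to an algebra homomorphism $\beta : \mathcal{U}(\mathfrak{g}) \to \mathcal{D}$. For this I compute both sides on a pair of basis vectors $e_l, e_k$. The multiplication in $\mathcal{D}$ is the dual of $\mu$, so $\beta(e_l)\beta(e_k)$ evaluated on $f \in \mathcal{O}(G)$ is obtained by applying $\beta(e_l) \otimes \beta(e_k)$ to $\mu(f)$. Testing against $f = t^{(j)}$ and using
\[
\mu(t^{(j)}) = t_1^{(j)} + t_2^{(j)} + \sum_{l,k} \gamma_{lk}^j t_1^{(l)} t_2^{(k)} + O(d \geq 3)
\]
gives $\beta(e_l)\beta(e_k)(t^{(j)}) = \gamma_{lk}^j$, hence $[\beta(e_l),\beta(e_k)](t^{(j)}) = \gamma_{lk}^j - \gamma_{kl}^j$, which by (\ref{lb}) is precisely $\beta([e_l,e_k])(t^{(j)})$. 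A parallel check on higher monomials $t^{(j_1)} \cdots t^{(j_r)}$ shows that both sides vanish for $r \geq 2$, since $\beta(e_l)\beta(e_k)$ already annihilates $F^3\mathcal{O}(G)$; this verifies equality in $\mathcal{D}$.

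Once $\beta$ is established as an algebra map, checking the Hopf algebra compatibilities is formal. Compatibility with the counit $\epsilon$ amounts to $\beta(1) = \epsilon_{\mathcal{O}(G)}$, and compatibility with $\mu$ follows because the primitive elements of $\mathcal{D}$ form a sub-Lie-algebra mapped to $\mathfrak{g}$ generates $\mathcal{U}(\mathfrak{g})$; the antipode compatibility is then automatic from the Hopf algebra axioms. Finally, define $\beta^\star : \mathcal{O}(G) \to \mathcal{U}^\star$ as the composition $\mathcal{O}(G) \hookrightarrow \mathcal{D}^\star \xrightarrow{\beta^\vee} \mathcal{U}^\star$, i.e.\ $\beta^\star(f)(x) = \beta(x)(f)$. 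Since linear duality sends Hopf algebra morphisms to Hopf algebra morphisms and $\mathcal{O}(G) \to \mathcal{D}^\star$ is the canonical Hopf pairing, $\beta^\star$ is a morphism of complete Hopf algebras.

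The main obstacle is the bracket verification above: everything else is formal Hopf algebra yoga, but the equality $[\beta(e_l),\beta(e_k)] = \beta([e_l,e_k])$ is the unique place where the specific structure of the formal group law $G$ (the coefficients $\gamma_{lk}^j$) enters and must be matched against the Lie bracket formula (\ref{lb}).
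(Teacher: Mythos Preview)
Your approach is essentially the standard one (and is what the references \cite[Chap.~VII.37.4]{haze} and \cite[Chap.~V.6]{serrelie} cited in the paper actually do): the paper itself gives no argument beyond those citations, so your explicit verification is more detailed than the paper's own proof. Two small points nonetheless deserve correction.

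First, your justification of primitivity is misattributed. The comultiplication $\mu_{\mathcal{D}}$ on $\mathcal{D}$ is dual to the \emph{multiplication} on $\mathcal{O}(G)$, not to its comultiplication; the linear part of the formal group law is irrelevant here. Primitivity of $\beta(L)$ follows instead from the Leibniz rule: $\mu_{\mathcal{D}}(\beta(L))(f\otimes g)=L(fg)=L(f)g(0)+f(0)L(g)=(\beta(L)\otimes 1+1\otimes\beta(L))(f\otimes g)$.

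Second, in the bracket check you claim that both sides vanish on monomials of degree $r\geq 2$ ``since $\beta(e_l)\beta(e_k)$ already annihilates $F^3\mathcal{O}(G)$''. That reasoning covers only $r\geq 3$. For $r=2$ the product $\beta(e_l)\beta(e_k)$ does \emph{not} vanish on $t^{(j_1)}t^{(j_2)}$: one computes $\beta(e_l)\beta(e_k)(t^{(j_1)}t^{(j_2)})=\delta_{l,j_1}\delta_{k,j_2}+\delta_{l,j_2}\delta_{k,j_1}$, which is symmetric in $(l,k)$, so the \emph{commutator} vanishes. With this symmetry observation added, the bracket verification is complete and the rest of your argument goes through.
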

\begin{proof}
The existence of the map $\mathcal{U}(\mathfrak{g}) \rightarrow \D$ can be found in \cite[Chap. VII.37.4]{haze} or \cite[Chap.V.6]{serrelie}. The proof that the dual of this map is a morphism of complete Hopf algebras can be deduced from \cite[Chap. VII.37.4]{haze} or \cite[Chap. V.6, THm.2]{serrelie}.
\end{proof}

\begin{notation} \label{notphi}  We will denote the image of $e_i$ under the map 
\begin{eqnarray*}
\g &\xrightarrow{\gamma}& \D \\
L &\mapsto& [f \mapsto L(f)]
\end{eqnarray*} 
by $\phi^{(i)}$, to that we have $\phi^{(i)}(f)= e_i(f)=\frac{\partial f}{\partial t_1^{(i)}}(0) $. The image of $e_i^k$ of $\mathcal{U}(\mathfrak{g})$ under the map $\al$ is therefore denoted by $(\phi^{(i)})^k$, where the multiplication is given by the Hopf algebra structure on $\D$. For $(\phi^{(i)})^0$ one has that $(\phi^{(i)})^0(f)=f(0)$.
\end{notation}

\begin{defn} \label{mO}
Let $Q(R)$ be the quotient field of $R$. To each $m$-dimensional formal group law $G$ we associate an extension of the ring of functions $\OG$ called the \textit{modified ring of functions} $\tOG$ which is defined by $$\tOG:=\left\lbrace  \sum_{\underline{j}} b_{\underline{j}} \underline{t}^{\underline{j}}  \in Q(R)[[\underline{t}]] \ | \ \underline{j}! b_{\underline{j}} \in R \right\rbrace.$$
\end{defn}

\begin{rem}
Note that the modified ring $\tOG$ is actually a ring and that $\OG \subset \tOG$. \end{rem}

\begin{lemma} The modified ring of functions $\tOG$ is a complete ring with respect to the topology induced by the following descending filtration  \begin{eqnarray} \label{filo}
F^i\tOG=\left\lbrace f \in \tOG | \mbox{all monomials of} \ f \ \mbox{have a total degree} \geq i \right\rbrace,\end{eqnarray} for all $i \in \mathbb{N}$. \end{lemma}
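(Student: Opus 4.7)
The plan is to reduce the assertion to the well-known completeness of the full power series ring $Q(R)[[\underline{t}]]$ with respect to its total-degree filtration, and then verify that $\tOG$ sits inside as a closed subspace. First I would check the preliminaries that make the statement meaningful: each $F^i\tOG$ is an additive subgroup of $\tOG$ (in fact an ideal, since the product of monomials of total degree $\geq i$ and $\geq j$ has total degree $\geq i+j$), and $\bigcap_{i\geq 0} F^i\tOG = 0$, so the induced topology is Hausdorff. Moreover the inclusion $\tOG \hookrightarrow Q(R)[[\underline{t}]]$ is a topological embedding, because the filtration $F^i\tOG$ is precisely the restriction to $\tOG$ of the analogous total-degree filtration on $Q(R)[[\underline{t}]]$.

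Given this, let $(f_n)_{n\in\mathbb{N}}$ be a Cauchy sequence in $\tOG$ and write $f_n = \sum_{\underline{j}} b_{\underline{j},n} \underline{t}^{\underline{j}}$. Because the filtration is by total degree, the Cauchy condition says that for each fixed multi-index $\underline{j}$ the sequence of coefficients $b_{\underline{j},n} \in Q(R)$ is eventually constant; denote the eventual value by $b_{\underline{j}}$. The formal sum $f = \sum_{\underline{j}} b_{\underline{j}} \underline{t}^{\underline{j}}$ lies in $Q(R)[[\underline{t}]]$ and is visibly the limit of $(f_n)$ in $Q(R)[[\underline{t}]]$.

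The only thing left, and the point where the definition of $\tOG$ really enters, is to show that this limit $f$ actually belongs to $\tOG$, i.e.\ satisfies $\underline{j}!\,b_{\underline{j}} \in R$ for every $\underline{j}$. But this is immediate: fix $\underline{j}$ and choose $n$ large enough that $b_{\underline{j},n} = b_{\underline{j}}$; since $f_n \in \tOG$ we have $\underline{j}!\,b_{\underline{j},n} \in R$, hence $\underline{j}!\,b_{\underline{j}} \in R$.

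I do not expect any genuine obstacle here. The lemma is formal bookkeeping: the defining condition $\underline{j}!\,b_{\underline{j}} \in R$ of $\tOG$ is checked monomial by monomial, and the total-degree filtration already stabilises each coefficient after finitely many steps, so the modified condition is automatically preserved under taking limits. The bulk of the write-up is therefore just making the reduction to $Q(R)[[\underline{t}]]$ explicit.
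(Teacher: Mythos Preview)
Your argument is correct and complete. The paper itself states this lemma without proof (just as it does for the analogous Lemma~\ref{filtog} on $\OG$), treating it as routine, so there is no approach to compare against; your write-up simply supplies the details the paper leaves implicit, namely that the monomial-by-monomial condition defining $\tOG$ is preserved under limits in the total-degree filtration.
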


\begin{prop} \label{togha}
Let $G$ be an $m$-dimensional formal group law and $\tOG$ the associated modified ring of Definition \ref{mO}. Then $\tOG$ carries a complete Hopf algebra structure $(\tOG,\mathcal 5, \eta, \mu, \epsilon, s)$.
\end{prop}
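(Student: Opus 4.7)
The plan is to extend each of the structure maps from Proposition \ref{hog} to $\tOG$ by exactly the same formula. Viewing $\OG \subset \tOG \subset Q(R)[[\underline{t}]]$, all the formulas for $\mathcal{5}, \eta, \mu, \epsilon, s$ make sense on the ambient ring $Q(R)[[\underline{t}]]$, which is itself a complete Hopf algebra over $Q(R)$ by the very same formulas. Consequently the Hopf algebra axioms (associativity, coassociativity, counit and antipode compatibilities) are automatic, and the entire task reduces to checking that each of the structure maps restricts appropriately---sending $\tOG$ (resp.\ $\tOG \hat{\otimes} \tOG$) back into $\tOG \hat{\otimes} \tOG$ (resp.\ $\tOG$). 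Under the filtration analogous to (\ref{filo}), one identifies $\tOG \hat{\otimes} \tOG$ with those $\sum b_{\underline{k},\underline{l}} \underline{t}_1^{\underline{k}} \underline{t}_2^{\underline{l}} \in Q(R)[[\underline{t}_1, \underline{t}_2]]$ for which $\underline{k}!\,\underline{l}!\,b_{\underline{k},\underline{l}} \in R$.

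The multiplication is handled directly. For $f = \sum b_{\underline{j}} \underline{t}^{\underline{j}}$ and $g = \sum c_{\underline{j}} \underline{t}^{\underline{j}}$ in $\tOG$, the coefficient of $\underline{t}^{\underline{j}}$ in $f g$ is $\sum_{\underline{k}+\underline{l}=\underline{j}} b_{\underline{k}} c_{\underline{l}}$, and using $\underline{j}! = \underline{k}!\,\underline{l}!\binom{\underline{j}}{\underline{k}}$ one gets $\underline{j}!\sum_{\underline{k}+\underline{l}=\underline{j}} b_{\underline{k}} c_{\underline{l}} = \sum \binom{\underline{j}}{\underline{k}} (\underline{k}!\,b_{\underline{k}})(\underline{l}!\,c_{\underline{l}}) \in R$. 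The unit $\eta: 1 \mapsto 1$ and counit $\epsilon: f \mapsto b_{\underline{0}}$ (with $\underline{0}! = 1$, so $b_{\underline{0}} \in R$) are trivially well-defined, and continuity of all of these maps with respect to the filtration is immediate.

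The substantive step concerns $\mu$ and $s$, both of which have the form $f \mapsto f \circ h$ where $h = (h^{(1)}, \ldots, h^{(m)})$ satisfies $h^{(i)} \in R[[\ldots]]$ with $h^{(i)}(0) = 0$---namely $h = G$ for $\mu$, and $h$ the antipode series of $G$ (see Remark \ref{exs}) for $s$. The condition $\underline{j}!\,b_{\underline{j}} \in R$ is equivalent to saying that the iterated partial derivative $\partial^{\underline{j}} f(0)$ lies in $R$ for every $\underline{j}$. The multivariable chain rule (Fa\`a di Bruno formula) expresses each iterated partial derivative of $f \circ h$ at $0$ as a polynomial with \emph{integer} coefficients in the iterated partial derivatives of $f$ and of the components of $h$ at $0$; since both sets of derivatives lie in $R$ by hypothesis, so do those of $f \circ h$. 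This gives precisely $\mu(f) \in \tOG \hat{\otimes} \tOG$ and $s(f) \in \tOG$, with continuity immediate from $h(0) = 0$. I expect the main obstacle to be the bookkeeping in this chain rule argument; alternatively one could proceed by a direct induction on the filtration degree, expressing the coefficients of $f \circ h$ inductively in terms of those of $f$ and $h$, but the chain rule viewpoint should be the cleanest.
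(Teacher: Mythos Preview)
Your proposal is correct and follows the same overall strategy as the paper: extend the structure maps of Proposition~\ref{hog} to $\tOG$ via the same formulas, observe that the Hopf algebra identities hold automatically in the ambient $Q(R)[[\underline{t}]]$, and reduce the problem to checking that $\epsilon$, $\mu$, and $s$ land in the right targets. The paper's proof is in fact quite terse---it simply lists the conditions (i) $\epsilon(f)\in R$, (ii) $\mu(f)\in\tOG\hat{\otimes}\tOG$, (iii) $s(f)\in\tOG$, and asserts they follow from the definitions together with Remark~\ref{exs}.

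Your contribution is a concrete mechanism for (ii) and (iii): the reformulation of the condition $\underline{j}!\,b_{\underline{j}}\in R$ as integrality of all iterated partial derivatives at $0$, combined with the Fa\`a di Bruno formula, handles both cases uniformly as composition with an $R$-integral power series vanishing at the origin. This is slightly more general than what the paper seems to have in mind (which hints at using the specific shape of $G$ and the fact that $s(\textbf{X})\equiv -\textbf{X}$ mod degree $2$), and it buys you a clean one-line justification rather than a case-by-case induction on filtration degree. Either route works; yours is arguably tidier.
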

\begin{proof}
We claim that the morphisms $\mathcal 5, \eta, \mu, \epsilon$ and $s$ of the complete Hopf algebra structure of $\OG$ of Proposition \ref{hog} can be taken to get a complete Hopf algebra structure on $\tOG$. To see this we have to verify the divisibility conditions, i.e. we have to check that if $f=\sum_{\underline{j}} b_{\underline{j}} \underline{t}^{\underline{j}}$ is an element of $\tOG$, then 
\begin{itemize}
\item[(i)] $\epsilon(f) \in R$
\item[(ii)] $\mu(f) \in \tOG \hat{\otimes} \tOG$
\item[(iii)] $s(f) \in \tOG$, \end{itemize} which can be deduced from the definition of the modified ring of functions $\tOG$, the definition of the fromal group law $G$ and the definition of the anipode $s$ together with Remark \ref{exs}.
\end{proof}

\begin{prop} \label{tals}
The map $\als$ of Proposition \ref{als} extends to a homomorphism $\tals: \tOG \rightarrow \U*$ of complete Hopf algebras.
\end{prop}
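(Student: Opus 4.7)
The plan is to imitate verbatim the construction of $\als$ from Proposition \ref{als}, with $\OG$ replaced by $\tOG$. Since by Proposition \ref{togha} the modified ring $\tOG$ is a complete Hopf algebra whose structure maps $(\mathcal 5, \eta, \mu, \epsilon, s)$ are given by literally the same formulas as those of $\OG$, the pairing $\g \otimes \tOG \to R$, $L \otimes f \mapsto \sum_j r_j \tfrac{\partial f}{\partial t_1^{(j)}}(0)$ for $L = \sum r_j e_j$, is still well-defined on $\tOG$: for $f = \sum b_{\underline{k}}\underline{t}^{\underline{k}} \in \tOG$ the linear coefficients $b_{\underline{e}_j}$ lie automatically in $R$, since $\underline{e}_j! = 1$ in the divisibility condition defining $\tOG$. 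Following the construction of Proposition \ref{als} verbatim therefore produces an algebra homomorphism $\tilde{\al} : \Ug \to \tOG^\circ$ into the continuous dual of $\tOG$, and then $\tals : \tOG \to \U*$ is defined by $\tals(f)(u) := \tilde{\al}(u)(f)$. Naturality gives $\tilde{\al}(u)|_{\OG} = \al(u)$, so $\tals|_{\OG} = \als$ and the extension property is automatic.

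The only non-formal step is to verify that $\tals(f)(u)$ takes values in $R$ rather than merely in the fraction field $Q(R)$. For a PBW basis element $u = \underline{e}^{\underline{j}} = e_1^{j_1}\cdots e_m^{j_m}$, the Hopf algebra recipe and Notation \ref{notphi} give
\[
\tilde{\al}(\underline{e}^{\underline{j}})(f) = \bigl((\phi^{(1)})^{\otimes j_1} \otimes \cdots \otimes (\phi^{(m)})^{\otimes j_m}\bigr)\bigl(\mu^{[|\underline{j}|]}(f)\bigr),
\]
where $\mu^{[n]}$ denotes the $n$-fold iterated comultiplication, with values in $\tOG^{\hat{\otimes}n}$. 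By iterating Proposition \ref{togha}, an element of $\tOG^{\hat{\otimes}n}$ is a series $\sum c_{\underline{k}_1, \ldots, \underline{k}_n} \underline{t}_1^{\underline{k}_1}\cdots\underline{t}_n^{\underline{k}_n}$ subject to $\underline{k}_1!\cdots\underline{k}_n!\, c \in R$. The tensor product of partials extracts precisely the single coefficient in which every $\underline{k}_s$ is some standard vector $\underline{e}_{l(s)}$, so every $\underline{k}_s! = 1$ and the coefficient automatically lies in $R$. Continuity with respect to the filtrations of Proposition \ref{togha} and (\ref{filu}) follows at once because $\tilde{\al}(\underline{e}^{\underline{j}})$ factors through $\tOG/F^{|\underline{j}|+1}\tOG$, and compatibility with each structure map $(\mathcal 5, \eta, \mu, \epsilon, s)$ is formal once $\tilde{\al}$ is known to be a Hopf algebra homomorphism, by dualizing as in \cite[Chap.VII.37.4]{haze} and \cite[Chap.V.6]{serrelie}.

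The main obstacle is exactly this integrality step. Without the finely calibrated divisibility condition of Definition \ref{mO}, one might fear that iterated applications of $\mu$ to elements of $\tOG$ could introduce rational coefficients with denominators that are not cleared by the operators $\phi^{(i)}$. The point is that the $\phi^{(i)}$ are, by their very construction, projectors onto the degree-one-in-each-variable part, which is precisely where every factorial denominator in $\tOG^{\hat{\otimes}n}$ is trivial. This is the raison d'\^etre of the definition of $\tOG$.
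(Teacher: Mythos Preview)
Your proposal is correct and follows the same route as the paper: extend the pairing $\g \otimes \OG \to R$ to $\g \otimes \tOG \to R$ by observing that the linear coefficients of any $f \in \tOG$ already lie in $R$, and then invoke the construction of Proposition~\ref{als} verbatim. The paper's proof stops there, since once $\gamma:\g \to \tOG^{\circ}$ lands in the $R$-algebra $\tOG^{\circ}$ (whose algebra structure is guaranteed by Proposition~\ref{togha}), the universal property of $\Ug$ yields $\tilde{\al}:\Ug \to \tOG^{\circ}$ automatically; your explicit unwinding of $\tilde{\al}(\underline{e}^{\underline{j}})(f)$ via iterated comultiplication and the observation that all relevant factorials equal $1$ is a correct, more hands-on verification of the same fact.
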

\begin{proof} Consider the pairing \begin{eqnarray*}
\g \otimes \OG &\rightarrow& R \\
L \otimes f &\mapsto& L(f)
\end{eqnarray*} which induced the homomorphism $\als: \OG \rightarrow \U*$ of complete Hopf algebras of Proposition \ref{als}. This pairing can be extended to a pairing \begin{eqnarray*}
\g \otimes \tOG &\rightarrow& R \\
L \otimes f &\mapsto& L(f)
\end{eqnarray*} since $L(f) \in R$ even if $f \in \tOG$, because the linear terms of $f$ still lie in $R$. With these modifications, the proof of Proposition \ref{als} still goes through.
\end{proof}

\begin{prop}  \label{gen} Let $G$ be an $m$-dimensional formal group law over $R$. Let $\tOG$ be the modified ring of functions (Definition \ref{mO}). Then the map \hbox{$\tals : \tOG \rightarrow \U*$}, defined by Proposition \ref{tals}, is an isomorphism of complete Hopf algebras.
\end{prop}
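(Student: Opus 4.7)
Since $\tals$ is already a homomorphism of complete Hopf algebras by Proposition \ref{tals}, the content of the proposition reduces to showing that $\tals$ is bijective. The plan is to verify filtration compatibility and to show that the induced morphism on associated graded objects is an isomorphism; the standard lemma that a filtered morphism between complete separated filtered modules inducing an isomorphism on graded pieces is itself an isomorphism would then yield the result.

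The key computation would be the evaluation of the pairing $\langle \underline{e}^{\underline{k}}, \underline{t}^{\underline{j}} \rangle$. By Hopf duality this equals the coefficient of $t_1^{(i_1)} \cdots t_n^{(i_n)}$ in the iterated comultiplication $\mu^{(n)}(\underline{t}^{\underline{j}})$, where $n = |\underline{k}|$ and $(i_1, \ldots, i_n)$ encodes the PBW monomial $\underline{e}^{\underline{k}}$ (with $k_\alpha$ copies of the index $\alpha$). Since $\mu$ is an algebra map and $\mu(t^{(\alpha)}) = G^{(\alpha)}(t_1, t_2) = t_1^{(\alpha)} + t_2^{(\alpha)}$ plus terms of total degree at least two, the formal series $\mu^{(n)}(\underline{t}^{\underline{j}})$ has total degree at least $|\underline{j}|$, so the above coefficient vanishes whenever $|\underline{k}| < |\underline{j}|$. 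This gives $\tals(F^i \tOG) \subseteq \mathcal{F}^i \U*$.

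For $|\underline{k}| = |\underline{j}| = n$, only the degree-$n$ part of $\mu^{(n)}(\underline{t}^{\underline{j}})$ contributes, namely
\begin{align*}
\prod_{\alpha=1}^m \bigl(t_1^{(\alpha)} + \cdots + t_n^{(\alpha)}\bigr)^{j_\alpha}.
\end{align*}
A direct multinomial count gives the coefficient of $t_1^{(i_1)} \cdots t_n^{(i_n)}$ as $\underline{j}!$ exactly when the multiset $(i_1, \ldots, i_n)$ realises the multi-index $\underline{j}$, and zero otherwise. This would yield
\begin{align*}
\tals(\underline{t}^{\underline{j}}) \equiv \underline{j}!\, d^{\underline{j}}\underline{t} \pmod{\mathcal{F}^{n+1}\U*},
\end{align*}
so that $\gr^n(\tals)$ sends the class of $\underline{t}^{\underline{j}}$ to $\underline{j}!\,d^{\underline{j}}\underline{t}$. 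The defining divisibility condition $\underline{j}!\,b_{\underline{j}} \in R$ on elements of $\tOG$ matches exactly the integrality of the corresponding coefficients of $d^{\underline{j}}\underline{t}$ in $\U*$, so that the substitution $c_{\underline{j}} = \underline{j}!\,b_{\underline{j}}$ identifies the two graded pieces and $\gr^n(\tals)$ is an $R$-module isomorphism.

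Since $\tOG$ and $\U*$ are both complete and separated for their filtrations, these degreewise isomorphisms would assemble into a bijection $\tals: \tOG \to \U*$, finishing the proof. The hard part is the combinatorial leading-term computation, together with the observation that the higher-order part of the formal group law $G$ only affects strictly higher filtration levels and hence cannot perturb the degree-$n$ coefficient computed above.
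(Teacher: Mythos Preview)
Your proposal is correct and follows the same overall strategy as the paper: show that $\tals$ respects the filtrations, compute the leading term $\tals(\underline{t}^{\underline{j}}) \equiv \underline{j}!\, d^{\underline{j}}\underline{t}$ modulo higher filtration, deduce that $\gr(\tals)$ is an isomorphism (the divisibility condition in the definition of $\tOG$ being exactly what is needed), and conclude by completeness.

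The one point of divergence is in how the leading term is obtained. The paper exploits that $\tals$ is an \emph{algebra} homomorphism: it first checks $\tals(t^{(l)}) \equiv dt^{(l)} \pmod{\mathcal{F}^2\U*}$ by a one-line computation, and then uses the explicit multiplication formula $d^{\underline{r}}\underline{t} \mal d^{\underline{s}}\underline{t} = \binom{\underline{r}+\underline{s}}{\underline{r}} d^{\underline{r}+\underline{s}}\underline{t}$ of Lemma~\ref{expl} to read off $\tals(\underline{t}^{\underline{j}}) \equiv \underline{j}!\, d^{\underline{j}}\underline{t}$. You instead work on the dual side, computing the pairing $\langle \underline{e}^{\underline{k}}, \underline{t}^{\underline{j}}\rangle$ directly via the iterated comultiplication $\mu^{(n)}$ on $\OG$ and a multinomial count. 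Both computations are correct and are dual to one another; the paper's route is slightly shorter because the combinatorics is packaged into Lemma~\ref{expl}, whereas your route is more self-contained and makes the filtration compatibility $\tals(F^i\tOG) \subset \mathcal{F}^i\U*$ immediate from the degree bound on $\mu^{(n)}(\underline{t}^{\underline{j}})$.
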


\begin{rem}
M. Hazewinkel shows in an analogous way in \cite[Chap. VII.37.4]{haze} that the map $\al: \Ug \rightarrow \D$ is an algebra isomorphism if $\OG$ and $\U*$ are $\mathbb{Q}$-algebras and that $\al$ respects the comultiplication and counits. 
\end{rem}

\begin{proof}[Proof of Proposition \ref{gen}]
Consider the filtrations on $\tOG$ and $\U*$ given in (\ref{filo}) and (\ref{filu}) by
\begin{eqnarray*}
F^i\tOG=\left\lbrace f \in \tOG | \mbox{all monomials of} \ f \ \mbox{have a total degree} \geq i \right\rbrace\end{eqnarray*}
and
\begin{eqnarray*} 
\mathcal{F}^i\U*=\left\lbrace \varphi \in \U* | \mbox{for all monomials} \ d^{\underline{j}}\underline{t} \ \mbox{is} \ |\underline{j}| \geq i \right\rbrace\end{eqnarray*}
for all $i \in \mathbb{N}$.
Let $\underline{t}^{\underline{j}}$ be a basis element of $F^i\tOG$, i.e. $|\underline{j}| \geq i$. We claim that $$\tals(\underline{t}^{\underline{j}}) \equiv \underline{j}! d^{\underline{j}}\underline{t} \ \mod  \mathcal{F}^{i+1}\U*.$$
To prove this we will first consider $\tals(t^{(l)})(x)$ with $x=\sum a_{\underline{k}} \underline{e}^{\underline{k}} \in \mathcal{U}(\mathfrak{g})$.
\begin{align*}
\tals(t^{(l)})(x)&= \al(\sum a_{\underline{k}} \underline{e}^{\underline{k}})(t^{(l)}) \\
&= \sum a_{\underline{k}} \al(\underline{e}^{\underline{k}})(t^{(l)}) \\
&= \sum_{|k| \geq 1} a_{\underline{k}} \al(\underline{e}^{\underline{k}})(t^{(l)}) \hspace{1cm} (\mbox{since} \ (\phi^{(k)})^0(t^{(l)})=0) \\
&= a_{\textbf{e(1)}} \phi^{(1)}(t^{(l)}) + \ldots + a_{\textbf{e(m)}} \phi^{(m)}(t^{(l)}) + \sum_{|k| \geq 2} a_{\underline{k}} \al(\underline{e}^{\underline{k}})(t^{(l)})\\
&= a_{\textbf{e(l)}} + \sum_{|k| \geq 2} a_{\underline{k}} \al(\underline{e}^{\underline{k}})(t^{(l)})  \hspace{1cm} (\mbox{since} \ \phi^{(k)}(t^{(l)})=0, \ \mbox{if} \ k\neq l).
\end{align*}
This shows that $\tals(t^{(l)}) \equiv dt^{(l)} \mod (\mathcal{F}^2\U*)$. If we look at the monomial $\underline{t}^{\underline{j}} \in F^i\tOG$ we get that $\tals(\underline{t}^{\underline{j}})$ is of the form  \begin{align*}(dt^{(1)} + \mathcal{F}^2\U*)^{j_1} \mal \cdots \mal (dt^{(m)} + \mathcal{F}^2\U*)^{j_m}, \end{align*} and due to the explicit formula for $\mal$ in Lemma \ref{expl} this means that
\begin{align*} \tals(\underline{t}^{\underline{j}}) & \equiv j_1!d^{j_1}t^{(1)} \cdots j_m!d^{j_m}t^{(m)}  \mod (\mathcal{F}^{i+1}\U*)  \\
&\equiv \underline{j}! d^{\underline{j}}\underline{t} \ \mod  (\mathcal{F}^{i+1}\U*). 
\end{align*}
Since $\tals$ is $R$-linear and continuous it follows that $\tals$ induces isomorphisms $$F^i\tOG/F^{i+1}\tOG \rightarrow \mathcal{F}^i\U*/\mathcal{F}^{i+1}\U*$$ for all  $i \in \mathbb{N}$ and hence, since $\tOG$ and $\U*$ are both complete with respect to these filtrations, that $\tals:\tOG \rightarrow \U*$ is an isomorphism of complete Hopf algebras.
\end{proof}

\section{Quasi-isomorphism of complexes}
Let $R$ be an integral domain of characteristic zero. The main purpose of this section is to show that the isomorphism $$\tals: \tOG \rightarrow \U*$$ defined in Proposition \ref{tals} extends to a quasi-isomorphism $\tphi$ of the corresponding complexes. The underlying morphism $\als: \OG \rightarrow \U*$ was already established in a paper of Huber and Kings concerning a $p$-adic analogue of the Borel regulator and the Bloch-Kato exponential map, see \cite{HK}. They showed that one can directly define a map from locally analytic group cohomology to Lie algebra cohomology by differenting cochains, and that the resulting map is Lazard's comparison isomorphism (\cite[Proposition 4.2.4]{HK}). We will give an explicit description of this quasi-isomorphism and we will see that this description coincides with the one of Huber and Kings, \cite[Definition 1.4.1]{HK}, and hence with Lazard's map.

\begin{defn} \label{mgc}
Let $G$ be a formal group law. Let $\tOG$ be the associated modified ring of functions which inherits by Proposition \ref{togha} a complete Hopf algebra structure. We denote by ${\tK}^n(G,R)$ the $n$-fold complete tensor product of $\tOG$, i.e. $${\tK}^n(G,R)={\tOG}^{\hat{\otimes}n}$$ and hence by $({\tK}^\bullet(G,R), \partial)$ the cobar complex given by Proposition \ref{hc}. The corresponding cohomology group, denoted by $H^n(\tilde{G}, R)$, is called \textit{n-th modified group cohomology of G with coefficients in R}.
\end{defn}

\begin{thm} \label{phi}
Let $R$ be an integral domain of characteristic zero and let $G$ be a formal group law over $R$. Let $(K^\bullet(G,R),\partial)$, $(\tK^\bullet(G,R),\partial)$ and $(C^\bullet(G,R),\partial')$ be the complexes defined in \ref{gc}, \ref{mgc} and \ref{lac}. Then 
\hbox{$\tals: \tOG \rightarrow \U*$}, defined in Proposition \ref{tals}, extends to a quasi-isomorphism $$ \tphi: ({\tK}^\bullet(G,R),\partial) \rightarrow (C^\bullet(\g,R),\partial')$$ given by the following composition of maps \begin{align*} ({\tK}^\bullet(G,R), \partial) \xrightarrow{\tals} (\Hom_{R}(&U^\bullet, R),\partial_u) \xrightarrow[\ref{propu}]{\iota} (\Hom_{\mathcal{U}(\mathfrak{g})}(U_h^\bullet, R),\partial_{u_h}) \\ &\xrightarrow[\ref{k1prop}]{\nu}  (\Hom_{\mathcal{U}(\mathfrak{g})}(V(\mathfrak{g})^\bullet, R), d) \xrightarrow[\ref{k2prop}]{\kappa}  (C^\bullet(\g,R),\partial'). \end{align*} 
In particular, the morphism $\als: \OG \rightarrow \U*$ extends to a morphism $$\phi: (K^\bullet(G,R),\partial) \rightarrow (C^\bullet(\g,R),\partial').$$ 
\end{thm}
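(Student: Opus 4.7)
The plan is to assemble $\tphi$ as the composition displayed in the statement and observe that each of the four constituent arrows is a (quasi-)isomorphism of complexes. Three of them---namely $\iota$, $\nu$, and $\kappa$---are already supplied by Propositions \ref{propu}, \ref{k1prop}, and \ref{k2prop}, respectively. Thus the real work is to promote the Hopf algebra isomorphism $\tals$ of Proposition \ref{gen} to an isomorphism of cobar complexes.

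First I would extend $\tals$ tensor-wise. Since $\tals \colon \tOG \to \U*$ is a continuous $R$-linear map, it induces for each $n \geq 0$ a continuous $R$-linear map $(\tals)^{\hat\otimes n} \colon \tOG^{\hat\otimes n} \to (\U*)^{\hat\otimes n}$, which under the identification of Remark \ref{remuc} lands in the space $\Hom_{R}(\Ug^{\otimes n}, R)$ of inhomogeneous $n$-cochains. Because $\tals$ is bijective by Proposition \ref{gen}, each $(\tals)^{\hat\otimes n}$ is an isomorphism of complete $R$-modules.

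Next I would verify that $(\tals)^{\hat\otimes \bullet}$ commutes with the cobar differentials of Definition and Proposition \ref{hc}. The outer face maps $\partial_n^0$ and $\partial_n^{n+1}$ insert the unit $1$, so they are automatically compatible with any unit-preserving $R$-linear map; the interior faces $\partial_n^i$ apply the comultiplication at the $i$-th slot. Because $\tals$ is a morphism of complete Hopf algebras, it intertwines units, counits, and comultiplications, so all face maps commute with the tensor powers of $\tals$, and hence so does the alternating sum $\partial$. This upgrades $(\tals)^{\hat\otimes \bullet}$ to a chain isomorphism $(\tK^\bullet(G,R), \partial) \to (\Hom_{R}(\Ug^{\otimes \bullet}, R), \partial_u)$. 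Composing with $\iota$, $\nu$, and $\kappa$ then yields the required quasi-isomorphism $\tphi$, since a composition of quasi-isomorphisms is a quasi-isomorphism.

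For the \emph{In particular} assertion at the end of the statement, the inclusion $\OG \hookrightarrow \tOG$ is a morphism of complete Hopf algebras, because the structure maps on $\tOG$ constructed in Proposition \ref{togha} are obtained by extending those on $\OG$. Restricting $\tphi$ to $K^\bullet(G,R) \subset \tK^\bullet(G,R)$ therefore defines the morphism of complexes $\phi$. The main, and essentially only, obstacle is the chain-map compatibility in the previous paragraph; this reduces to a formal check once the Hopf algebra morphism property of Proposition \ref{tals} is in hand.
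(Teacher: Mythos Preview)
Your proposal is correct and follows essentially the same route as the paper: the paper likewise reduces everything to the single observation that $\tals$, being a morphism of complete Hopf algebras (in particular a coalgebra morphism respecting units), intertwines the cobar differentials and hence induces an isomorphism of complexes $(\tK^\bullet(G,R),\partial)\to(\Hom_R(U^\bullet,R),\partial_u)$, after which one simply composes with $\iota$, $\nu$, and $\kappa$. Your treatment of the outer and inner face maps and of the final ``in particular'' clause is in fact slightly more explicit than the paper's.
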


\begin{proof}
The proof is essentially the conjunction of all our previous results. In particular, we will use the statements of Proposition \ref{gen} and \ref{propu} and of Proposition \ref{k1prop} and \ref{k2prop} concerning the Koszul complex.

Consider the following composition of maps of the theorem 
\begin{align} \label{comp} ({\tK}^\bullet(G,R), \partial) \xrightarrow{\tals} (\Hom_{R}(&U^\bullet, R),\partial_u) \xrightarrow[\ref{propu}]{\iota} (\Hom_{\mathcal{U}(\mathfrak{g})}(U_h^\bullet, R),\partial_{u_h}) \\ &\xrightarrow[\ref{k1prop}]{\nu}  (\Hom_{\mathcal{U}(\mathfrak{g})}(V(\mathfrak{g})^\bullet, R), d) \xrightarrow[\ref{k2prop}]{\kappa}  (C^\bullet(\g,R),\partial') \nonumber .\end{align} 
We will recall from Propositions \ref{propu}, \ref{k1prop} and \ref{k2prop} that the maps $\iota, \nu$ and $\kappa$ of (\ref{comp}) are (quasi-)isomorphisms, and we will show that the first map is induced by the map $\tals$ of Proposition \ref{tals} and therefore is even an isomorphism of complexes.

As a first step, recall from Proposition \ref{gen} that there exists an isomorphism $$\tals: \tOG \rightarrow \U*$$ of complete Hopf algebras. This isomorphism $\tals$ extends naturally to a morphism of the complexes $$({\tK}^\bullet(G,R),\partial) \xrightarrow{\tals} (\Hom_{R}(U^\bullet, R),\partial_u).$$ To see this, note first that we can identify $(\U*)^{\hat{\otimes}n}$ with $\Hom_{R}(\mathcal{U}(\mathfrak{g})^{\otimes n},R)$, compare Remark \ref{remuc}, and secondly that the differentials of the complexes $({\tK}^\bullet(G,R),\partial)$ and $(\Hom_{R}(U^\bullet, R),\partial_u)$ are given by the comultiplication of $\tOG$ and $\U*$. The latter means that $\tals$ commutes with these differentials, since $\als$ is in particular a coalgebra morphism.

Secondly recall Proposition \ref{propu}, which stated that there exists an isomorphism between the inhomogeneous and homogeneous complex of $\mathcal{U}(\mathfrak{g})$, hence we get  an isomorphism of complexes:
\bes(\Hom_{R}(U^\bullet, R),\partial_u) \xrightarrow{\iota} (\Hom_{\mathcal{U}(\mathfrak{g})}(U_h^\bullet, R),\partial_{u_h}). \ees 
Finally we can conclude the proof of the theorem by recalling both Propositions \ref{k1prop} and \ref{k2prop} concerning the Koszul complex, whose combined statement is that the map 
$$(\Hom_{\mathcal{U}(\mathfrak{g})}(U_h^\bullet, R),\partial_{u_h}) \rightarrow  (C^\bullet(\g,R),\partial') $$ is a quasi-isomorphism. 
\end{proof}

\begin{cor} \label{cor1}Let $R$ be an integral domain of characteristic zero and let $G$ be a formal group law over $R$. Let $\g$ be the associated Lie algebra to $G$. Then there exists an isomorphism $$\tilde{\Phi}: H^n(\tilde{G}, R) \rightarrow H^n(\g,R)$$ between the modified group cohomolgy of $G$ with coefficients in $R$ and Lie algebra cohomology of $\g$ with coefficients in $R$ given by $f \mapsto \tphi(f)$, with $\tphi(f)$ defined in Theorem \ref{phi}.
\end{cor}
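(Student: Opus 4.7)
The plan is to deduce this statement essentially as a formal consequence of Theorem \ref{phi}, which has already done the substantive work. Since $H^n(\tilde{G},R)$ is defined as the $n$-th cohomology of the complex $(\tK^\bullet(G,R),\partial)$ (Definition \ref{mgc}) and $H^n(\g,R)$ is defined as the $n$-th cohomology of $(C^\bullet(\g,R),\partial')$ (Definition and Proposition \ref{lac}), the corollary will follow immediately once we exhibit a quasi-isomorphism between these two complexes and declare $\tilde{\Phi}$ to be the map induced on cohomology.

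Concretely, I would first invoke Theorem \ref{phi} to obtain the quasi-isomorphism $\tphi: (\tK^\bullet(G,R),\partial) \to (C^\bullet(\g,R),\partial')$. Since $\tphi$ is a chain map, it sends cocycles to cocycles and coboundaries to coboundaries and therefore descends to well-defined $R$-linear maps $\tilde{\Phi}: H^n(\tilde{G},R) \to H^n(\g,R)$ with the explicit formula $[f] \mapsto [\tphi(f)]$ asserted in the statement. The quasi-isomorphism property of $\tphi$ is exactly the assertion that each of these maps is an isomorphism, which is precisely what needs to be proved.

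There is essentially no obstacle here, as all of the work has been concentrated in Theorem \ref{phi}, where the quasi-isomorphism was built as the composition $\kappa \circ \nu \circ \iota \circ \tals$ of an isomorphism of complete Hopf algebras (Proposition \ref{gen}), the homogeneous/inhomogeneous identification (Proposition \ref{propu}), the antisymmetrisation quasi-isomorphism (Proposition \ref{k1prop}), and the final isomorphism with the Chevalley--Eilenberg complex (Proposition \ref{k2prop}). The only minor point worth mentioning in the write-up is that the definition of $\tilde\Phi$ is independent of the representing cocycle, which is the routine verification that a chain map induces a well-defined map on cohomology.
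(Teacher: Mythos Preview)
Your proposal is correct and matches the paper's approach exactly: the corollary is stated without proof in the paper precisely because it is an immediate formal consequence of Theorem \ref{phi}, and your write-up makes this explicit in the standard way (a quasi-isomorphism of complexes induces isomorphisms on cohomology).
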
 

\begin{cor} \label{phiexpl} Let $R$ be an integral domain of characteristic zero and let $G$ be a formal group law over $R$. Let $f \in  K^n(G,R)$ be given by $f=f_1 \otimes \cdots \otimes f_n$ with $f_i=\sum_{\underline{j}} b_{\underline{j}}^i \underline{t}_i^{\underline{j}}$. Then the map $\phi^n: K^n(G,R) \rightarrow C^n(\g,R)$ of Theorem \ref{phi} can be described by the continuous extension of 
\begin{align*} f_1 \otimes \cdots \otimes f_n \mapsto df_1 \wedge \cdots \wedge df_n \end{align*}
for $n \geq 1$ and by the identity for $n=0$. \end{cor}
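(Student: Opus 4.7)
The plan is to exploit the fact that every map in the composition defining $\tphi$ in Theorem \ref{phi} is continuous and $R$-linear, so it suffices to verify the formula on elementary tensors $f_1 \otimes \cdots \otimes f_n \in \OG^{\hat{\otimes} n} \subset \tOG^{\hat{\otimes} n}$, and then extend by continuity to all of $K^n(G,R)$. The strategy is therefore to chase such a pure tensor through the four stages $\tals$, $\iota$, $\nu$, $\kappa$ and identify the output explicitly.

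The key calculation is the behaviour of $\tals$ on $\g \subset \Ug$. By the definition of the pairing underlying $\als$ in Proposition \ref{als}, we have $\tals(f)(e_j) = e_j(f) = \frac{\partial f}{\partial t_1^{(j)}}(0)$ for every $f \in \tOG$ and every basis element $e_j \in \g$. Hence the restriction of $\tals(f)$ to $\g$ is precisely the linear functional $df|_0 \in \g^* = C^1(\g,R)$. Under the identification $(\U*)^{\hat{\otimes}n} \cong \Hom_R(\Ug^{\otimes n}, R)$ used in Remark \ref{remuc}, the image $\tals^{\otimes n}(f_1 \otimes \cdots \otimes f_n)$ is the cochain $u_1 \otimes \cdots \otimes u_n \mapsto \prod_k \tals(f_k)(u_k)$.

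Next, I will follow the pure tensor through $\iota$ and $\nu$. Passing to the homogeneous complex via $\iota$ and then to the Koszul complex via the antisymmetrisation $\nu$ sends this cochain to the $\Ug$-linear map on $V_n(\g) = \Ug \otimes \bigwedge^n \g$ whose value on $1 \otimes e_{i_1} \wedge \cdots \wedge e_{i_n}$ is
\begin{align*}
\sum_{\sigma \in S_n} \sgn(\sigma)\, \tals(f_1)(e_{i_{\sigma(1)}}) \cdots \tals(f_n)(e_{i_{\sigma(n)}})
= \det\!\left(\frac{\partial f_k}{\partial t^{(i_l)}}(0)\right)_{k,l}.
\end{align*}
Applying $\kappa$ just restricts this to $1 \otimes \bigwedge^n \g$. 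By the standard definition of the wedge product of linear forms, this determinant is exactly $(df_1 \wedge \cdots \wedge df_n)(e_{i_1} \wedge \cdots \wedge e_{i_n})$, which proves the formula on pure tensors and hence, by continuous $R$-linear extension, on all of $K^n(G,R)$.

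The case $n=0$ is immediate: $K^0(G,R) = R = C^0(\g,R)$ and each of $\tals$, $\iota$, $\nu$, $\kappa$ acts as the identity on the ground ring. The main technical point requiring care is bookkeeping: making sure that the identification $(\U*)^{\hat{\otimes}n} \cong \Hom_R(\Ug^{\otimes n}, R)$ is compatible with the antisymmetrisation map $as_n$ of Proposition \ref{k1prop}, so that the sign conventions in the resulting determinant agree with those in the definition of $df_1 \wedge \cdots \wedge df_n$. Once this is set up, the remaining verification is formal.
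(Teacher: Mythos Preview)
Your argument is correct and is exactly the verification one would expect: the paper does not actually supply a proof of this corollary (it is stated immediately after Theorem \ref{phi} and Corollary \ref{cor1} without further justification), so your explicit chase through the composition $\kappa \circ \nu \circ \iota \circ \tals$ is precisely what the paper leaves to the reader. The only point worth tightening is the sign/normalisation bookkeeping you already flag: since the antisymmetrisation map $as_n$ in Proposition \ref{k1prop} is defined \emph{without} a $1/n!$ factor, your determinant formula matches the wedge product $df_1 \wedge \cdots \wedge df_n$ in the convention where $(\omega_1 \wedge \cdots \wedge \omega_n)(v_1 \wedge \cdots \wedge v_n) = \det(\omega_k(v_l))$, which is the convention implicit in Definition \ref{lac}.
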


\section{Locally analytic group cohomology}
This section will provide a comparison isomorphism of locally analytic group cohomology with Lie algebra cohomology for $K$-Lie groups, where $K$ is a finite extension of $\Q_p$. The main idea of the proof is its reduction to standard groups associated to formal group laws. Before stating the Comparison Theorem for standard groups \ref{thmstd} we will give some definitions and fix notation in Section \ref{intrsec}. Section \ref{seclimit} covers the so called limit morphism, a preliminary step in the proof of the Comparison Theorem. The final steps to the proof of the Comparison Theorem \ref{thmstd} will be given in Section \ref{secmthm}.

\begin{quote} Throughout this section let $K$ be a finite extension of $\Q_p$.\end{quote} 

\subsection{K-Lie groups and standard groups} \label{intrsec}
Let $|\phantom{a}|: K \rightarrow \mathbb{R}_+$ be the non-archimedean absolute value on $K$ which extends the $p$-adic absolute value on $\Q_p$ and let $v_p(x)$ denote the corresponding valuation on $K$, normalized by $v_p(p)=1$, which satisfies $|x|=p^{-v_p(x)}.$

We denote by $R$ the \textit{valuation ring} $$R=\{x \in K : \ |x| \leq 1\}=\{x \in K : \ v_p(x) \geq 0\}$$ 
and by $\mathfrak{m}$ the \textit{maximal ideal} $$\mathfrak{m}=\{x \in K : \ |x| < 1\}=\{x \in K : \ v_p(x) > 0\}$$ of $R$. 

\begin{defn} \label{locanfct} \begin{itemize} \item[(i)] Let $U \subset K^n$ be open and let $f: U\rightarrow K$ be a function. Then $f$ is called \textit{locally analytic in $U$} if for each $x \in U$ there is a ball $B_r(x):=\{y \in U \mid |y-x|<r\} \subset U$ and a formal power series $F$ such that $F$ converges in $B_r(x)$ and for $h \in B_r(x)$, $$f(h)=F(h-x),$$ compare \cite[Chap.III, 1.3.2]{lazard}.
\item[(ii)] Let $U \subset K^n$ be open and let $f=(f_1,\ldots, f_n): U\rightarrow K^n$. Then $f$ is called \textit{locally analytic in U} if $f_i$ is locally analytic for $1\leq i \leq n$.
\item[(iii)] Let $M$ be a topological space. A \textit{chart for $M$} is a triple $(U, \varphi, K^n)$ consisting of an open subset $U \subset M$ and a map $\varphi: U \rightarrow K^n$ such that $\varphi(U)$ is open in $K^n$ and $\varphi: U \xrightarrow{\simeq} \varphi(U)$ is a homeomorphism.
\item[(iv)] A \textit{locally analytic manifold over K} is a topological space $M$ equipped with a maximal atlas, where the atlas is a set of charts for $M$ any two of which are compatible, i.e. has locally analytic transition maps, and which cover $M$. 
\item[(v)] A \textit{$K$-Lie group} or a \textit{$K$-analytic group} $\Gc$ is a locally analytic manifold over $K$ which also carries the structure of a group such that \begin{itemize}
\item[(1)] the function $(x,y) \mapsto xy$ of $G \times G$ into $G$ is locally analytic and
\item[(2)] the function $x \mapsto x^{-1}$ of $G $ into $G$ is locally analytic. 
\end{itemize} \end{itemize} \end{defn}
We refer to \cite{serrelie}, \cite{bour} or \cite{dix} for the background on $K$-Lie groups.

\begin{defnprop} \cite[Chap. IV.8]{serrelie} \label{g1} Let $G$ be an $m$-dimensional formal group law over the valuation ring $R$ of $K$ as in Definition \ref{law}. For $h \in \mathbb{R}$ we set $$\Gc(h):=\{\underline{z} \in R^m \mid |\underline{z}| < p^{-h} \}.$$ We define a multiplication on $\Gc(h)$ by: \begin{eqnarray} \label{mu}
\underline{z_1} \cdot \underline{z_2}=G(\underline{z_1},\underline{z_2}).
\end{eqnarray} Then $\Gc(h)$ is a $K$-Lie group. \end{defnprop}

\begin{defn} \label{std} A $K$-Lie group constructed as in Proposition \ref{g1} will be called an \textit{$m$-standard group of level h} if $h >0$ and just an \textit{$m$-standard group} if $h=0$. \end{defn}

\begin{defn} Let $\Gc$ be a $K$-Lie group. We denote by  $L(\Gc)=T_e(\Gc)$ the canonical Lie algebra of $\Gc$, see \cite[Chap. 4, §16.3]{tu}. \end{defn}

\begin{prop} Let $G$ be a formal group law over $R$ and let $\Gc(h)$ be the $m$-standard group of level $h$ to $G$. Let $L(\Gc(h))$, respectively $\g$ be the corresponding Lie algebras. Then $$L(\Gc(h)) \cong \g \otimes_R K.$$ \end{prop}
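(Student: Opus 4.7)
The plan is to exhibit both sides as free $K$-modules of rank $m$ on the same basis and then check that the Lie brackets agree; both structures will be read off the formal group law $G$.

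First I would identify the underlying $K$-vector spaces. As a locally analytic manifold $\Gc(h)$ is just the open polydisk $\{\underline z \in R^m \mid |\underline z| < p^{-h}\}$ in $K^m$, and the identity is $e = \underline 0$, since $G(\mathbf X,\mathbf Y) = \mathbf X + \mathbf Y + O(d \geq 2)$ forces $G(0,0)=0$. The single chart $\operatorname{id}\colon \Gc(h) \hookrightarrow K^m$ then yields a canonical identification $L(\Gc(h)) = T_e(\Gc(h)) \cong K^m$ with basis $\tilde e_1,\ldots,\tilde e_m$ given by the coordinate derivations $\partial/\partial z^{(j)}|_{0}$. On the other side, $\g$ is, by Section~\ref{sec1}, the free $R$-module on $e_1,\ldots,e_m$ with $e_j = \partial/\partial t_1^{(j)}|_0$, so $\g \otimes_R K$ is the free $K$-vector space on the same symbols, and the $K$-linear map $e_j \otimes 1 \mapsto \tilde e_j$ is a tautological isomorphism of $K$-vector spaces.

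The remaining content is that this isomorphism transports the Lie brackets. The bracket on $T_e(\Gc(h))$ of a $K$-Lie group is, by the usual recipe (see e.g.\ \cite[Chap.~IV]{serrelie}), the obstruction to commutativity of the group law at second order. In our coordinates the group law is literally the formal group law $G$, and writing
\[
  G^{(j)}(\mathbf X,\mathbf Y) = X_j + Y_j + \sum_{l,k} \gamma_{lk}^j X_l Y_k + O(d \geq 3),
\]
a short direct computation produces
\[
  [\tilde e_l, \tilde e_k] = \sum_{j=1}^m (\gamma_{lk}^j - \gamma_{kl}^j)\, \tilde e_j.
\]
This is precisely formula \eqref{lb} that defines the bracket on $\g$, so the $K$-linear isomorphism above is in fact an isomorphism of Lie algebras.

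I do not expect any real obstacle: the identification of the underlying modules is tautological, and the only step with genuine content is the bracket computation, which is the standard fact relating an $m$-dimensional formal group law to its Lie algebra as recorded, with the same notation and sign conventions, in \cite[Chap.~II, p.~79]{haze}. The paper has already invoked this source in Section~\ref{sec1} for exactly this bracket formula, so the argument essentially reduces to making the identification of bases explicit and quoting \eqref{lb}.
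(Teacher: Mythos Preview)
Your proposal is correct and follows essentially the same approach as the paper: identify $T_e(\Gc(h))$ with $K^m$ via the coordinate derivations $\partial/\partial t^{(j)}|_0$, then read off the structure constants of the bracket from the second-order terms of $G$ and match them with formula~\eqref{lb}. The paper's proof is slightly terser but does exactly this, citing \cite[Prop.~17.3]{schn} for the computation of the structure coefficients.
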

\begin{proof} (Compare \cite[Prop. 17.3]{schn}.) One can choose the local coordinates $(t^{(1)}, \ldots, t^{(m)})$ in a neighborhood of the identity $e$ (with coordinates $(0,\ldots,0)$), such that we get a natural basis $\frac{\partial}{\partial t^{(1)}}\big|_{\mbox{\begin{tiny}$(0)$\end{tiny}}},\ldots,\frac{\partial}{\partial t^{(m)}}\big|_{\mbox{\begin{tiny}$(0)$\end{tiny}}}$ of the tangent space $T_e(\Gc(h)).$ Let the formal group law $G$ be given by $$G^{(j)}(\textbf{X},\textbf{Y}) = X_j+ Y_j+ \sum_{l,k=1}^{m} \gamma_{lk}^j X_lY_k +O(d \geq 3)$$ for all $j \in \{1,\ldots m\}$. Then the structure coefficients, i.e. those elements $c_{ij}^k \in R$ such that $$[\frac{\partial}{\partial t^{(i)}}\big|_{\mbox{\begin{tiny}$(0)$\end{tiny}}}, \frac{\partial}{\partial t^{(j)}}\big|_{\mbox{\begin{tiny}$(0)$\end{tiny}}}]= \sum c_{ij}^k \frac{\partial}{\partial t^{(k)}}\big|_{\mbox{\begin{tiny}$(0)$\end{tiny}}},$$ of $L(\Gc(h))$ are given by $\sum_{j=1}^{m}(\gamma_{lk}^j-\gamma_{kl}^j)$ since multiplication on $\Gc(h)$ is defined by the formal group law, see (\ref{mu}). Hence the definition of the Lie bracket in $L(\Gc(h))$ coincides with the definition of the Lie bracket in $\g$, see (\ref{lb}). \end{proof}

\begin{defn} Let $\Gc$ be a $K$-Lie group. We denote by $\mathcal{O}^{la}(\Gc, K)$ locally analytic functions on $\Gc$, i.e. those that can be locally written as a converging power series with coefficients in $K$, see Definition \ref{locanfct}. The cobar complex $\mathcal{O}^{la}(\Gc^n, K)_{n \geq 0}$, where we identified $\mathcal{O}^{la}(\Gc^n, K)$ with ${\mathcal{O}^{la}(\Gc, K)}^{\hat{\otimes}n}$, with the usual differential as in Definition \ref{gc} leads to \textit{locally analytic group cohomology} whose $n$-th cohomology group is denoted by $H^n_{la}(\Gc, K)$. \end{defn}

Let $G$ be a formal group law over $R$. Then one can consider the same formal group law over $K$, denote by $G_K$, with Lie algebra $\g_K$. Let $\Gc(h)$ be the $m$-standard group of level $h$ associated to $G$. Then we obtain by assigning to each locally analytic function $f \in \mathcal{O}^{la}(\Gc(h), K)$ its local power series representation around $e$ a morphism of complete Hopf algebras $$\mathcal{O}^{la}(\Gc(h), K) \rightarrow \OGK$$ and hence a map $$\Phi_e:H^n_{la}(\Gc(h), K) \rightarrow H^n(G_K, K).$$ 

Recall that in the case of a formal group law over the field $K$ the modified ring of functions $\tOGK$ coincides with the ring of functions $\OGK$. According to Corollary \ref{cor1} and since $$\Hom_{R}({\bigwedge}^n\g, R) \otimes_R K \cong \Hom_{K}({\bigwedge}^n\g_K, K)$$ we get an isomorphism $$\Phi_K: H^n(G_K, K) \rightarrow H^n(\g, K).$$ This isomorphism is, as we have seen in Proposition \ref{phiexpl}, given by the continuous extension of differentiating cochains. 

\begin{thm}[Comparison Theorem for standard groups] \label{thmstd} Let $G$ be a formal group law over $R$ and let $\Gc(h)$ be the $m$-standard group of level $h$ to $G$ with Lie algebra $\g \otimes_R K$. Then the map \begin{align*} \Phi_s: H^n_{la}(\Gc(h), K) \rightarrow H^n(\g, K) \end{align*} given by the continuous extension of \begin{align*} f_1 \otimes \cdots \otimes f_n \mapsto df_1\wedge \cdots \wedge df_n, \end{align*}
for $n\geq 1$ and by the identity for $n=0$ is an isomorphism for all $h~>~h_0~=~\frac{1}{p-1}$. \end{thm}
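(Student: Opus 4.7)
The plan is to reduce the theorem to Corollary \ref{cor1} by passing to germs at the identity, exploiting the crucial fact that elements of the modified ring $\tOG$ converge on exactly the polydisc defining $\Gc(h)$ when $h>h_0$. First I would establish that the limit morphism
\[
\Phi_\infty: H^n(\mathcal{O}^{la}(\Gc(0)^\bullet)_e, K) \longrightarrow H^n(\g, K),
\]
obtained by differentiating germs of cochains at $(e,\ldots,e)$, is an isomorphism. A germ at $e$ is represented by a convergent power series in the local coordinates around $e$; since $K$ is a field the divisibility condition of Definition \ref{mO} is vacuous, and such germs coincide with $\tOGK$. Hence the germ complex is identified with the modified cobar complex $\tK^\bullet(G_K,K)$, and $\Phi_\infty$ is induced by the quasi-isomorphism $\tphi$ of Theorem \ref{phi}, to which Corollary \ref{cor1} applies.

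For injectivity of $\Phi_s$, note that $\Phi_s$ factors through $\Phi_\infty$ via the natural "take germ at $e$" restriction $H^n_{la}(\Gc(h), K) \to H^n(\mathcal{O}^{la}(\Gc(0)^\bullet)_e, K)$. To show this restriction is injective I would set up a Cartan--Leray type spectral sequence analogous to the one used in \cite[Thm.~4.3.1]{HKN}, but now independent of Lazard's saturation: cover $\Gc(h)^n$ by translates of a small neighbourhood of $(e,\ldots,e)$ via a finite quotient $\Gc(h)/\Gc(h')$ with $h'\gg h$. Each stalk is locally-analytically isomorphic to $\mathcal{O}^{la}(\Gc(0))_e^{\hat\otimes n}$ by translation invariance, so Step~1 forces the required vanishing and produces injectivity.

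For surjectivity, given $[\omega]\in H^n(\g,K)$, Corollary \ref{cor1} yields a cocycle $\tilde f\in\tOGK^{\hat\otimes n}$ with $\tphi(\tilde f)=\omega$. The convergence result (Lemma \ref{convlemma}) ensures that every element of $\tOGK$ converges on the polydisc $\{|z|<p^{-h}\}$ for $h>h_0=\tfrac{1}{p-1}$, i.e.\ precisely the domain of $\Gc(h)$. Hence $\tilde f$ is a genuine locally analytic $n$-cochain on $\Gc(h)^n$ and remains a cocycle, because both cobar differentials are induced through comultiplication by the same formal group law $G$. By the explicit description in Corollary \ref{phiexpl} its image under $\Phi_s$ equals $[\omega]$.

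The key difficulty will lie in the injectivity argument: one must verify that the higher $E_2$-columns of the Cartan--Leray spectral sequence vanish, equivalently that locally analytic cohomology of a polydisc neighbourhood with $K$-coefficients is concentrated in degree zero. Establishing this Poincaré-lemma-style acyclicity without invoking Lazard's saturated-algebra machinery is the central new ingredient; the surjectivity step, by contrast, is essentially a routine consequence of the convergence estimate once Lemma \ref{convlemma} is in place.
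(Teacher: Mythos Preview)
Your surjectivity and limit-morphism arguments share a common error: you conflate $\tOGK$ with $\tilde{\mathcal{O}}(G)$. Over the field $K$ the condition $\underline{j}!\,b_{\underline{j}}\in K$ is vacuous, so $\tOGK=K[[\underline{t}]]$, and Lemma~\ref{convlemma} says nothing about this ring---a series like $\sum_n p^{-n^2}t^n$ lies in $K[[t]]$ but converges nowhere. The convergence estimate genuinely needs the integral structure: Lemma~\ref{convlemma} applies to $\tilde{\mathcal{O}}(G_h)$ \emph{over $R$}, where $\underline{j}!\,b_{\underline{j}}\in R$ forces $|b_{\underline{j}}|<p^{|\underline{j}|/(p-1)}$. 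Accordingly, for surjectivity the paper lifts $[\omega]$ via Corollary~\ref{cor1} applied over $R$ to a cocycle in $\tilde{\mathcal{O}}(G)^{\hat\otimes n}\otimes_R K$, not in $K[[\underline{t}]]^{\hat\otimes n}$; and for the limit morphism one identifies $\OGe$ with $\varinjlim_h\tilde{\mathcal{O}}(G_h)\otimes_R K$ (Corollary~\ref{mod2}), not with $K[[\underline{t}]]$. Your claim that ``germs coincide with $\tOGK$'' is false: germs are power series converging on \emph{some} polydisc, a strictly smaller ring.

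Your injectivity argument is also different from the paper's and, as stated, cannot work. The ``Poincar\'e-lemma-style acyclicity'' you ask for---that locally analytic group cohomology of a small polydisc neighbourhood be concentrated in degree~$0$---is exactly what fails: by the very theorem being proved, $H^n_{la}(\Gc(h'),K)\cong H^n(\g,K)$ for $h'$ large, and this is generally nonzero. The paper instead uses a Hochschild--Serre spectral sequence (Theorem~\ref{ss}) for the open normal subgroup $\Gc(h')\lhd\Gc(h)$; since $\Gc(h)/\Gc(h')$ is finite and $\mathrm{char}\,K=0$, the higher cohomology of the quotient vanishes and the sequence degenerates to $H^n_{la}(\Gc(h),K)\cong H^n_{la}(\Gc(h'),K)^{\Gc(h)/\Gc(h')}$. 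This gives injectivity of all restriction maps, hence of the map to germs, and then Lemma~\ref{limit} finishes.
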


Note that since the elements of the form $f_1 \otimes \cdots \otimes f_n$ form a basis of the dense subset $\mathcal{O}(G_K)^{\otimes n} \subset \mathcal{O}(G_K)^{\hat{\otimes}n}$ and since $\Phi_s$ is given by the composition $\Phi_K \circ \Phi_e$, we use the suggestive notation $f_1 \otimes \cdots \otimes f_n$ for an element of $\mathcal{O}^{la}(\Gc(h), K)$. 

\begin{rem} Huber and Kings showed in \cite{HK} that one can directly define a map from locally analytic group cohomology to Lie algebra cohomology by differenting cochains, as in Theorem \ref{thmstd}, and that in the case of smooth algebraic group schemes $H$ over $\Z_p$ with formal group $\mathcal{H} \subset H(\Z_p)$ the resulting map $$ \Phi: H^n_{la}(\mathcal{H}, \Q_p) \rightarrow H^n(\mathfrak{h}, \Q_p)$$ coincides with Lazard's comparison isomorphism (\cite[Theorem 4.7.1]{HK}). In their joint work with N. Naumann in \cite{HKN} they extended the comparison isomorphism for $K$-Lie groups attached to smooth group schemes with connected generic fibre over the integers of $K$ (\cite[Theorem 4.3.1]{HKN}). \end{rem}

\begin{rem} \label{sketch}Let us sketch the argument of the proof. In the first step, we are going to show that if we restrict to $\OGe$, the ring of germs of locally analytic functions on $\Gc(0)$ in $e$, we can show that the limit morphism $$\Phi_\infty: H^n(\mathcal{O}^{la}(\mathcal{G}(0)^\bullet)_e, K) \rightarrow H^n(\mathfrak{g}, K)$$ is an isomorphism, see Lemma \ref{limit}. Then injectivity of $\Phi_s$ follows from a spectral sequence argument, see Corollary \ref{corinj1} and \ref{corinj2}. The proof of this part will be analogous to the proof of Theorem 4.3.1 in \cite{HKN}, however independent of the work of Lazard \cite{lazard}. For surjectivity we will use the statement of Theorem \ref{phi}. \end{rem}

The Main Theorem of the introduction - which stated that if $K$ is a finite extension of $\Q_p$ and if $\Gc$ is a $K$-Lie group, then there exists an open subgroup $\mathcal{U}$ of $\Gc$ such that the Lazard morphism $$\Phi_L: H^n_{la}(\mathcal{U}, K) \rightarrow H^n(\mbox{Lie}(\mathcal{U}), K)$$ induced by differentiating cochains is an isomorphism - can now be deduced from the following Lemma \ref{subgr}. 

\begin{lemma} \cite[Chap. IV.8]{serrelie} \label{subgr} Any $K$-Lie group contains an open subgroup which is an $m$-standard group. \end{lemma}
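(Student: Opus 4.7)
The plan is to produce a formal group law $G$ over the valuation ring $R$ from the local analytic structure of $\mathcal{G}$ near the identity $e$, and then to identify a sufficiently small neighborhood of $e$ with the $m$-standard group attached to $G$ at a high enough level, in the sense of Definition \ref{g1}.

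First I would choose a chart $(U,\varphi,K^m)$ around $e$ with $\varphi(e)=0$ and consider the pull-back of multiplication via $\varphi$. Shrinking $U$ if necessary, local analyticity produces an $m$-tuple of convergent power series $F(\mathbf{X},\mathbf{Y})\in K[[\mathbf{X},\mathbf{Y}]]^m$ describing multiplication near $(0,0)$. The identity axiom forces $F^{(j)}(\mathbf{X},\mathbf{0})=X_j$ and $F^{(j)}(\mathbf{0},\mathbf{Y})=Y_j$, which yields the normal form $F^{(j)}(\mathbf{X},\mathbf{Y})=X_j+Y_j+\sum\gamma_{lk}^j X_lY_k+O(d\ge 3)$ of Definition \ref{law}(1). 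Associativity of group multiplication in $\mathcal{G}$, by uniqueness of Taylor expansions at the origin, translates into the formal identity $F(F(\mathbf{X},\mathbf{Y}),\mathbf{Z})=F(\mathbf{X},F(\mathbf{Y},\mathbf{Z}))$, so $F$ defines an $m$-dimensional formal group law over $K$.

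Second I would rescale coordinates so that the resulting formal group law is defined over $R$. Since $F$ converges on some polydisc $\{|\mathbf{X}|,|\mathbf{Y}|\le p^{-N}\}$, its coefficients $c_{\underline{i},\underline{j}}$ satisfy a bound $v_p(c_{\underline{i},\underline{j}})\ge -N(|\underline{i}|+|\underline{j}|)+C$ for some constant $C$. Replacing $\varphi$ by $a^{-1}\varphi$ for $a\in K^\times$ replaces $F(\mathbf{X},\mathbf{Y})$ by $a^{-1}F(a\mathbf{X},a\mathbf{Y})$, multiplying each coefficient by $a^{|\underline{i}|+|\underline{j}|-1}$. Choosing $v_p(a)$ large enough relative to $N$ and $C$ secures $a^{|\underline{i}|+|\underline{j}|-1}\cdot c_{\underline{i},\underline{j}}\in R$ for all $\underline{i},\underline{j}$; the constant and linear terms are handled separately using the normal form above. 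The rescaled power series $G$ is then a formal group law in the sense of Definition \ref{law} over $R$.

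Finally I would pick $h>0$ large enough that $\mathcal{G}(h)=\{\underline{z}\in R^m:|\underline{z}|<p^{-h}\}$ lies inside the image of the rescaled chart and so that $G$ converges on $\mathcal{G}(h)\times\mathcal{G}(h)$. Because the coefficients of $G$ are in $R$ and $|\underline{z}|<p^{-h}<1$, the ultrametric inequality gives $|G^{(j)}(\underline{z}_1,\underline{z}_2)|<p^{-h}$ for $\underline{z}_1,\underline{z}_2\in\mathcal{G}(h)$, so $\mathcal{G}(h)$ is closed under the multiplication induced by $G$; together with Remark \ref{exs}, which gives $s(\mathbf{X})\equiv -\mathbf{X}$ modulo degree $2$, the same kind of estimate yields closure under inversion. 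Pulling back through $\varphi$ identifies an open subgroup of $\mathcal{G}$ with the $m$-standard group of level $h$ attached to $G$. The main technical obstacle will be the second step: one must exhibit a single scalar $a$ that integralizes \emph{every} coefficient of $F$ at once, and this is precisely why the absolute convergence of $F$ on a polydisc is indispensable, since it furnishes the uniform linear-in-degree lower bound on valuations that lets the geometric factor $a^{|\underline{i}|+|\underline{j}|-1}$ dominate uniformly in the multi-indices.
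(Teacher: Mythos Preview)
The paper does not supply its own proof of this lemma; it merely cites \cite[Chap.~IV.8]{serrelie}. Your outline is precisely the standard argument one finds there: extract a formal group law over $K$ from the Taylor expansion of multiplication in a chart at $e$, rescale coordinates by a scalar of sufficiently large valuation to force all coefficients into $R$, and then observe that any small enough ball is stable under the resulting integral formal group law. The argument is correct, including the key point you flag at the end: convergence of $F$ on a closed polydisc of radius $p^{-N}$ yields a bound $v_p(c_{\underline{i},\underline{j}})\ge -N(|\underline{i}|+|\underline{j}|)+C$ (with $C$ possibly negative), and since the normal form guarantees that the only terms of total degree $\le 1$ already have integral coefficients, the factor $a^{|\underline{i}|+|\underline{j}|-1}$ with $|\underline{i}|+|\underline{j}|\ge 2$ suffices to integralize everything once $v_p(a)$ exceeds roughly $2N-C$.

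One small remark: in your last step you need not separately impose that $G$ converges on $\mathcal{G}(h)\times\mathcal{G}(h)$; once the coefficients lie in $R$, convergence on the whole open unit polydisc is automatic by the ultrametric inequality, and this is exactly what makes Definition~\ref{g1} work for every $h>0$.
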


\subsection{Limit morphism} \label{seclimit}
This section contains the first step in the proof of Theorem \ref{thmstd}. We will prove that the limit morphism $\Phi_\infty~:~H^n(\mathcal{O}^{la}(\mathcal{G}(0)^\bullet)_e,K)~\rightarrow~H^n(\mathfrak{g},K)$, mentioned in Remark \ref{sketch}, is an isomorphism. To do this we need some further definitions and a comparison of germs.

\begin{prop} \label{rel} Let $G$ be a formal group law over $R$ and let $\Gc(h)$ be the $m$-standard group of level $h$ to $G$. Then there exists an isomorphic $m$-dimensional formal group law $G_h$ over $R$ such that the associated $m$-standard group $\Gc_h(0)$ is equal to $\Gc(h)$. \end{prop}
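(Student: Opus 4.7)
The natural idea is to rescale coordinates: if $c$ is a scalar with $|c| = p^{-h}$, then multiplication by $c$ identifies the ball $\{|\underline{z}| < 1\}$ bijectively with $\Gc(h) = \{|\underline{z}| < p^{-h}\}$, and pulling back $G$ along this identification should yield the desired $G_h$. After replacing $h$ by the smallest value in $v_p(K^\times)$ which defines the same strict inequality (this does not change $\Gc(h)$ since $|\cdot|$ takes discrete values on $K$), I may choose $c \in R$ with $|c| = p^{-h}$; since $h > 0$ this forces $c \in \mathfrak{m}$.

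I would then define the $m$-tuple of power series
\[ G_h(\textbf{X}, \textbf{Y}) := c^{-1} G(c\textbf{X}, c\textbf{Y}), \]
so that $G_h^{(j)}(\textbf{X},\textbf{Y}) = c^{-1} G^{(j)}(c\textbf{X},c\textbf{Y})$ for each $j$. To check that $G_h \in R[[\textbf{X},\textbf{Y}]]^m$, write $G^{(j)}(\textbf{X},\textbf{Y}) = X_j + Y_j + \sum_{|\underline{\alpha}|+|\underline{\beta}| \geq 2} a^{(j)}_{\underline{\alpha},\underline{\beta}} \, \textbf{X}^{\underline{\alpha}}\textbf{Y}^{\underline{\beta}}$ with $a^{(j)}_{\underline{\alpha},\underline{\beta}} \in R$; the coefficient of $\textbf{X}^{\underline{\alpha}}\textbf{Y}^{\underline{\beta}}$ in $G_h^{(j)}$ is then $a^{(j)}_{\underline{\alpha},\underline{\beta}} \cdot c^{|\underline{\alpha}|+|\underline{\beta}|-1}$. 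The linear part gives simply $X_j + Y_j$, and for $|\underline{\alpha}|+|\underline{\beta}| \geq 2$ the factor $c^{|\underline{\alpha}|+|\underline{\beta}|-1}$ lies in $\mathfrak{m}$ (since $|c|<1$), so each coefficient is in $R$. Associativity for $G_h$ follows formally by substituting $c\textbf{X}, c\textbf{Y}, c\textbf{Z}$ into the associativity identity for $G$, and the linear-part axiom of Definition \ref{law} is immediate; hence $G_h$ is an $m$-dimensional formal group law over $R$.

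Next I would verify that $G_h$ is isomorphic to $G$ over $K$ and that $\Gc_h(0) = \Gc(h)$. The $m$-tuple $\phi(\textbf{X}) := c\textbf{X}$ is a homomorphism $G_h \to G$ in the sense of Definition \ref{hom} (and over $R$) because
\[ \phi(G_h(\textbf{X},\textbf{Y})) = c \cdot c^{-1} G(c\textbf{X},c\textbf{Y}) = G(\phi(\textbf{X}),\phi(\textbf{Y})); \]
its two-sided inverse $\psi(\textbf{X}) := c^{-1}\textbf{X}$ has coefficients in $K$ (not $R$), so $\phi$ realizes an isomorphism over $K$. Evaluated on points, $\phi$ sends $\Gc_h(0) = \{\underline{z} \in R^m : |\underline{z}| < 1\}$ bijectively onto $\{\underline{y} \in R^m : |\underline{y}| < |c|\} = \Gc(h)$ and intertwines the multiplications given by $G_h$ and $G$, so $\Gc_h(0) = \Gc(h)$ as $K$-Lie groups.

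The only technical subtlety is securing a scalar $c \in R$ with $|c| = p^{-h}$, which is handled by the initial reduction to $h \in v_p(K^\times)$ indicated above. I do not anticipate any serious obstacle: the construction is a standard change-of-level rescaling, and the remaining checks reduce to elementary valuation estimates together with the definitional properties of $G$ from Definition \ref{law}.
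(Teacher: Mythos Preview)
Your approach is essentially the same as the paper's: both define $G_h$ by the rescaling $G_h(\textbf{X},\textbf{Y}) = c^{-1}G(c\textbf{X},c\textbf{Y})$ for a scalar $c$ of absolute value $p^{-h}$, the paper simply writing $c = p^h$. The only cosmetic difference is that the paper tracks coordinates so that $\Gc_h(0)$ is literally the set $\Gc(h)$, whereas you produce the explicit isomorphism $\phi(\textbf{X}) = c\textbf{X}$ identifying them; your reduction ensuring that such a $c \in R$ actually exists is in fact more careful than the paper's.
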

\begin{proof} Let the formal group law $G$ be given by $$G^{(j)}(\textbf{X},\textbf{Y}) = X_j+ Y_j+ \sum_{l,k=1}^{m} \gamma_{lk}^j X_lY_k +O(d \geq 3)$$ for all $j \in \{1,\ldots, m\}$  with $\textbf{X}=(X_1, \ldots, X_m)$, $\textbf{Y}=(Y_1,\ldots, Y_m)$ and with $\OG=R[[\underline{t}]]$. Consider the $m$-dimensional formal group law defined by 
$$G_h^{(j)}(\textbf{X},\textbf{Y}) = p^{-h} G^{(j)}(p^h\textbf{X},p^h\textbf{Y}) \ \mbox{for all} \ j \in \{1,\ldots, m\}$$ with $p^h\textbf{X}=(p^hX_1, \ldots, p^hX_m)$ and $p^h\textbf{Y}=(p^hY_1,\ldots, p^hY_m)$. Then the ring of functions to $G_h$ is given by $R[[p^{-h}\underline{t}]]$ where $p^{-h}\underline{t}$ is the short notation for $p^{-h}t^{(1)}, \ldots,  p^{-h}t^{(m)}$. Now the $m$-standard group $$\Gch(0) = \{\underline{z} \in R^m \mid |p^{-h}\underline{z}| < 1 \}$$ can be rewritten as following 
\begin{align*}
\Gch(0) = \{\underline{z} \in R^m \mid p^{h}|\underline{z}| < 1 \} = \{\underline{z} \in R^m \mid |\underline{z}| < p^{-h} \} = \Gc(h). \end{align*}
The homomorphism from $G$ to $G_h$ is given by the $m$-tuple $$\alpha(\textbf{X})=(\alpha_1(\textbf{X}), \ldots ,\alpha_m(\textbf{X})) \ \mbox{with} \ \alpha_j(\textbf{X})=p^{-h}X_j$$ and the homomorphism from $G_h$ to $G$ is given by the $m$-tuple $$\beta(\textbf{X})=(\beta_1(\textbf{X}), \ldots ,\beta_m(\textbf{X})) \ \mbox{with} \ \beta_j(\textbf{X})=p^hX_j,$$ compare Definition \ref{hom}. Since they satisfy the condition $\alpha(\beta(\textbf{X}))= \beta(\alpha(\textbf{X}))$ the formal group laws $G$ and $G_h$ are isomorphic. 
\end{proof}

\begin{ex} Let $G_m(X,Y)=X+Y+XY$ be the multiplicative formal group law. Then the formal group law $G_h$ such that $\Gch(0)=\Gc_m(h)$ is given by $$G_h(X,Y)=p^{-h}(p^hX+p^hY+p^{2h}XY)= X+Y+p^hXY.$$ \end{ex}

\begin{lemma} \label{open} Let $G$ be an $m$-dimensional formal group law over $R$. Then the associated $m$-standard groups $\Gc(h)$ of level $h$, $h \in \mathbb{N}\setminus\{0\}$, are open and normal subgroups of $\Gc(0)$ of finite index and they form a neighbourhood basis of $e$ in $\Gc(0)$. \end{lemma}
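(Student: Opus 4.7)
The plan is to verify each of the four assertions—openness, the subgroup/normality property, finite index in $\Gc(0)$, and the neighbourhood basis property—directly from the definition $\Gc(h) = \{\underline{z} \in R^m \mid |\underline{z}| < p^{-h}\}$ and the power series expansion of the formal group law from Definition \ref{law}. The single non-formal ingredient used throughout is the ultrametric estimate that, for $h \geq 1$ and $\underline{z}_1, \underline{z}_2 \in \Gc(h)$, any monomial in the entries of $\underline{z}_1$ and $\underline{z}_2$ of total degree at least two and coefficient in $R$ has absolute value at most $p^{-2h} < p^{-h}$.

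Openness and the neighbourhood basis property are immediate: $\Gc(h)$ is cut out from $\Gc(0)$ by a strict inequality in the subspace topology from $R^m$, and the sets $\Gc(h)$ with $h \geq 1$ shrink to $\{0\}$ as $h \to \infty$, so they form a basis of neighbourhoods of $e$. For the subgroup property I would expand $G^{(j)}(\underline{z}_1,\underline{z}_2) = z_1^{(j)} + z_2^{(j)} + O(d\geq 2)$; the strong triangle inequality combined with the ultrametric estimate then gives $|G^{(j)}(\underline{z}_1,\underline{z}_2)| < p^{-h}$, and closure under inversion follows similarly from Remark \ref{exs}, which yields $s(\underline{z}) \equiv -\underline{z}$ modulo degree two.

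For normality I would view conjugation by a fixed $\underline{a} \in \Gc(0)$ as the map $c_{\underline{a}}(\underline{z}) = G(G(\underline{a},\underline{z}), s(\underline{a}))$. Expanded as a power series in $\underline{z}$, its coefficients are convergent power series in $\underline{a}$ with coefficients in $R$, and hence, since $|\underline{a}|<1$, take values in $R$ after evaluation at $\underline{a}$. The constant term vanishes because $c_{\underline{a}}(0) = G(\underline{a},s(\underline{a})) = 0$, so $c_{\underline{a}}(\underline{z}) = L_{\underline{a}}(\underline{z}) + (\text{degree}\geq 2)$ with $L_{\underline{a}}$ an $R$-linear endomorphism of $R^m$, and the ultrametric estimate again yields $c_{\underline{a}}(\Gc(h)) \subseteq \Gc(h)$. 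For finite index I would use the same estimate in the stronger form $G(\underline{z},\underline{w}) \equiv \underline{z}+\underline{w} \pmod{\Gc(h)}$ whenever either $\underline{z}$ or $\underline{w}$ lies in $\Gc(h)$. This shows that the group-theoretic cosets coincide with the additive cosets, so $\Gc(0)/\Gc(h)$ is identified with the additive finite quotient $(\mathfrak{m}/\pi^{eh+1}R)^m$, where $\pi$ is a uniformiser of $R$ and $e$ the ramification index of $K/\Q_p$; finiteness follows from finiteness of the residue field of $R$.

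The only step with any real content is normality, where one must keep track of the fact that the coefficients of $c_{\underline{a}}$, viewed as a power series in $\underline{z}$, do land in $R$ after substitution of $\underline{a}$. This is a direct consequence of $\underline{a}$ lying in the valuation ring together with the fact that the structure constants $\gamma^j_{lk}$ of $G$ lie in $R$, so I do not anticipate a serious obstacle; the rest of the lemma is essentially bookkeeping with the strong triangle inequality.
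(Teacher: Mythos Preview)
Your proof is correct and tracks the paper's argument closely for openness, the neighbourhood basis property, and normality: the paper also reduces the conjugation computation to the observation that every term in $G(\textbf{x},G(\textbf{y},s(\textbf{x})))$ containing a factor of $\textbf{y}$ vanishes modulo $p^h$, so the whole expression is congruent to $G(\textbf{x},s(\textbf{x}))=0$. Your version is a slightly more careful unpacking of the same idea.

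The one genuine difference is the finite-index step. The paper invokes compactness: $R^m$ is compact, hence so is $\Gc(0)$, and an open (and automatically closed) subgroup of a compact group has finite index. Your argument instead identifies the group-theoretic cosets of $\Gc(h)$ with the additive cosets via the congruence $G(\underline{z},\underline{w})\equiv\underline{z}+\underline{w}\pmod{\Gc(h)}$, and then reads off the finite quotient $(\mathfrak{m}/\pi^{eh+1}R)^m$. Both are valid; the compactness argument is shorter and conceptually cleaner, while yours gives the index explicitly and avoids appealing to topology beyond the ultrametric estimate you are already using elsewhere.
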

\begin{proof} The $m$-standard groups $\Gc(h)$ of level $h$ are, by their definition in \ref{g1}, obviously open and closed subgroups of $\Gc(0)$ and form a neighbourhood basis of $e$ in $\Gc(0)$. Since $R^m$ is compact, see \cite[Prop. 5.4.5vi]{gou}, $\Gc(0)$ is compact and the open and closed subgroups $\Gc(h)$ of $\Gc(0)$ are of finite index. For the property that these subgroups are normal we have to show that  $$G(\textbf{x},G(\textbf{y},s(\textbf{x}))) \equiv 0 \ (\mbox{mod} \ p^h),$$ for $\textbf{x} \in \Gc(0)$ and $\textbf{y} \in \Gc(h)$, i.e. $ \textbf{y} \equiv 0 \ (\mbox{mod} \ p^h)$, where $s(\textbf{x})$ was defined by the condition that $G(\textbf{x},s(\textbf{x}))=0$, see Proposition \ref{exs}. However since all terms containing $\textbf{y}$ are reduced to $0$ mod $p^h$ we have that $$G(\textbf{x},G(\textbf{y},s(\textbf{x}))) \equiv G(\textbf{x},s(\textbf{x})) \ (\mbox{mod} \ p^h) \equiv 0 \ (\mbox{mod} \ p^h).$$ \end{proof}

\begin{defn} \label{germ} Let $\Gc$ be a $K$-Lie group. We denote by $\mathcal{O}^{la}(\mathcal{G})_e$, the \textit{ring of germs of locally analytic functions on $\Gc$ in $e$}. \end{defn} 

By Lemma \ref{open}, the ring of germs of locally analytic functions on $\Gc(0)$ in $e$ is given by $$\OGe= \varinjlim_h \mathcal{O}^{la}(\Gc(h), K).$$

\begin{defn}
The noetherian $R$-algebra $$R\{\underline{t}\}:=\{ f(\underline{t})=\sum_{\underline{j}} b_{\underline{j}} \underline{t}^{\underline{j}} \ | \ b_{\underline{j}} \in R, |b_{\underline{j}}| \rightarrow 0 \ \mbox{as} \ |\underline{j}| \rightarrow \infty\}$$ is called the \textit{algebra of strictly convergent power series over $R$}. 
\end{defn} 

\begin{rem} \label{rem1} Recall from non-archimedean analysis that every $f$ in $R[[\underline{t}]]$ converges on the open polydisc $\{z \in K^m \mid |\underline{z}| < 1 \}$ and every $f$ in $\mathfrak{m}[[\underline{t}]]$ converges on the closed polydisc $\{z \in K^m \mid |\underline{z}| \leq 1 \}$. The algebra $R\{\underline{t}\}$ is the sub-algebra of $R[[\underline{t}]]$ consisting of those power series which converge on $R^m$, since an infinite sum converges in a non-archimedean field if and only if its terms tend to zero. \end{rem}

\begin{defn} The $K$-algebra $$K\langle \underline{t}\rangle:=\{ f(\underline{t})=\sum_{\underline{j}} b_{\underline{j}} \underline{t}^{\underline{j}} \ | \ b_{\underline{j}} \in K, |b_{\underline{j}}| \rightarrow 0 \ \mbox{as} \ |\underline{j}| \rightarrow \infty\} $$ is the algebra of power series over $K$ which converge on $R^m$ in $K^m$ and is called \textit{Tate algebra}. The elements of $K\langle \underline{t}\rangle$ are called \textit{rigid analytic functions}. \end{defn}

Note that the convergence condition means that, for any $n \in \mathbb{N}$, there exists $j_0 \in \mathbb{N}$ such that for $|\underline{j}| > j_0$, the coefficient $b_{\underline{j}}$ belongs to $\pi^nR$, where $\pi$ is the uniformizing parameter, i.e. $(\pi)=\mathfrak{m}$. We have that  $K\langle \underline{t}\rangle = R\{\underline{t}\} \otimes_{R} K$, see \cite{nic}.

Since germs of locally analytic functions are none other than germs of rigid analytic functions, we can identify $\OGe$ with the limit of Tate algebras $$\OGe \cong \varinjlim_h K\langle p^{-h}\underline{t}\rangle.$$

\begin{defn} \label{defOe} The cobar complex $(\mathcal{O}^{la}(\mathcal{G}(0)^\bullet)_{e}, \partial)$ with $$\mathcal{O}^{la}(\mathcal{G}(0)^n)_e:= \varinjlim_h K\langle p^{-h}\underline{t}_{1 \ldots ,n}\rangle$$ and with the usual differential as in Definition \ref{gc} leads to cohomology groups $H^n(\OGe,K)$. 
\end{defn}

\begin{lemma} \label{limit}
The limit morphism $$\Phi_\infty: H^n(\mathcal{O}^{la}(\Gc(0)^\bullet)_e, K) \rightarrow H^n(\mathfrak{g}, K)$$ is an isomorphism. 
\end{lemma}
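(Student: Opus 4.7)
The plan is to reduce to Corollary \ref{cor1} via the $p$-adic logarithm. Every germ at $e$ is a convergent power series, so Taylor expansion at $e$ gives a morphism $T\colon \mathcal{O}^{la}(\Gc(0))_e \hookrightarrow \OGK$ of complete Hopf algebras, inducing a morphism of cobar complexes. Since the map $f_1 \otimes \cdots \otimes f_n \mapsto df_1 \wedge \cdots \wedge df_n$ of Corollary \ref{phiexpl} depends only on the linear parts at $e$, we obtain the factorization $\Phi_\infty = \Phi_K \circ T_*$. Over the field $K$ one has $\tOGK = \OGK$, so Corollary \ref{cor1} yields that $\Phi_K$ is an isomorphism; it therefore remains to show $T_*$ is an isomorphism.

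For surjectivity, let $\log\colon G \to G^{\mathrm{add}}$ be the formal-group logarithm of $G$, which exists over $K$ since $\mathrm{char}(K)=0$. The standard $p$-adic estimate shows each $\log^{(i)}$ converges on $\Gc(h)$ for $h > h_0 = 1/(p-1)$, and the inverse $\exp$ converges on a neighborhood as well, so $\log^{(i)}, \exp^{(i)} \in \mathcal{O}^{la}(\Gc(0))_e$. Composition gives mutually inverse Hopf-algebra isomorphisms $\log^*\colon \mathcal{O}^{la}(\Gc^{\mathrm{add}}(0))_e \xrightarrow{\sim} \mathcal{O}^{la}(\Gc(0))_e$, and correspondingly of cobar complexes. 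For the additive formal group law the anti-symmetrized polynomials
\[
c_{i_1,\ldots,i_n} := \sum_{\sigma \in S_n} \mathrm{sgn}(\sigma)\, s^{(i_{\sigma(1)})} \otimes \cdots \otimes s^{(i_{\sigma(n)})}
\]
are polynomial cocycles (verified directly from $\mu(s^{(i)}) = s_1^{(i)} + s_2^{(i)}$) with image $n!\, dt^{(i_1)} \wedge \cdots \wedge dt^{(i_n)}$ under differentiation at $e$. Their pullbacks $\log^*(c_{i_1,\ldots,i_n})$ are germ cocycles on $G$ with the same image under $\Phi_\infty$, since $d\log^{(i)}|_e = dt^{(i)}$. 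The basis of $H^n(\g,K)$ is thus hit, so $\Phi_\infty$, and hence $T_*$, is surjective.

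For injectivity, combined with surjectivity it suffices to show that $\dim_K H^n(\mathcal{O}^{la}(\Gc(0)^\bullet)_e, K) \leq \binom{m}{n}$. Via the cobar-complex isomorphism induced by $\log^*$ this reduces to the additive case, and a Künneth decomposition further reduces to the one-dimensional additive formal group law. There the cobar differential $\partial f(t_1, t_2) = f(t_2) - f(t_1 + t_2) + f(t_1)$ forces any $1$-cocycle to be $f(t) = at$, giving $H^1 = K$, while $H^n = 0$ for $n \geq 2$ follows from an explicit contracting homotopy built from $p$-adic antiderivation; the key point is that for a germ $f = \sum a_n t^n$ convergent on $|t| < r$, the series $\sum a_n t^{n+1}/(n+1)$ still converges on $|t| < r$, because $|n+1|^{-1}$ grows only polynomially in $n$ and therefore does not shrink the open disc of convergence. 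One concludes $H^n(\mathcal{O}^{la}(\Gc^{\mathrm{add}}(0)^\bullet)_e, K) \cong \bigwedge^n \g_K^*$, whose dimension is $\binom{m}{n}$, yielding the required bound.

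The main obstacle is the direct computation of the one-dimensional germ cobar cohomology of the additive formal group, specifically the construction of a contracting homotopy that preserves germs; this relies on the fact that $p$-adic antiderivation does not shrink the open polydiscs of convergence.
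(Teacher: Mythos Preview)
Your overall strategy is quite different from the paper's and the surjectivity half is fine, but the injectivity half contains a real gap.

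The paper never passes through the additive formal group or computes any cobar cohomology by hand. Instead it identifies the germ ring itself with $\U*$: using the Lemma of Convergence (Lemma~\ref{convlemma}) one has $\varinjlim_h \mathcal{O}_c(\Gc(h)) \cong \varinjlim_h \tilde{\mathcal{O}}(G_h)$ (Corollary~\ref{mod2}), and then Proposition~\ref{gen} gives $\tilde{\mathcal{O}}(G_h)\cong \U*(\g_h,R)$ for every $h$. Hence
\[
\OGe \;=\; \varinjlim_h K\langle p^{-h}\underline{t}\rangle \;\cong\; \varinjlim_h \tilde{\mathcal{O}}(G_h)\otimes_R K \;\cong\; \U*(\g,K),
\]
as complete Hopf algebras, and the quasi-isomorphism of Theorem~\ref{phi} finishes the proof at the level of complexes. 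No explicit cocycles, no logarithm, no homotopy.

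Your surjectivity argument via the formal logarithm and the antisymmetrized polynomial cocycles is correct and is a pleasant alternative: it produces explicit convergent representatives for the generators of $H^n(\g,K)$. What does not go through is the injectivity step. You assert an ``explicit contracting homotopy built from $p$-adic antiderivation'' for the one-dimensional additive germ cobar complex, but you do not write one down, and antiderivation of a single power series is not by itself a homotopy on $A^{\hat\otimes n}\to A^{\hat\otimes(n-1)}$. In degree~$2$ one can indeed make this work: a normalized additive $2$-cocycle $f(t_1,t_2)=\sum a_{ij}t_1^i t_2^j$ is forced to be symmetric, the cocycle identity yields $a_{ij}=a_{1,i+j-1}\binom{i+j-1}{i-1}/i$, and then $g(t)=-\sum_{n\ge 2} a_{1,n-1}\,t^{n}/n$ satisfies $\partial g=f$; your growth estimate on $|n|^{-1}$ shows $g$ is again a germ. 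But the analogous construction for $n\ge 3$ is neither stated nor obvious, and there is no standard ``integrate in the last variable'' operator that contracts the cobar complex. The K\"unneth step for germ cobar complexes with completed tensor products is likewise asserted without justification. Together these are the missing pieces; the paper's route via $\tals$ avoids both by working with $\U*$, where the comparison with the Koszul complex (Propositions~\ref{k1prop}--\ref{k2prop}) is already available.
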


\begin{rem}
A proof of this lemma can also be found in \cite[Lemma 4.3.3]{HKN}, however our proof will be independent of the work of Lazard \cite{lazard} which is the utmost concern of this work. 
\end{rem}

\begin{defn} \label{Oh} Let $\Gc(h)$ be an $m$-standard group of level $h$. We denote by $\mathcal{O}_c(\Gc(h))$ the \textit{ring of convergent functions on $\Gc(h)$}, i.e. those formal power series in $R[[p^{-h}\underline{t}]]$ which are convergent on the closed polydisc $\{\underline{z} \in K^m \mid |\underline{z}| \leq p^{-h} \}$. \end{defn}

\begin{rem} \label{OZp}
Since $R\{\underline{t}\}= \{ f \in R[[\underline{t}]] \mid f \ \mbox{converges on} \ R^m\}$, due to Remark \ref{rem1}, we get for the ring of convergent functions on the $m$-standard group $\Gc(h)$ of level $h$ that $\mathcal{O}_c(\Gc(h))= R\{p^{-h}\underline{t}\}$.
\end{rem}

The following Lemma \ref{convlemma} will not only be of interest for the proof of Lemma \ref{limit} but also for the proof of the Main Theorem. We will see that functions of this modified ring of functions $\tOG$ still converge. 

\begin{lemma}[Lemma of Convergence] \label{convlemma} 
Let $G$ be a formal group law over $R$, let $\Gc(h)$ be the $m$-standard group of level $h$ to $G$ and let $\mathcal{O}_c(\Gc(h))$ be the ring of convergent functions on $\Gc(h)$. Let $G_h$ be the associated $m$-dimensional formal group law (see Proposition \ref{rel}) and let $$\tilde{\mathcal{O}}(G_h) = \left\lbrace  \sum_{\underline{j}} b_{\underline{j}} (p^{-h}\underline{t})^{\underline{j}}  \in K[[p^{-h}\underline{t}]] \ | \ \underline{j}! b_{\underline{j}} \in R \right\rbrace,$$ see Definition \ref{mO}, be its modified ring of functions. Then $$\tilde{\mathcal{O}}(G_h) \subset \mathcal{O}_c(\Gc(k))$$ for $k>k_0=h+\frac{1}{p-1}$. \end{lemma}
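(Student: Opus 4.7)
The plan is to reduce the statement to an elementary valuation estimate using Legendre's formula. Fix $f = \sum_{\underline{j}} b_{\underline{j}} (p^{-h}\underline{t})^{\underline{j}} \in \tilde{\mathcal{O}}(G_h)$ with $\underline{j}! b_{\underline{j}} \in R$. By Remark \ref{OZp} we have $\mathcal{O}_c(\Gc(k)) = R\{p^{-k}\underline{t}\}$, so rewriting $f$ in terms of $p^{-k}\underline{t}$ gives
\[
f = \sum_{\underline{j}} a_{\underline{j}}\, (p^{-k}\underline{t})^{\underline{j}}, \qquad a_{\underline{j}} := b_{\underline{j}}\, p^{(k-h)|\underline{j}|}.
\]
Hence it suffices to show $a_{\underline{j}} \in R$ and $|a_{\underline{j}}| \to 0$ as $|\underline{j}| \to \infty$.

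The key input is the standard bound $v_p(n!) = \frac{n - s_p(n)}{p-1} \leq \frac{n}{p-1}$ (Legendre's formula, with $s_p(n)$ the sum of $p$-adic digits). Applied multiplicatively, this yields $v_p(\underline{j}!) = \sum_i v_p(j_i!) \leq \frac{|\underline{j}|}{p-1}$. From the defining condition $\underline{j}! b_{\underline{j}} \in R$ one concludes
\[
v_p(b_{\underline{j}}) \geq -v_p(\underline{j}!) \geq -\frac{|\underline{j}|}{p-1}.
\]
Combining with the definition of $a_{\underline{j}}$ gives
\[
v_p(a_{\underline{j}}) \geq (k-h)|\underline{j}| - \frac{|\underline{j}|}{p-1} = \left(k - h - \frac{1}{p-1}\right)|\underline{j}| = (k-k_0)\,|\underline{j}|.
\]

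Since we assumed $k > k_0 = h + \frac{1}{p-1}$, the factor $k-k_0$ is strictly positive. In particular $v_p(a_{\underline{j}}) \geq 0$ for all $\underline{j}$, so $a_{\underline{j}} \in R$; and $v_p(a_{\underline{j}}) \to \infty$ as $|\underline{j}| \to \infty$, i.e.\ $|a_{\underline{j}}| \to 0$. By the characterisation of strictly convergent power series this shows $f \in R\{p^{-k}\underline{t}\} = \mathcal{O}_c(\Gc(k))$.

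There is no real obstacle here: the proof is a direct computation, and the choice of $k_0 = \frac{1}{p-1}$ above $h$ is explained precisely by the Legendre estimate on $v_p(\underline{j}!)$ (the same constant $\frac{1}{p-1}$ that governs the radius of convergence of $\exp$ over $K$, as indicated in the introduction). The only point requiring a bit of care is to ensure that the estimate is uniform in the multi-index, but this follows at once from $v_p(\underline{j}!) \leq |\underline{j}|/(p-1)$ being summed component-wise.
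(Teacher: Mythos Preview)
Your proof is correct and follows essentially the same route as the paper's: rewrite $f$ in the variable $p^{-k}\underline{t}$, bound $v_p(b_{\underline{j}})$ via the Legendre estimate $v_p(\underline{j}!)\le |\underline{j}|/(p-1)$, and conclude that the new coefficients lie in $R$ with absolute value tending to zero. The only cosmetic difference is that the paper phrases the estimate in terms of absolute values and invokes Remark~\ref{rem1} (coefficients in $\mathfrak{m}$) rather than checking the two defining conditions of $R\{p^{-k}\underline{t}\}$ directly as you do.
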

\begin{proof}
Let $f \in \tilde{\mathcal{O}}(G_h)$. Then $f$ can be written as $$f= \sum_{\underline{j}} b_{\underline{j}} (p^{-h}\underline{t})^{\underline{j}},$$ with $\underline{j}! b_{\underline{j}} \in R$. We claim that $f \in \mathcal{O}_c(\Gc(k))$ for $k>k_0=h+\frac{1}{p-1}$. To see this, let us rewrite $f$ in the following way:
\begin{align*} f = \sum_{\underline{j}} b_{\underline{j}} p^{-h|\underline{j}|} \underline{t}^{\underline{j}} =\sum_{\underline{j}} b_{\underline{j}} p^{(k-h)|\underline{j}|} (p^{-k}\underline{t})^{\underline{j}}.\end{align*}
We prove now that $ b_{\underline{j}} p^{(k-h)|\underline{j}|} \in \mathfrak{m}$. Since $\underline{j}! b_{\underline{j}} \in R$ we know that $$|b_{\underline{j}}| < p^{\frac{|\underline{j}|}{p-1}},$$ where we use that for a prime $p$ we have $v_p(n!) < \frac{n}{p-1}$, see \cite[Lemma 4.3.3]{gou}. Thus we can conclude that
\begin{align*} |b_{\underline{j}} p^{(k-h)|\underline{j}|}| &< p^{\frac{|\underline{j}|}{p-1}} p^{-(k-h)|\underline{j}|} \\ 
&= p^{\frac{|\underline{j}|}{p-1}} p^{-k|\underline{j}|} p^{h|\underline{j}|} \\ &< p^{\frac{|\underline{j}|}{p-1}} p^{-(h+\frac{1}{p-1})|\underline{j}|} p^{h|\underline{j}|} =1. \end{align*}
Using the observation of Remark \ref{rem1} we know that $f$ converges on the closed polydisc $\{z \in K^m \mid |\underline{z}| \leq p^{-k} \},$ i.e. $f \in \mathcal{O}_c(\mathcal{G}(k))$. \end{proof}

\begin{cor}\label{mod2}
The map $$\mathfrak{o}: \varinjlim_h \mathcal{O}_c(\Gc(h)) \rightarrow \varinjlim_h \tilde{\mathcal{O}}(G_h)$$ is an isomorphism.
\end{cor}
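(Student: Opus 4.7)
The plan is to exhibit the two directed systems as mutually cofinal subsystems of a single ascending chain of subsets of $K[[\underline{t}]]$; once that is arranged the statement will follow by a purely formal colimit argument. The only non-trivial ingredient will be the Lemma of Convergence \ref{convlemma}, which has just been proved.

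First I would observe that the transition maps in both directed systems are inclusions inside $K[[\underline{t}]]$: for $h'>h$, rewriting $\sum b_{\underline{j}}(p^{-h}\underline{t})^{\underline{j}}$ as $\sum (b_{\underline{j}}p^{(h'-h)|\underline{j}|})(p^{-h'}\underline{t})^{\underline{j}}$ identifies both $\mathcal{O}_c(\Gc(h))\hookrightarrow\mathcal{O}_c(\Gc(h'))$ and $\tilde{\mathcal{O}}(G_h)\hookrightarrow\tilde{\mathcal{O}}(G_{h'})$ with literal inclusions of subspaces of $K[[\underline{t}]]$. Next I would record the tautological inclusion $\mathcal{O}_c(\Gc(h))=R\{p^{-h}\underline{t}\}\hookrightarrow\tilde{\mathcal{O}}(G_h)$ for every $h$, since coefficients $b_{\underline{j}}\in R$ trivially satisfy $\underline{j}!\,b_{\underline{j}}\in R$; this family of inclusions is functorial in $h$ and is precisely what assembles into the colimit morphism $\mathfrak{o}$. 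Conversely Lemma \ref{convlemma} furnishes the opposite inclusion $\tilde{\mathcal{O}}(G_h)\hookrightarrow\mathcal{O}_c(\Gc(k))$ whenever $k>h+\tfrac{1}{p-1}$, so in particular $k=h+1$ always works. Splicing these inclusions together produces a zigzag
$$\mathcal{O}_c(\Gc(h))\hookrightarrow\tilde{\mathcal{O}}(G_h)\hookrightarrow\mathcal{O}_c(\Gc(h+1))\hookrightarrow\tilde{\mathcal{O}}(G_{h+1})\hookrightarrow\cdots$$
of honest inclusions inside $K[[\underline{t}]]$.

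With this setup in place, both surjectivity and injectivity of $\mathfrak{o}$ reduce to pure formalism about directed colimits. For surjectivity, any class $[g]\in\varinjlim_h\tilde{\mathcal{O}}(G_h)$ is represented by some $g\in\tilde{\mathcal{O}}(G_h)$; by Lemma \ref{convlemma}, this $g$ already belongs to $\mathcal{O}_c(\Gc(k))$ for any $k>h+\tfrac{1}{p-1}$, producing the required preimage in $\varinjlim_h\mathcal{O}_c(\Gc(h))$. For injectivity, every map in the zigzag is a set-theoretic inclusion, so each structural map into both colimits is injective, whence $\ker\mathfrak{o}=0$. There is no genuine obstacle here: all the real substance sits in Lemma \ref{convlemma}, and this corollary merely repackages it in the language of germs at $e$.
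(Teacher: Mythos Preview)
Your proof is correct and follows essentially the same approach as the paper: injectivity from the levelwise inclusion $\mathcal{O}_c(\Gc(h))\subset\tilde{\mathcal{O}}(G_h)$ together with exactness of the direct limit, and surjectivity from Lemma~\ref{convlemma}. One small slip: the choice $k=h+1$ does not quite work when $p=2$ (since then $\tfrac{1}{p-1}=1$ and the inequality in Lemma~\ref{convlemma} is strict), but any $k>h+1$ does, and the argument is unaffected.
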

\begin{proof} The map $\mathfrak{o}$ is injective since $\mathcal{O}_c(\Gc(h)) \subset \tilde{\mathcal{O}}(G_h)$ and since the direct limit is exact. For surjectivity let $f$ be an element of $\varinjlim_h \tilde{\mathcal{O}}(G_h)$. Then there exists $h$ such that $f \in \tilde{\mathcal{O}}(G_h)$. However, after Lemma \ref{convlemma} $f \in \mathcal{O}_c(\mathcal{G}(k))$ for $k>k_0=h+\frac{1}{p-1}$.
\end{proof}

\begin{proof}[Proof of Lemma \ref{limit}] 
The proof is essentially the conjunction of Proposition \ref{gen} with all preceding results in this section. We already know that
\begin{align*} \OGe &= \varinjlim_h \mathcal{O}^{la}(\Gc(h), K) \\ &= \varinjlim_h K\langle p^{-h}\underline{t}\rangle \\&\stackrel{\ref{OZp}}{=} \varinjlim_h \mathcal{O}_c(\Gc(h)) \otimes_{R} K \\
&\stackrel{\ref{mod2}}{\cong} \varinjlim_h \tilde{\mathcal{O}}(G_h) \otimes_{R} K.\end{align*}
Let $\mathfrak{g}_h$ be the associated Lie algebra to $G_h$. Then we know from Proposition \ref{gen} that 
\begin{align*}\varinjlim_h \tilde{\mathcal{O}}(G_h) \otimes_{R} K \cong \varinjlim_h \U*(\mathfrak{g}_h, R) \otimes_{R} K.\end{align*} 
However, $\varinjlim_h \U*(\mathfrak{g}_h, R) \cong \U*(\mathfrak{g}, R) \otimes K$ and by Propositions \ref{propu}, \ref{k1prop} and \ref{k2prop} of Subsection \ref{cc} we obtain $$ H^n(\mathcal{O}^{la}(\Gc(0)^\bullet)_e, K) \cong H^n(\g, K).$$
\end{proof}

\subsection{Proof of the Comparison Theorem for standard groups} \label{secmthm}
We mentioned in Remark \ref{sketch} that injectivity of the map $$\Phi_s: H^n_{la}(\Gc(h), K) \rightarrow H^n(\g, K)$$ follows from a spectral sequence argument as in the proof of Theorem 4.3.1 in \cite{HKN}. Hence we will first prove the extistence of the required spectral sequence.

\begin{defn}  Let $\Gc$ be a $K$-Lie group and $\Hc$ a closed subgroup of $\Gc$. We define $I_K:=\Ind_{\Hc \rightarrow \Gc}^{la}(K)$ to be the space of locally analytic maps $f: \Gc \rightarrow K$ such that $f$ is $\Hc$-equivariant. \end{defn}

\begin{lemma}[Shapiro's Lemma] \label{shap} Let $\Gc$ be a $K$-Lie group and $\Hc$ a closed subgroup of $\Gc$. Then $$H^\bullet_{la}(\Gc, I_K) = H^\bullet_{la}(\Hc,K).$$
\end{lemma}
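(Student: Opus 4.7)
The plan is to follow the classical template for Shapiro's Lemma, adapted to the inhomogeneous cobar complex for locally analytic cohomology with non-trivial coefficients in the induced module $I_K = \Ind^{la}_{\Hc \to \Gc}(K)$.

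First I would fix the description of $I_K$-valued cochains. Since $K$ carries the trivial $\Hc$-action, an element of $I_K$ is a locally analytic function $\Gc \to K$ that is left-$\Hc$-invariant. Thus an inhomogeneous $n$-cochain $f \in \mathcal{O}^{la}(\Gc^n, I_K)$ can be identified with a locally analytic function $F: \Gc^{n+1} \to K$ satisfying $F(g_1,\ldots,g_n,hg) = F(g_1,\ldots,g_n,g)$ for all $h \in \Hc$. Accounting for the right-translation action of $\Gc$ on $I_K$, the cobar differential then takes the form
\begin{align*}
(\partial F)(g_1,\ldots,g_{n+1},g) &= F(g_2,\ldots,g_{n+1},gg_1) + \sum_{i=1}^n (-1)^i F(g_1,\ldots,g_ig_{i+1},\ldots,g_{n+1},g) \\
&\quad + (-1)^{n+1} F(g_1,\ldots,g_n,g).
\end{align*}

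Next, I would define the Shapiro map
$$\phi^n\colon \mathcal{O}^{la}(\Gc^n, I_K) \longrightarrow \mathcal{O}^{la}(\Hc^n, K), \qquad \phi^n(F)(h_1,\ldots,h_n) = F(h_1,\ldots,h_n,e),$$
and verify that $\phi^\bullet$ is a cochain map. The only non-formal check is that the first summand of $\partial F$ evaluated at $(h_1,\ldots,h_{n+1},e)$ is $F(h_2,\ldots,h_{n+1},h_1) = F(h_2,\ldots,h_{n+1},h_1 \cdot e)$, which by the left $\Hc$-invariance of $F$ in its last argument equals $F(h_2,\ldots,h_{n+1},e) = \phi^n(F)(h_2,\ldots,h_{n+1})$; this matches the leading term of the standard cobar differential on $C^\bullet_{la}(\Hc,K)$ with trivial coefficients.

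Finally, to show $\phi^\bullet$ is a quasi-isomorphism, I would invoke Frobenius reciprocity in the locally analytic category: for any locally analytic $\Gc$-module $N$ one has a natural isomorphism
$$\Hom^{la}_\Gc(N, I_K) \;\cong\; \Hom^{la}_\Hc(N\vert_\Hc, K).$$
Applying this to the homogeneous bar resolution $(\mathcal{O}^{la}(\Gc^{\bullet+1},K),d)$ of the trivial $\Gc$-module $K$ identifies the two complexes computing $H^\bullet_{la}(\Gc, I_K)$ and $H^\bullet_{la}(\Hc, K)$, and a diagram chase shows that this identification agrees at the level of inhomogeneous cochains with the explicit map $\phi^\bullet$ defined above.

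The principal obstacle I expect is homological rather than algebraic: one must verify that the bar resolution of $K$ by $\Gc$-modules remains an acyclic resolution computing locally analytic $\Hc$-cohomology after restriction, and that the required Frobenius reciprocity holds in the chosen category of locally analytic representations (for instance in the sense of Feaux de Lacroix, or of Schneider--Teitelbaum). Once that categorical framework is in place, the rest of the argument is essentially formal.
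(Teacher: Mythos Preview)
The paper does not actually prove this lemma: its entire proof is a citation to Casselman--Wigner \cite[Prop.~3, Remarks (2) and (3)]{casselm}. Your proposal, by contrast, sketches a direct argument along the classical lines (explicit Shapiro map on inhomogeneous cochains plus Frobenius reciprocity applied to the bar resolution). The formulas you write down are correct, and your identification of the ``principal obstacle'' --- that one must know Frobenius reciprocity and acyclicity of the restricted bar resolution hold in the locally analytic category --- is exactly the substantive point; this is precisely what the cited passage in Casselman--Wigner supplies. So your approach is not wrong, but it ultimately rests on the same analytic input the paper invokes by reference; the advantage of your write-up is that it makes explicit what that input is and how it is used, whereas the paper simply defers the whole statement to the literature.
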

\begin{proof} See \cite[Prop. 3 (Shapiro's Lemma), Remark (2) and (3)]{casselm}. \end{proof}

The proof of the following theorem about the spectral sequence for locally analytic group cohomology can now, after we have seen that Shapiro's Lemma holds in the case of locally analytic group cohomology, be adopted from the proof for discrete groups, see e.g. \cite[Chap. II.1]{neukirch}.

\begin{thm} \label{ss} Let $\Gc$ be a $K$-Lie group and $\Hc$ be a closed normal subgroup of $\Gc$. Then there is a cohomological spectral sequence $$E_2^{pq}=H^p_{la}(\Gc/\Hc, H^q_{la}(\Hc, K)) \Rightarrow H^{p+q}_{la}(\Gc, K).$$ \end{thm}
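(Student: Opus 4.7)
The plan is to mimic the standard proof of the Lyndon-Hochschild-Serre spectral sequence for abstract groups (cf. \cite[Chap. II.1]{neukirch}), with Shapiro's Lemma (Lemma \ref{shap}) as the key replacement for the usual acyclicity of induced modules. The goal is to realise $H^\bullet_{la}(\Gc, K)$ as the total cohomology of a bicomplex whose two associated spectral sequences collapse in complementary ways.

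First, I would consider the complex of locally analytic $\Gc$-modules $I^\bullet$ with $I^n := \mathcal{O}^{la}(\Gc^{n+1}, K)$, where $\Gc$ acts by left translation on the first factor. A standard contracting homotopy shows this resolves the trivial $\Gc$-module $K$, and its $\Gc$-invariants reproduce the cobar complex computing $H^\bullet_{la}(\Gc, K)$. Since $I^n \cong \Ind_{\{1\} \rightarrow \Gc}^{la}(\mathcal{O}^{la}(\Gc^n, K))$, Shapiro's Lemma applied to the inclusion of the trivial subgroup into $\Hc$ gives $H^q_{la}(\Hc, I^n) = 0$ for $q > 0$; that is, each $I^n$ is $\Hc$-acyclic, and the complex $(I^\bullet)^\Hc$ of locally analytic $\Gc/\Hc$-modules has cohomology $H^\bullet_{la}(\Hc, K)$.

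Next I would form the bicomplex $D^{p,q} := \mathcal{O}^{la}((\Gc/\Hc)^p, (I^q)^{\Hc})$ with the two cobar differentials; these commute because $\Hc$ is normal, so $\Hc$-invariants are automatically $\Gc/\Hc$-equivariant. Taking vertical cohomology first, the $\Hc$-acyclicity of $I^\bullet$ identifies the cohomology of the $q$-th column with $H^q_{la}(\Hc, K)$ as a $\Gc/\Hc$-module, and then horizontal cohomology produces
$$E_2^{p,q} = H^p_{la}(\Gc/\Hc, H^q_{la}(\Hc, K)).$$
Taking horizontal cohomology first, the cobar complex of $\Gc/\Hc$ with values in the coinduced $\Gc/\Hc$-module $(I^q)^\Hc$ collapses to its $\Gc/\Hc$-invariants $(I^q)^\Gc$ in degree zero, so the total cohomology of $D^{\bullet,\bullet}$ is $H^{p+q}_{la}(\Gc, K)$, giving the claimed convergence.

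The main obstacle I expect is verifying that the $\Hc$-invariants $(I^q)^\Hc$ form a bona fide locally analytic $\Gc/\Hc$-module of coinduced type, and that the horizontal spectral sequence really degenerates at $E_1$. Both points reduce to showing that passing to $\Hc$-invariants in $\mathcal{O}^{la}(\Gc^{n+1}, K)$ yields locally analytic functions on $\Gc/\Hc$ valued in $\mathcal{O}^{la}(\Gc^n, K)$, which rests on the existence of local analytic sections of $\Gc \to \Gc/\Hc$ (available for closed Lie subgroups). Once this point is settled, commutativity of the two differentials and identification of the $E_2$-page are formal and parallel the discrete case.
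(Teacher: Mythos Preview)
Your proposal is correct and follows exactly the approach the paper indicates: the paper does not spell out a proof but simply remarks that, once Shapiro's Lemma (Lemma~\ref{shap}) is available in the locally analytic setting, the argument for discrete groups in \cite[Chap.~II.1]{neukirch} carries over verbatim. Your sketch supplies precisely the details of that adaptation, including the one genuinely nontrivial point (the need for local analytic sections of $\Gc \to \Gc/\Hc$ to identify $(I^q)^{\Hc}$ as a coinduced $\Gc/\Hc$-module), which the paper leaves implicit.
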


\begin{cor}[Injectivity] \label{corinj1}
Let $G$ be a formal group law over $R$ and let $\Gc(0)$ be the $m$-standard group to $G$ with Lie algebra $\g \otimes_R K$. Then the map \begin{align*} \Phi_s: H^n_{la}(\Gc(0), K) \rightarrow H^n(\g, K) \end{align*} of Theorem \ref{thmstd} is injective. 
\end{cor}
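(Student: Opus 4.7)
The plan is to exploit the Hochschild--Serre spectral sequence of Theorem \ref{ss} applied to the open normal subgroup $\Gc(h) \subset \Gc(0)$, and then let $h \to \infty$ to transfer the question onto the germ algebra at $e$, where the already-established Lemma \ref{limit} does the heavy lifting.

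First I would observe that by Lemma \ref{open} the subgroup $\Gc(h)$ is open, normal, and of finite index in $\Gc(0)$ for every $h \geq 1$. Writing $F_h := \Gc(0)/\Gc(h)$, this is a finite discrete $K$-Lie group, so its locally analytic cohomology with coefficients in any $K$-vector space coincides with its ordinary group cohomology and vanishes in positive degrees because $|F_h|$ is invertible in $K$. Consequently the spectral sequence of Theorem \ref{ss} degenerates on the $E_2$-page, yielding
\[
H^n_{la}(\Gc(0), K) \;\cong\; H^n_{la}(\Gc(h), K)^{F_h} \;\hookrightarrow\; H^n_{la}(\Gc(h), K)
\]
for every $h \geq 1$. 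In particular each restriction map is injective.

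Next I would pass to the direct limit in $h$. Since Lemma \ref{open} exhibits the $\Gc(h)$ as a neighbourhood basis of $e$, the germ algebra satisfies $\mathcal{O}^{la}(\Gc(0)^n)_e = \varinjlim_h \mathcal{O}^{la}(\Gc(h)^n, K)$, and because filtered colimits commute with the cohomology of cochain complexes, combining with the isomorphism $\Phi_\infty$ of Lemma \ref{limit} produces a factorization
\[
H^n_{la}(\Gc(0), K) \;\hookrightarrow\; \varinjlim_h H^n_{la}(\Gc(h), K) \;\cong\; H^n(\mathcal{O}^{la}(\Gc(0)^\bullet)_e, K) \;\xrightarrow{\,\Phi_\infty\,}\; H^n(\g, K).
\]
Since $\Phi_s$ and $\Phi_\infty$ are both defined by the continuous extension of the differentiation rule $f_1 \otimes \cdots \otimes f_n \mapsto df_1 \wedge \cdots \wedge df_n$, and the restriction and germ maps are induced by the tautological inclusions on the cobar level, this composition equals $\Phi_s$ by naturality. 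Injectivity then follows from injectivity of the first arrow combined with the isomorphism at the end.

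The main obstacle is really the collapse step of the spectral sequence: one has to check carefully that the locally analytic cohomology of the finite discrete quotient $F_h$ coincides with ordinary group cohomology and hence vanishes in positive degrees with $K$-vector space coefficients. Everything else is bookkeeping with filtered colimits and the previously established limit isomorphism of Lemma \ref{limit}. Surjectivity of $\Phi_s$ lies outside the scope of this corollary and will instead require the quantitative information supplied by the convergence lemma (Lemma \ref{convlemma}) together with Corollary \ref{cor1}.
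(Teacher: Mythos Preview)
Your argument is correct and follows essentially the same route as the paper's proof: use Lemma~\ref{open} and the Hochschild--Serre spectral sequence of Theorem~\ref{ss} to see that each restriction $H^n_{la}(\Gc(0),K)\hookrightarrow H^n_{la}(\Gc(h),K)$ is injective, pass to the filtered colimit, identify it with $H^n(\mathcal{O}^{la}(\Gc(0)^\bullet)_e,K)$, and invoke Lemma~\ref{limit}. Your explicit justification for the degeneration (vanishing of higher cohomology of the finite quotient $F_h$ with $K$-coefficients) and for the compatibility $\Phi_s=\Phi_\infty\circ(\text{germ map})$ are spelled out in slightly more detail than in the paper, but the strategy is identical.
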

\begin{proof} (Compare \cite[Cor. 4.3.4]{HKN}) Since all subgroups $\Gc(h)$ of $\Gc(0)$ are open, normal and of finite index (see Lemma \ref{open}), the spectral sequence of Theorem \ref{ss} degenerates to $$H^n_{la}(\Gc(0), K) \cong H^n_{la}(\Gc(h), K)^{\Gc(0)/\Gc(h)}.$$ Hence the restriction maps $$H^n_{la}(\Gc(0), K) \rightarrow H^n_{la}(\Gc(h), K)$$ are injective. As the system of open normal subgroups is filtered, this also implies that $$H^n_{la}(\Gc(0),K) \rightarrow \varinjlim_h H^n_{la}(\Gc(h), K)$$ is injective. We can therefore conclude injectivity of the map $\Phi_s$ from the injectivity of $\Phi_\infty$, since the cohomology functor commutes with the direct limit, i.e. in our case $\varinjlim_h H^n_{la}(\Gc(h)), K)=H^n(\OGe, K).$ \end{proof}

Note that the proof of injectivity uses that $K$ is locally compact, meaning that the proof can not be carried over to the case of the completion $\mathbb{C}_p$ of the algebraic closure $\overline{\mathbb{Q}}_p$ of $\Q_p$.

\begin{cor} \label{corinj2}
Let $G$ be a formal group law over $R$ and let $\Gc(h)$ be the $m$-standard group of level $h$ associated to $G$ with Lie algebra $\g$. Then the map \begin{align*} \Phi_s: H^n_{la}(\Gc(h), K) \rightarrow H^n(\g, K) \end{align*} of Theorem \ref{thmstd} is injective for all $h \geq 0$. 
\end{cor}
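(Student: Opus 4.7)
The plan is to reduce the case of general $h \geq 0$ to the case $h=0$ already handled in Corollary \ref{corinj1}, using the isomorphism of formal group laws provided by Proposition \ref{rel}.

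First, I would invoke Proposition \ref{rel} to produce an $m$-dimensional formal group law $G_h$ over $R$ isomorphic to $G$ such that the associated $m$-standard group (of level $0$) satisfies $\Gc_h(0) = \Gc(h)$ as $K$-Lie groups. Write $\g_h$ for the Lie algebra of $G_h$; the isomorphism $\alpha : G \to G_h$ of formal group laws in Proposition \ref{rel} induces an isomorphism of Lie algebras $\g \otimes_R K \cong \g_h \otimes_R K$ compatible with the identification of the underlying $K$-Lie groups.

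Next, I would observe that the map $\Phi_s$ is natural in the formal group law: the recipe $f_1 \otimes \cdots \otimes f_n \mapsto df_1 \wedge \cdots \wedge df_n$ is defined purely in terms of the locally analytic manifold and group structure of $\Gc(h) = \Gc_h(0)$, and differentiation at $e$ lands in the Lie algebra canonically attached to the $K$-Lie group. Hence the diagram
\begin{equation*}
\begin{array}{ccc}
H^n_{la}(\Gc(h), K) & \xrightarrow{\Phi_s} & H^n(\g \otimes_R K, K) \\
\big\| & & \big\downarrow \cong \\
H^n_{la}(\Gc_h(0), K) & \xrightarrow{\Phi_s} & H^n(\g_h \otimes_R K, K)
\end{array}
\end{equation*}
commutes, where the left vertical arrow is the identity coming from $\Gc(h) = \Gc_h(0)$ and the right vertical arrow is the isomorphism induced by $\alpha$.

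Finally, by Corollary \ref{corinj1} applied to the formal group law $G_h$, the bottom horizontal map is injective. The commutativity of the diagram together with the fact that the right vertical arrow is an isomorphism then forces the top horizontal map to be injective as well, proving the corollary.

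The argument is essentially a naturality check, so I do not expect a real obstacle; the only point requiring attention is that the explicit formula $f_1 \otimes \cdots \otimes f_n \mapsto df_1 \wedge \cdots \wedge df_n$ is intrinsic to the $K$-Lie group, which follows from Corollary \ref{phiexpl} and the canonical identification of $L(\Gc(h))$ with $\g \otimes_R K$ recorded after Definition \ref{std}.
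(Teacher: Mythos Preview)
Your proposal is correct and follows essentially the same approach as the paper: reduce to level $0$ via Proposition \ref{rel}, apply Corollary \ref{corinj1} to the formal group law $G_h$, and then use the isomorphism $\g_h \otimes_R K \cong \g \otimes_R K$ (equivalently $\Hom_{R}({\bigwedge}^n\g_h, R) \otimes_R K \cong \Hom_{R}({\bigwedge}^n\g, R) \otimes_R K$) to transport injectivity back. Your explicit commutative diagram and naturality discussion make the compatibility of the two $\Phi_s$ maps more transparent than the paper's terse phrasing, but the argument is the same.
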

\begin{proof} Proposition \ref{rel} tells us that there exists an isomorphic formal group law $G_h$ to $G$ such that $\Gc(h)=\Gc_h(0)$. By Corollary \ref{corinj1} we know that $$H^n_{la}(\Gc(h), K)=H^n_{la}(\Gc_h(0), K) \rightarrow H^n(\g_h, K)$$ is injective, where $\g_h$ is the Lie algebra associated to $G_h$. However, $$\Hom_{R}({\bigwedge}^n\g_h, R) \otimes_R K \cong \Hom_{R}({\bigwedge}^n\g, R) \otimes_R K$$ so that we get injectivity of the map $\Phi_s$.\end{proof}

\begin{proof}[Proof of the Comparison Theorem for standard groups \ref{thmstd}] 
Since we already know from Corollary \ref{corinj2} that $$\Phi_s: H^n_{la}(\Gc(h), K) \rightarrow H^n(\g, K)$$ is injective for $h \geq 0$ we now concentrate on surjectivity. Let $[c]$ be a cohomology class in $H^n(\g, K)$. Then by Theorem \ref{phi} $[c]$ is represented by a cocyle \hbox{$\tilde{c} \in \tilde{\mathcal{O}}(G)^{\hat{\otimes}n} \otimes_{R} K$}. By Lemma \ref{convlemma} we know that $$\tilde{\mathcal{O}}(G) \subset \mathcal{O}_c(\Gc(h)) \subset \mathcal{O}^{la}(\Gc(h)), \ \mbox{for} \ h > h_0=\frac{1}{p-1}.$$ Hence $\Phi_s$ is surjective for all $h > h_0=\frac{1}{p-1}$.
\end{proof}

\end{document}